\newtheorem{theorem}{Theorem}[section]
\newtheorem{utv*}{Proposition}
\newtheorem{hyp*}{Conjecture}
\newtheorem{lemma}[theorem]{Lemma}
\newtheorem*{th*}{Theorem}
\newcommand{\sha}[0]{\ensuremath{\mathbb{S}
}}
\def\la{\lambda}
\def\a{\alpha}
\def\R{\mathbb{R}}
\def\s{\sigma}
\def\Om{\Omega}
\def\C{\mathbb{C}}
\def\D{\mathbb{D}}
\def\cz{Calder\'on--Zygmund}
\def\eps{\varepsilon}
\def\f{\varphi}
\def\pd{\partial}
\def\al{\alpha}
\def\la{\langle}
\def\ra{\rangle}
\def\bP{\mathbb{P}}
\def\bE{\mathbb{E}}
\def\cF{\mathcal{F}}
\def\cL{\mathcal{L}}
\def\diff{\begin{bmatrix}dx\\dy\end{bmatrix}}
\begin{document}

\title{Bellman function technique in Harmonic Analysis}
\author{Alexander Volberg}
\address{Department of Mathematics, Michigan State University, East Lansing, MI 48824, USA}
\email{sashavolberg@yahoo.com}
\date{}
\maketitle
\begin{abstract}
It is strange but  fruitful to think about the functions as random processes. Any function can be viewed as a martingale (in many different ways) with discrete time. But
it can be useful to have continuous time too. Processes can emulate functions, expectation of profit functional on the solution of stochastic differential equation can emulate
the functional on usual familiar functions. The advantage is that now we have ``all" admissible functions ``enumerated" as solutions of one stochastic differential equation, and choosing the best function optimizing a given functional becomes a problem of choosing the right control process. But such problem has been long since solved in the part of mathematics called Stochastic Optimal Control.
So-called Bellman equation reduces an infinite dimensional problem of choosing the best control to a finite dimensional (but non-linear as a rule) PDE called Bellman equation. Its solution, called Bellman function of a given optimization problem, gives us a lot of information about optimum and optimizers. This method gave some interesting results in the classical Harmonic Analysis, having on the surface nothing to do with probability. Sometimes the results obtained by this method did not find ``classical" proofs so far. It is especially well-suited to estimates of singular integrals, probably because of the underlying
probabilistic structure of classical singular integrals.
\end{abstract}

\section{Quasiconformal maps: sharp distortion estimates and sharp regularity}

We deal first with Beltrami equation 
\begin{equation}
\label{Beltr1}
f_{\bar z} -\mu f_z = 0\,,
\end{equation}
with bounded function $\mu$ called {\it Beltrami coefficient}, for simplicity $\mu$ is compactly supported on $\C$, $f$ being analytic near $\infty$ (see \eqref{Beltr1}) supposed to have the following Laurent decomposition at infinity
\begin{equation}
\label{Laur}
f(z)= z+ c_0 + \frac{c_{-1}}{z}+\dots\,.
\end{equation}

If $\mu$ is smooth it is not difficult to see that the solution is smooth on the whole $\C$. But we are interested in just measurable bounded $\mu$:

\begin{equation}
\label{Beltr2}
\|\mu\|_{L^{\infty}(\C)} =k<1\,.
\end{equation}

Several natural questions appear:

1. What is the smoothness of $f$ depending on $\mu, k$?

2. What are distortion properties of $f$? How it distorts the area and other measures?

3. In what classes (Sobolev, say) we can solve \eqref{Beltr1} in such a way that it will be continuous $\hat{\C}\rightarrow \hat{\C}$, where $\hat{\C}= \C\cup \infty$?
As at infinity it is a perfect holomorphic map, this question concerns only finite part of $\C$, so it is local.

\bigskip

Denote $g:=f_{\bar z}$. It is a function with compact support. We can restore  $f$:
\begin{equation}
\label{Cauchy}
f(z)= \frac1{\pi}\int_{\C} \frac{1}{\zeta-z} g(\zeta)\, dm_2(\zeta)  +c_0 +z\,.
\end{equation}
We used \eqref{Laur} and we naturally assume integrability of $g$.

Then obviously
$$
f_z=\frac1{\pi} \int_{\C} \frac{1}{(\zeta-z)^2} g(\zeta)\, dm_2(\zeta) +1 \,,
$$
where the integral is singular and should be understood , e.g. in the sense of principal values.
This is an important operator called the Ahlfors--Beurling transform (AB transform):
$$
Tg := \frac1{\pi} \int_{\C} \frac{1}{(\zeta-z)^2} g(\zeta)\, dm_2(\zeta) \,.
$$
Then \eqref{Beltr1} automatically becomes

\begin{equation}
\label{Beltr3}
g -\mu Tg = (I-\mu\,T)g = h\,,
\end{equation}
where $h=\mu$ is bounded with compact support. So in particular $h\in \cap_{p\ge 1} L^p(\C)$.

It is easy to make Fourier analysis of convolution kernel $\frac{\pi}{z^2}$ of AB operator, and to see that it is the Fourier multiplier with symbol
$
\zeta/\bar{\zeta}\,.
$
Therefore, $\|T\|_{L^2(\C)}=1$ and having then  $\|\mu\,T\|_{L^2(\C)}\le k<1$, we can conclude that \eqref{Beltr3} has a solution in $L^2(\C)$ given by the usual Neumann  series:
\begin{equation}
\label{Neu}
g= h + \mu T h + \mu T\mu T h +\dots\,,.
\end{equation}

Notice that $g$ is compactly supported (as $\mu$ is). Restore $f$ by \eqref{Cauchy}. The boundedness of $T$ in $L^2$ implies now that
\begin{equation}
\label{W2}
f\in W^{2}_{1, loc}(\C)\,.
\end{equation}
Let us see now that $g$ given in \eqref{Neu} is actually better than in $L^2$. Operator $T$ has norm $1$ in $L^2$ and it has norm {\it close} to $1$ in $L^p$, $p>2, p\approx 2$.
In fact, it is an operator with \cz \, kernel, and as such it is bounded in all $L^p$. Interpolating between, say $L^2$ and $L^4$, we get
\begin{equation}
\label{normAB}
\|T\|_{L^p(\C)} = : n(p) \rightarrow 1, \, p\rightarrow 2\,.
\end{equation}

So we can find such a $p=p(k)=2+\eps(k), \eps(k)>0,$ that the series in \eqref{Neu} converges in this $L^{2+\eps(k)}$. So $g\in L^{2+\eps(k)}$. Again restore $f$ by formula \eqref{Cauchy} (it is the same $f$ of course), again use that $f_z = Tf_{\bar z} +1= Tg +1$, and that $T$ is bounded in all $L^p$ being a \cz\,  operator. We got that $f$ self-improves from \eqref{W2} to

\begin{equation}
\label{Wp}
f\in W^{p}_{1, loc}(\C)\,, \,\, p=2+\eps(k), \eps(k)>0\,.
\end{equation}
We formulate this small fact as a fundamental Ahlfors--Bers--Bojarski's theorem:
\begin{theorem}
\label{ABB}
Any solution of \eqref{Beltr1}  in $W^2_{1, loc}$ self-improves to being in $W^{2+\eps(k)}_{1,loc}$, $\eps(k)>0$. In particular, any such solution is continuous on $\C$ and even H\"older continuous.
There exists a solution, which is a homeomorphism of $\hat{\C}$ into itself.
\end{theorem}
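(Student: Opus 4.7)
My plan is to treat the three assertions in order, relying on the $L^p$ mapping properties of the Ahlfors--Beurling operator $T$ and the integral representation \eqref{Cauchy}.

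The self-improvement statement has essentially been sketched in the paragraphs preceding the theorem, so I would only codify that argument. Let $f \in W^{2}_{1,loc}$ solve \eqref{Beltr1} with the normalization \eqref{Laur}, and set $g:=f_{\bar z}$, which is compactly supported and lies in $L^2$. Then $f_z=1+Tg$ and the Beltrami equation takes the form $(I-\mu T)g=\mu$. Using \eqref{normAB}, pick $p=2+\eps(k)>2$ so small that $k\cdot n(p)<1$; then $\mu T$ is a strict contraction on $L^p$, the Neumann series \eqref{Neu} for $g$ converges in $L^p$, and since $\mu\in L^p$ this forces $g\in L^p$. Consequently both $f_{\bar z}=g$ and $f_z=1+Tg$ lie in $L^p_{loc}$, so $f\in W^{p}_{1,loc}$ as claimed.

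The continuity and H\"older continuity then follow at once from the Sobolev embedding $W^{1,p}_{loc}(\C)\hookrightarrow C^{0,1-2/p}_{loc}(\C)$ valid for $p>2$, combined with the normalization $f(z)=z+o(1)$ at infinity. This gives local H\"older continuity with exponent $\eps(k)/(2+\eps(k))$ and continuity on all of $\hat\C$.

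For the existence of a homeomorphic solution I would argue by approximation. Mollify and truncate $\mu$ to produce smooth compactly supported $\mu_n$ with $\|\mu_n\|_{\infty}\le k$ and $\mu_n\to\mu$ in every $L^q$, $q<\infty$. For smooth $\mu_n$ classical theory gives smooth solutions $f_n$ satisfying \eqref{Laur}; their Jacobians $(1-|\mu_n|^2)|(f_n)_z|^2$ are strictly positive wherever $(f_n)_z\ne 0$, and combined with the degree-one behaviour at infinity this makes each $f_n$ a diffeomorphism of $\hat\C$. Running the Neumann argument uniformly in $n$ gives a uniform $L^{2+\eps(k)}$ bound on $g_n:=(f_n)_{\bar z}$, hence a uniform H\"older bound on $\{f_n\}$ on compacta. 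Adding the uniform modulus of continuity for $\{f_n^{-1}\}$ coming from the quantitative distortion theorem for $K$-quasiconformal maps with $K=(1+k)/(1-k)$, Arzel\`a--Ascoli extracts a subsequence $f_n\to f$ locally uniformly with $f_n^{-1}\to f^{-1}$ uniformly, so the limit $f$ is a homeomorphism. Weak $L^{2+\eps(k)}$ convergence of the derivatives and strong $L^q$ convergence of $\mu_n$ let us pass to the limit in $(f_n)_{\bar z}-\mu_n(f_n)_z=0$ against test functions, showing that $f$ solves \eqref{Beltr1}.

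The main obstacle is the last step: the uniform control of the inverses $f_n^{-1}$ required to ensure that the limit is injective and not merely continuous. Without such a bound the limit could collapse a nontrivial set to a point. The standard remedy is the distortion theory of $K$-quasiconformal maps, which has to be invoked as an additional input here; if one wishes to stay inside the integral-equation framework, an alternative is to derive a quantitative area distortion inequality directly from the Neumann series and deduce equicontinuity of the inverses from it.
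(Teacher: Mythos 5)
Your argument for the $W^{2+\eps(k)}_{1,loc}$ self-improvement is the same as the paper's: set $g=f_{\bar z}$, rewrite the Beltrami equation as $(I-\mu T)g=\mu$, and use $\|T\|_{L^p}\to 1$ as $p\to 2$ (equation \eqref{normAB}) so that $\mu T$ remains a strict contraction on some $L^{2+\eps(k)}$; the Neumann series \eqref{Neu} then converges there and, by uniqueness of the $L^2$ solution, coincides with $g$. The paper leaves the Morrey--Sobolev embedding step and the construction of a homeomorphic solution implicit, and your mollification-plus-Arzel\`a--Ascoli argument, which controls the inverses via the quantitative distortion theory of $K$-quasiconformal diffeomorphisms, is a standard and correct way to supply both.
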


New questions appear:

4. What is the largest $2+\eps(k)$?

5. What is $n(p)$ in \eqref{normAB}?

To this we want to add some more questions. Introduce a constant
$$
K= \frac{1+k}{1-k}\in [1, \infty);\,\, k=\frac{K-1}{K+1}\,.
$$
It has a geometric meaning: it gives the maximal ratio of the axis of infinitesimal ellipses obtained as the images of infinitesimal circles by all possible solutions of \eqref{Beltr1}.

\noindent{\bf Definition.} Any solution of \eqref{Beltr1} from Theorem \ref{ABB} is called K-quasiregular map. Any homeomorphic solution is called K-quasiconformal map or  K-quasiconformal homeomorphism.  It is basically unique because by normalization at infinity it can be only shifted.

Again questions:

6. What is the sharp distortion of K-quasiconformal maps? Namely, if $f(\D)=\D$, where $\D$ denotes the unit disc and $f(0)=0$, then what is the {\it sharp} (largest) exponent in 
\begin{equation}
\label{dist1}
\forall E\subset \D\,,\,\,|f(E)| \le C_K |E|^{e(K)}\,?
\end{equation}

Without normalizations, allowing $f$ to be any K-quasiconformal map this becomes the question what is the best (largest) exponent in
\begin{equation}
\label{dist2}
\forall E\subset B\,,\,\,\frac{|f(E)|}{|f(B)|} \le C_K\bigg(\frac{ |E|}{|B|}\bigg)^{e(K)}\,?
\end{equation}

Function
$$
f_0(z) := z|z|^{\frac1K-1} \,, |z|\le 1,\,\,\text{and}\,\, = z\,,\text{for}\,\, |z|>1
$$
shows that $e(K) \le \frac1K$. 

\bigskip

\noindent{\bf Gehring's problem:} $e(K)=\frac1K$. It is equivalent to saying (we will se that) in Question 4 the sharp exponent of Sobolev integrability  is $2+\eps(k)=1+\frac1k -$.
This is very tough, but it was done by Astala \cite{A94}.

\bigskip

Glance now at \eqref{Neu}:  it gives that if we want to show that the exponent $p$ of Sobolev integrability  goes up to $1+\frac1k -$, it is enough to prove that
$$
\|T\|_{L^{1+1/k}} = 1/k\,,
$$
 in other words that
\begin{equation}
\label{pminus1}
\|T\|_{L^p} = p-1, p>2\,.
\end{equation}
This is very open, we will show how Bellman function gives partial results.

\bigskip

\noindent{\bf Big Iwaniec's problem or $p-1$-problem:} $n(p) = \max (p-1, \frac{p}{p-1}-1)$. 

\bigskip

Yet another question naturally arises: in Theorem \ref{ABB} we started with a priori solution in $W^2_{1, loc}$. How much {\it below} this we can start to have the same self-improvement?

7. Let $f\in W^q_{1, loc}$ solves \eqref{Beltr1}, and $q\in (1,2)$. What is the smallest $q=q(k)$ such that we still have for each such $f$ self-improvement to $W^2_{1, loc}$? (And then automatically to $W^{2+\eps(k)}_{1, loc}$  by Theorem \ref{ABB}, and then up to $W^{1+1/k -}_{1, loc}$ by Astala's \cite{A94}?)

\bigskip

\noindent{\bf Iwaniec's problem:} $q(K)=1+k$. 

\bigskip

We will prove it here using Bellman function technique and Astala's sharp distortion result \cite{A94}. We follow the exposition of \cite{AIS01}.

\subsection{Invertibility of Beltrami operator}
\label{inv}

If Big Iwaniec's problem were solved than we would immediately get

\begin{equation}
\label{inv1}
\text{If}\,\, p\in [2, 1+1/k), \, \, \text{then} \,\, \|(I-\mu\,T)^{-1}\|_{L^p} \le \frac{C(k)}{1+\frac1k-p}\,.
\end{equation}
(Actually even with $C(k)=1/k$, but this we do not care about as $k$ is fixed and we vary $p$.)

By duality and small talk one would get

\begin{equation}
\label{inv2}
\text{If}\,\, p\in I_k:=(1+k, 1+1/k), \, \, \text{then} \,\, \|(I-\mu\,T)^{-1}\|_{L^p} \le \frac{C(k)}{dist (p, \R\setminus I_k)}\,.
\end{equation}

Big Iwaniec's conjecture is still a conjecture, but this is  a Theorem of Petermichl--Volberg \cite{PV}, which we start to prove now. It will use Bellman function technique. Notice that now there exists an even more precise version of this result, namely, see \cite{AIPS}.

\begin{theorem}
\label{PV}
$\text{If}\,\, p\in I_k:=(1+k, 1+1/k), \, \, \text{then} \,\, \|(I-\mu\,T)^{-1}\|_{L^p} \le \frac{C(k)}{dist (p, \R\setminus I_k)}\,.$
\end{theorem}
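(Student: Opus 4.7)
The plan is to realize the resolvent as a Neumann series $(I-\mu T)^{-1}=\sum_{n\ge 0}(\mu T)^n$ and to show that this series converges in $L^p(\C)$ with geometric ratio $\rho=\rho(k,p)<1$ whose defect $1-\rho$ is of order $dist(p,\partial I_k)$; then $(1-\rho)^{-1}$ gives exactly the claimed bound. The left half $p\in(1+k,2]$ will be reduced to the right half $p'\in[2,1+1/k)$ by duality, using that the $L^2$ adjoint of $I-\mu T$ has the same form with $T$ replaced by its $L^2$-adjoint (another AB-type \cz operator) and $\mu$ replaced by $\bar\mu$ (same $L^\infty$ norm), so it suffices to treat $p\in[2,1+1/k)$.

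\textbf{Why bare norms are not enough.} The naive estimate $\|\mu T\|_{L^p}\le k\cdot n(p)$ together with Big Iwaniec's conjecture $n(p)=p-1$ would yield $\rho=k(p-1)<1$ on $I_k$ with precisely the right endpoint behavior, but Big Iwaniec is open. So one cannot afford to separate $\mu$ and $T$ and must instead exploit the joint structure of $\mu T$, which is what the Bellman function is designed to capture. The relevant joint structure is the quasiconformal cone $|f_{\bar z}|\le k|f_z|$ that is satisfied when $g=f_{\bar z}$ and $Tg+1=f_z$.

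\textbf{The Bellman function.} I would look for a scalar function $B=B_{p,k}(X)$ on a convex domain $\Omega\subset\R^N$, where the coordinates of $X$ package pointwise invariants of $g$, $Tg$, $h=(I-\mu T)g$ and $\mu$, with the following three properties: (i) the obstacle $B(X)\ge |g|^p$ on the relevant slice of $\Omega$; (ii) the size bound $B(X)\le C(k)\,dist(p,\partial I_k)^{-1}(|g|^p+|h|^p)$; and (iii) a Hessian condition $-d^2 B\ge \delta(p,k)\,Q(dX)$, where $Q$ is the quadratic form dual to a heat-extension / Littlewood--Paley realization of $T$. Such a $B$ is typically produced by an ansatz that reduces a Monge--Amp\`ere-type PDE on $\Omega$ to an ODE, with the ansatz chosen to respect the quasiconformal cone. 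To turn $B$ into the desired integral inequality I would harmonically extend $(g,Tg,h,\mu)$ to the upper half-space (or pass to a martingale shift model in the spirit of Petermichl--Nazarov--Volberg) and apply Green's identity to $B$ along the extension; condition (iii) then pays for a telescoping bound $\int|g|^p\le C(k)\,dist(p,\partial I_k)^{-1}\int|h|^p$, which is the theorem.

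\textbf{Main obstacle.} The core difficulty is calibrating the blow-up of $B$ at the endpoints: a cruder choice of $\Omega$ and $B$ would give only a subinterval of $I_k$, or a blow-up rate weaker than $dist(p,\partial I_k)^{-1}$. Encoding the quasiconformal cone $|f_{\bar z}|\le k|f_z|$ into $\Omega$ so that the Hessian condition (iii) degenerates exactly at $p=1+k$ and $p=1+1/k$, and nowhere strictly inside $I_k$, is the technical heart of \cite{PV} and requires explicit ODE analysis of the candidate $B$; once that analysis goes through, the remaining steps are standard Littlewood--Paley and measure-theoretic bookkeeping.
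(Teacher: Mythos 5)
Your proposal misses the two ingredients that actually carry the proof, and the program you sketch in their place is not shown to work.

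The paper does \emph{not} estimate $\|(I-\mu T)^{-1}\|_{L^p}$ by a Neumann series, nor by a Bellman function built directly on $(g,Tg,h,\mu)$. Instead it makes a quasiconformal change of variables: it takes $\phi = $ Cauchy transform of $g$, writes $\phi_{\bar z}-\mu\phi_z = h$, picks a $K$-qc homeomorphism $f$ solving $f_{\bar z}-\mu f_z=0$, and sets $u=\phi\circ f^{-1}$. A short computation shows $(1-|\mu|^2)(u_{\bar z}\circ f)\bar f_z = h$, so after changing variables one is left with the estimate $\int|T(u_{\bar z})|^p\,W \le F([W]_{A_2})\int|u_{\bar z}|^p\,W$ for the weight $W=(J_{f^{-1}})^{1-p/2}$. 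The problem is thus converted into a sharp weighted bound for the Ahlfors--Beurling operator. The Bellman function (Theorem \ref{1.3}) produces exactly that weighted bound, $\|T\|_{L^2(W)\to L^2(W)}\le C\,[W]_{A_2}$, with extrapolation giving $L^p$. Crucially, the endpoint rate $dist(p,\R\setminus I_k)^{-1}$ does not come from the Bellman function at all: it comes from Astala's sharp area distortion theorem, which in Lemma \ref{a2norm} yields $[W]_{A_2}\le p^2C(K)/(1+\tfrac1k-p)$. Your write-up never mentions the change of variables, never mentions Astala, and therefore has no source for the blow-up rate.

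By contrast, what you propose --- a Bellman function $B_{p,k}$ on invariants of $(g,Tg,h,\mu)$, with domain encoding the cone $|f_{\bar z}|\le k|f_z|$, whose Hessian degenerates exactly at $p=1+k$ and $p=1+1/k$ --- is not known to exist, and you yourself defer its construction (``the technical heart,'' ``explicit ODE analysis'') without any evidence it goes through. Worse, if such a $B$ did exist and directly yielded $\|g\|_p\le C(k)\,dist(p,\partial I_k)^{-1}\|h\|_p$ by Littlewood--Paley and Green's identity alone, you would have an argument that bypasses Astala's theorem entirely, which would be a major advance rather than a reconstruction of the known proof; this should make you suspicious that the sketch is missing something essential. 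Also, your opening line (Neumann series with geometric ratio $\rho=\rho(k,p)<1$ and $1-\rho\sim dist(p,\partial I_k)$) is inconsistent with your own subsequent observation that the only candidate for $\rho$, namely $k\,n(p)$, cannot be controlled since Big Iwaniec is open --- so the stated ``strategy'' is not a strategy.

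To repair the proposal: (1) reduce $p\in(1+k,2]$ to $p'\in[2,1+1/k)$ by duality, as you do; (2) pre/post-compose with a $K$-qc homeomorphism $f$ to reduce to a weighted bound for $T$ against $W=(J_{f^{-1}})^{1-p/2}$; (3) control $[W]_{A_2}$ via Astala's area distortion, which produces the $dist(p,\partial I_k)^{-1}$ rate; (4) prove $\|T\|_{L^2(W)}\le C[W]_{A_2}$ by the Bellman/heat-extension argument and extrapolate to $L^p$.
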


Let $f$ be  a K-quasiconformal homeomorphism, let $p\in [2, 1+1/k)$, where $k=\frac{K-1}{K+1}$. Denote $J_f = |f_z|^2 -|f_{\bar z}|^2$, the Jacobian of $f$.
We need lemma:

\begin{lemma}
\label{a2norm}
Let $f,\,p$ be as above, denote $w:= J_f^{1-p/2}$.
Then $w\in A_2$ with
$$
[w]_{A_2} \le \frac{p^2C(K)}{1+\frac1k -p}\,.
$$
\end{lemma}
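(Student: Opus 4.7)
The plan is to compute the two halves of the $A_2$ product separately, exploiting the symmetry between $f$ and $f^{-1}$ (both are $K$-quasiconformal) and then invoking Astala's sharp distortion theorem quantitatively. Set $\alpha:=p/2-1\in[0,(1-k)/(2k))$, so $w=J_f^{-\alpha}$ and $w^{-1}=J_f^{\alpha}$. I transform the $w$-average via $\zeta=f(z)$, $d\zeta=J_f(z)\,dz$, $J_{f^{-1}}(f(z))=1/J_f(z)$:
\[\int_B J_f^{-\alpha}(z)\,dz \;=\; \int_{f(B)} J_{f^{-1}}^{1+\alpha}(\zeta)\,d\zeta \;=\; \int_{f(B)} J_{f^{-1}}^{p/2}(\zeta)\,d\zeta,\]
while $\int_B w^{-1}\,dz=\int_B J_f^{p/2-1}\,dz$ is left as is. Both integrands are now positive powers of Jacobians, which are exactly the quantities controlled by Astala.

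Next I convert Astala's geometric inequality $|f(E)|\le C_K|f(B)|(|E|/|B|)^{1/K}$ into a quantitative local integrability estimate for $J_f$ (and, by the same argument applied to $f^{-1}$, for $J_{f^{-1}}$). Taking $E=\{J_f>t\}\cap B$ and using $t|E|\le\int_E J_f$ yields the weak bound $|\{J_f>t\}\cap B|\le C(K)|B|(|f(B)|/|B|)^{p^{\ast}}t^{-p^{\ast}}$, with $p^{\ast}:=K/(K-1)=(1+1/k)/2$. A standard layer-cake integration then gives, for every $1\le q<p^{\ast}$,
\[\frac{1}{|B|}\int_B J_f^q\,dz \;\le\; \frac{C(K)^q\,p^{\ast}}{p^{\ast}-q}\Bigl(\frac{|f(B)|}{|B|}\Bigr)^{q},\]
and the analogous estimate for $J_{f^{-1}}$ on $f(B)$.

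Applying this with $q=p/2$ (permissible precisely because $p<1+1/k$) to the transformed integral, and with $q=\alpha$ (sub-critical; use Jensen if $\alpha<1$) to $\int_B J_f^\alpha$, I obtain
\[\frac{1}{|B|}\int_B w\,dz \;\le\; \frac{C(K)\,p^{\ast}}{p^{\ast}-p/2}\Bigl(\frac{|B|}{|f(B)|}\Bigr)^{\alpha},\qquad \frac{1}{|B|}\int_B w^{-1}\,dz \;\le\; C(K)\Bigl(\frac{|f(B)|}{|B|}\Bigr)^{\alpha}.\]
Multiplying, the geometric ratios $|f(B)|/|B|$ cancel. Using $p^{\ast}-p/2=(1+1/k-p)/2$ and the fact that $p\le 1+1/k$ is bounded in terms of $K$, the product is at most $p^2 C(K)/(1+1/k-p)$, as claimed.

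The essential input is \emph{Astala's sharp distortion theorem}: without the sharp exponent $1/K$, the Jacobian $J_f$ would fail to lie in $L^{p/2}_{loc}$ for $p/2$ approaching the critical value $(1+1/k)/2$, and no such $A_2$ bound could hold near that endpoint. Everything else --- the change of variables, the passage from Astala's geometric inequality to a quantitative Muckenhoupt bound via layer-cake, and the cancellation of Jacobian averages in the $A_2$ product --- is essentially mechanical; the factor $p^2$ in the stated bound merely records the two $q$-factors arising when one collects the layer-cake constants.
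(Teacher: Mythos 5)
Your proof is correct and follows essentially the same route as the paper. The paper's argument also applies Astala's $|f(E)|\le C_K|f(B)|\bigl(|E|/|B|\bigr)^{1/K}$ to the sublevel sets of the (comparable quantity) $|f_z|^2+|f_{\bar z}|^2$, does the same layer-cake computation to get the quantitative local $L^q$ bound, makes the same change of variables $\zeta=f(z)$ to express the $w$-average as $\int_{f(B)}J_{f^{-1}}^{p/2}$, bounds the $w^{-1}$-average as $\int_B J_f^{p/2-1}$, and observes that the ratios $|f(B)|/|B|$ cancel on multiplication; your reformulation in terms of $J_f^q$ rather than $(|f_z|+|f_{\bar z}|)^p$, and your use of Jensen when $\alpha=p/2-1<1$ in place of the paper's $\max(C',p-2)$, are cosmetic. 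One small point worth making explicit, as the paper does: when you apply the layer-cake estimate for $J_{f^{-1}}$ on $f(B)$, the domain is a $K$-quasidisc rather than a disc, and one needs (and has) the remark that quasidiscs are discs up to constants depending only on $K$.
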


\noindent{\bf Remark.}
We will give the proof following \cite{AIS01}. There is another very interesting proof in \cite{AIPS}.

\begin{proof}
Notice first that  
\begin{equation}
\label{sravn}
(1-k^2) |f_z|^2\le J_f =|f_z|^2-|f_{\bar z}|^2 \le |f_z|^2 \le |f_z|^2 + |f_{\bar z}|^2\,.
\end{equation}
That is all this quantities are comparable with $C=C(K)$. The next step is to show that there is $C(K)$ such that if $B\subset \C$ 
is a disc and if $f$ is a K-quasiconformal homeomorphism of $\C$, then
\begin{equation}
\label{above1}
\frac1{|B|}\int_B (|f_z|+|f_{\bar z}|) ^p \le \frac{pC(K)}{1+\frac1k - p} \bigg(\frac{|f(B)|}{|B|}\bigg)^{\frac{p}2}\,.
\end{equation}

Using linear maps to pre-compose and to post-compose with $f$ we reduce it to normalized case $|f(B)|=|B|=1$. Apply
\eqref{dist2} proved by Astala in \cite{A94} to the set
$$
E_t=\{z\in B: |f_z|^2 +|f_{\bar z}|^2 \ge t\}\,,\, t>0
$$ 
we get
$$
|E_t| \le \frac1t \int_{E_t} ( |f_z|^2 +|f_{\bar z}|^2) dm_2 \le K \frac1t \int_{E_t} ( |f_z|^2 -|f_{\bar z}|^2) dm_2 =
$$
$$
K\frac1t |f(E_t)| \le C_1(K) \frac1t |E_t|^{\frac1K}\,.
$$
Therefore,
$$
|E_t|\le \min ( 1, C_2(K) \frac1{t^{\frac{K}{K-1}}} )\,.
$$
This is the same as
$$
|\{z\in B:  |f_z| +|f_{\bar z}| \ge t\}\le \min ( 1, C_3(K) \frac1{t^{\frac{2K}{K-1}}} )\,.
$$
Distribution function calculation now shows

\begin{equation}
\label{above2}
\int_B (|f_z|+|f_{\bar z}|) ^p \le C' + C''p \int_1^{\infty} \frac{t^{p-1}}{t^{\frac{2K}{K-1}} }= C' +C''p\int_1^{\infty} \frac{1}{t^{2+\frac1k-p } }\,dt \le C' +C''p\frac{1}{1+\frac1k-p}\,,
\end{equation}
as $\frac{2K}{K-1}= 1+\frac1k$. This proves \eqref{above1}.

\bigskip

Now we are ready to prove Lemma \ref{a2norm}. Notice that $w= J_f^{1-p/2} = (J_{f^{-1}}\circ f )^{p/2-1}$. Then
$$
\frac1{|B|}\int w\,dm_2 = \frac1{|B|}\int_B (J_{f^{-1}}\circ f )^{p/2-1}(z)\, dm_2(z) = \frac1{|B|}\int_{f(B)} J_{f^{-1}}^{p/2}(\zeta)\frac{J_{f^{-1}}(\zeta)}{J_{f^{-1}}(\zeta)} \, dm_2(\zeta)\,,
$$
where we made the change of variable $z= f^{-1}(\zeta)$. We continue
$$
\frac1{|B|}\int_B w\,dm_2 =\frac{|f(B)|}{|B|}\frac1{|f(B)|}\int_{f(B)} J_{f^{-1}}^{p/2}(\zeta) \, dm_2(\zeta) \le \frac{pC(K)}{1+\frac1k -p} \bigg(\frac{|B|}{|f(B)|}\bigg)^{p/2} \frac{|f(B)|}{|B|}\,.
$$
So
\begin{equation}
\label{w}
\frac1{|B|}\int_B w\,dm_2 \le \frac{pC(K)}{1+\frac1k -p} \bigg(\frac{|B|}{|f(B)|}\bigg)^{p/2-1} \,.
\end{equation}
We used here \eqref{above1} with K-{\bf quasidisc} $f(B)$ instead of a disc $B$. But this does not matter as any $K-quasidisc$ (:= the image of a disc by K-quasiconformal map) is an almost disc with constants depending only on $K$.

Now notice that we assumed $p\ge 2$, so if $p_n:=p-2$ we can write
$$
\frac1{|B|}\int_B w^{-1}\,dm_2 = \frac1{|B|}\int_B (J_{f} )^{p/2-1}(z)\, dm_2(z) = \frac1{|B|}\int_{B} J_{f}^{p_n/2}(z)\, dm_2(z)\,,
$$
and we use again \eqref{above1} with $p_n$ replacing $p$, gives us:
\begin{equation}
\label{wminus1}
\frac1{|B|}\int_B w^{-1}\,dm_2 \le \frac{p_nC(K)}{1+\frac1k -p_n} \bigg(\frac{|f(B)|}{|B|}\bigg)^{p_n/2} =\frac{\max (C', p-2)C(K)}{3+\frac1k -p} \bigg(\frac{|f(B)|}{|B|}\bigg)^{p/2-1} \,.
\end{equation}
\end{proof}
Multiplying \eqref{w} and \eqref{wminus1} we get Lemma \ref{a2norm}:
\begin{equation}
\label{a2}
w= J_f ^{1-\frac{p}2}, \, p\in [2, 1+\frac1k) \Rightarrow [w]_{A_2} \le \frac{p^2 C(K)}{1+\frac1k -p}\,.
\end{equation}

Now we can reap a first consequence:

\begin{theorem}
\label{invth}
Suppose  $\|\mu\|_{\infty}=k <1$, let $p\in I_k:=(1+k, 1+\frac1k)$. Then operators $I-\mu T, I-T\mu$ are boundedly invertible in $L^p$.
\end{theorem}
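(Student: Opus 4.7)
The plan is to convert Lemma \ref{a2norm} into bounded invertibility of $I - \mu T$ on $L^p$ via the Stoilow factorization for the inhomogeneous Beltrami equation, together with the weighted--norm inequality for the Ahlfors--Beurling transform; this is the route of \cite{AIS01}. Since $\|\mu T\|_{L^2 \to L^2} \le k < 1$, the operator $I - \mu T$ is invertible on $L^2$ by Neumann series, so for $h \in L^p \cap L^2$ of compact support there is a unique $g \in L^2$ with $(I - \mu T) g = h$. I handle $p \in [2, 1+1/k)$ by establishing the a priori bound $\|g\|_p \le C(k,p)\,\|h\|_p$ and then pass to $p \in (1+k, 2]$ by duality.

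Let $\phi$ be the principal K-quasiconformal map with Beltrami coefficient $\mu$. Introducing the Cauchy transform $\omega$ of $g$ one has $\omega_{\bar z} = g$ and $\omega_z = Tg$, so the equation becomes the inhomogeneous Beltrami equation $\omega_{\bar z} - \mu\,\omega_z = h$. Writing $\omega = u \circ \phi$ and applying the Wirtinger chain rule with $\phi_{\bar z} = \mu \phi_z$ gives
\[
\omega_{\bar z} - \mu\,\omega_z \;=\; (1 - |\mu|^2)\,\overline{\phi_z}\,(u_{\bar\zeta}\circ\phi),
\]
which determines $u_{\bar \zeta}$ explicitly in terms of $h \circ \phi^{-1}$ and $\phi$, and then $u_\zeta = T u_{\bar\zeta}$. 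The resulting formula $g = \mu(u_\zeta \circ \phi)\phi_z + (u_{\bar\zeta}\circ\phi)\overline{\phi_z}$, combined with the change of variable $\zeta = \phi(z)$ and the identity $J_\phi = (1-|\mu|^2)|\phi_z|^2$, shows that $\|g\|_p^p$ is controlled by the weighted norm $\int (|u_\zeta|^p + |u_{\bar\zeta}|^p)\,\tilde w^{-1}\,dm$ with $\tilde w^{-1} = J_{\phi^{-1}}^{p/2 - 1}$, while undoing the change of variable on the $u_{\bar\zeta}$-side yields $\|u_{\bar\zeta}\|_{L^p(\tilde w^{-1})} \lesssim_k \|h\|_p$. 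Lemma \ref{a2norm} applied to the K-quasiconformal map $\phi^{-1}$ (for which the same exponent range is available) gives $J_{\phi^{-1}}^{1-p/2} \in A_2$, hence $\tilde w^{-1} \in A_2 \subset A_p$ with controlled norm, and the weighted \cz\ estimate for $T$ closes the bound.

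For $p \in (1+k, 2]$ one has $p' \in [2, 1+1/k)$; the $L^2$-adjoint of $I - \mu T$ is $I - T^{*}\bar\mu$, where $T^{*}$ is the conjugate Beurling transform---a \cz\ operator in the same class with the same $L^2$-norm. By the previous step applied with $\bar\mu$ and $T^{*}$ in place of $\mu$ and $T$, this adjoint is invertible on $L^{p'}$, hence $I - \mu T$ is invertible on $L^p$. The operator $I - T\mu$ is handled symmetrically: its $L^2$-adjoint $I - \bar\mu T^{*}$ has the same structure $I - \nu S$ with $\|\nu\|_\infty = k$ and $S$ a Beurling-class operator, so the same scheme applies. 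The main obstacle is the change-of-variable step: because $\phi$ is only K-quasiconformal and not smooth, one cannot pull back $T$ naively, and the whole argument hinges on the fact that after the $\phi^{-1}$-pullback the correct weight $J_{\phi^{-1}}^{p/2-1}$ lies in $A_2$ with a quantitative bound---which is precisely Lemma \ref{a2norm}.
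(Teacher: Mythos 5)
Your proposal follows the paper's route almost line for line: Cauchy transform to the inhomogeneous Beltrami equation, Stoilow-type factorization through the $K$-quasiconformal homeomorphism, reduction to a weighted $L^p$ estimate for $T$ with weight $J_{\phi^{-1}}^{1-p/2}$, and Lemma \ref{a2norm} applied to $\phi^{-1}$ to bound its $A_2$ characteristic. The one slip is a sign in the exponent (the weight coming out of the change of variable is $J_{\phi^{-1}}^{1-p/2}$, not $J_{\phi^{-1}}^{p/2-1}$), which is harmless because $A_2$ is closed under taking reciprocals---an identification you implicitly use anyway; the remaining organizational differences (duality in $p$ rather than the paper's direct $A_p$ check via $W^{-1/(p-1)}$, and handling $I-T\mu$ through its adjoint rather than the similarity $I-T\mu = T(I-\mu T)T^{-1}$) are immaterial.
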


\begin{proof}
We can work with $I-\mu T$ as $I-T\mu = T(I-\mu T) T^{-1}$ and $T$ is boundedly invertible in each $L^q, 1< q< \infty$ ($T^{-1}$ is again a Fourier multiplier of \cz\, \,type).

Suppose we know how to prove the estimate from below
\begin{equation}
\label{frbel}
\|(I-\mu T)g\|_p \ge c(p,k) \|g\|_p, \forall g\in L^p(\C), \, p\in I_k\,.
\end{equation}

Then we would now exactly the same for $I-\mu T$, and, so, for the adjoint operator $(I-\mu\,T)^*$. Then $I-\mu T$ would have dense images in all $L^p$ we consider. Joining this with the estimate from below \eqref{frbel} we would conclude that $I-\mu T$ are invertible in in all $L^p$, $p\in I_k$.

\bigskip

So it is enough to have \eqref{frbel}. And we would like a good estimate for $c(K,p)$ in it.

\medskip

It is enough to prove \eqref{frbel} for the dense set of functions
$$
g\in C_0^{\infty}(\C), \, \, \int_{\C} g\, dm_2 =0\,.
$$
Let $\phi$ be the Cauchy transform of $g$: $\phi=\frac1{\pi}\int\frac{g(\zeta)}{\zeta-z}\,dm_2(\zeta)$.

Denoting $h:= g-\mu Tg $ we come to equation
$$
\phi_{\bar z}-\mu \phi_z =h\,,\text{in which we want to estimate}\,\, \|\phi_{\bar z}\|_p \le  C(K,p)\|h\|_p\,\, \text{if}\,\,p\in I_k\,.
$$

By Theorem \ref{ABB} there is a $K$-qc homeomorphism $f$
satisfying $f_{\bar z}-\mu f_z=0$.  Set
$$
u=\phi\circ f^{-1}\,,
$$
and let us see how equation $\phi_{\bar z}-\mu \phi_z =h$ will be transformed by this change of variable.

We calculate
$$
\phi_{\bar z }= (u_z\circ f) f_{\bar z} + (u_{\bar z}\circ f) \bar f_z\,,
$$
$$
\phi_{ z }= (u_z\circ f) f_{ z} + (u_{\bar z}\circ f) \bar f_{\bar z}\,,
$$
$$
\phi_{\bar z} -\mu \phi_{ z }-h= (u_z\circ f) f_{\bar z} + (u_{\bar z}\circ f) \bar f_z -\mu ((u_z\circ f) f_{ z} + (u_{\bar z}\circ f) \bar f_{\bar z})-h=
$$
$$
 (u_{\bar z}\circ f) \bar f_z -\mu  (u_{\bar z}\circ f) \bar \mu \bar f_z -h = (1-|\mu|^2) (u_{\bar z}\circ f)\,  \bar f_z-h =0\,.
 $$
 Hence obviously
 $$
 \int |u_{\bar z}\circ f)|^p| f_z|^p \le C(K)\int |h|^p\Rightarrow  \int |u_{\bar z}\circ f)|^p\, J_f^{p/2-1}J_f \le C(K)\int |h|^p
 $$
 And changing variable we get
 \begin{equation}
 \label{ubarz}
 \int |u_{\bar z}|^p| (J_{f^{-1}})^{1-p/2}= \int |u_{\bar z}|^p| (J_f\circ f^{-1})^{p/2-1}\le C(K)\int |h|^p
 \end{equation}
On the other hand, 
  $$
 \int |u_{ z}\circ f)|^p| f_{\bar z}|^p \le \frac{k^2}{1-k^2}  \int |u_{ z}\circ f)|^p| J_f^{p/2-1}J_f = \frac{k^2}{1-k^2}\int |u_z|^p (J_{f^{-1}})^{1-p/2}
 $$
Denote by $W:= (J_{f^{-1}})^{1-p/2}$. It is the one in Lemma \ref{a2norm}, only $f$ replaced by $f^{-1}$, which is a $K$-qc homeomorphism as well.

But the last expression above can be written as
$$
\int |u_z|^p (J_{f^{-1}})^{1-p/2}=\int |T(u_{\bar z})|^p (J_{f^{-1}})^{1-p/2} = \int |T(u_{\bar z})|^p  W=: TU
$$
(in fact, $u$ can be restored from $u_{\bar z}$ by Cauchy integral with no addition because  $u$ vanishes at infinity; then $u_z=T(u_{\bar z}$).
Notice that \eqref{ubarz} says that
\begin{equation}
\label{ubarzW}
 \int |u_{\bar z}|^p W\le C(K) |h|^p\,.
 \end{equation}
 
Combine \eqref{ubarzW} with (here $F$ is some ``unknown" function on $[1,\infty)$, but finite for all finite arguments)
$$
TU =   \int |T(u_{\bar z})|^p  W= F([w]_{A_2})  \int |u_{\bar z}|^p  W
$$
to get
\begin{equation}
\label{phibarz}
\|\phi_{\bar z}\|_p \le C(  \int |u_{\bar z}|^p W +  \int |T(u_{\bar z})|^p  W) \le  C(K)( 1+F([w]_{A_2}) \|h\|_p^p\,.
\end{equation}
 
\bigskip

Noticing that Lemma \ref{a2norm} gives the estimate $[w]_{A_2} \le \frac{p^2C(K)}{1+\frac1k -p}$, we conclude finally that
$$
\|g\|_p \le C(K) F( \frac{p^2C(K)}{1+\frac1k -p}) \|h\|_p\,,\,\text{if}\,\, p\in [2, 1+\frac1k)\,.
$$
We need the same estimate now for $1+k <p \le 2$. We need $W\in A_p$ (now $p\le 2$. But it is the same as to say that $W^{-1/(p-1)}\in A_{p'}, p'=p/(p-1)$. In our case  $W:= (J_{f^{-1}})^{1-p/2}$, so $W^{-1/(p-1)}$ will be
$(J_{f^{-1}}) ^{\frac{p'}2-1}$, which is inverse to the one in Lemma \ref{a2norm}, so also in $A_2\subset A_{p'}$. We get for  the whole interval of $p$'s: $p\in I_k= (1+k, 1+\frac1k)$ also
\begin{equation}
\label{F}
\|g\|_p \le F( \max(\frac{p^2C(K)}{1+\frac1k -p}, \frac{p^2C(K)}{1+\frac1k -p'})) \|h\|_p=F( \frac{p^2C(K)}{dist (p, \R\setminus I_k)}) \|h\|_p\,.
\end{equation}

Theorem \ref{inv} is proved.

\end{proof}

In \cite{AIS01} the following conjecture was formulated that claims that function $F$ in \eqref{F} is just linear. Notice that this would in fact easily follow from Big Iwaniec' conjecture.

\vspace{.2in} 

\noindent{\bf Conjecture}
\begin{equation}
\label{conj}
\|g\|_p \le \frac{p^2C(K)}{dist (p, \R\setminus I_k)}) \|h\|_p\,,\text{equivalently} \,\,\|(I-\mu T)^{-1}\|_p \le \frac{p^2C(K)}{dist (p, \R\setminus I_k)}) \,.
\end{equation}
 
 We will prove now this conjecture using the Bellman function technique. But first let us derive the corollary of the conjecture. As always $\|\mu\|_{\infty}=k<1$.
 
\begin{theorem}[Corollary of the conjecture]
\label{wqr}
Any solution of
$$
F_{\bar z} -\mu  F_z=0\,,
$$
which is in $W^{1+k}_{1, loc}$ is automatically in $W^{2}_{1, loc}$, and so satisfies Theorem \ref{ABB}. It automatically self-improves then (by Astala's \cite{A94}) to be in $W^{1+\frac1k -}_{1, loc}$.
\end{theorem}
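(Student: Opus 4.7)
The plan is to localize $F$ by a cutoff, use $L^2$-solvability of the Beltrami operator to produce a more regular candidate, and then identify the two by a Liouville-type rigidity argument.

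Fix $\eta \in C_c^\infty(\C)$ equal to $1$ on a ball containing $\mathrm{supp}\,\mu$ and set $G := \eta F \in W^{1+k}_1(\C)$, which is compactly supported. Since $F$ solves the Beltrami equation, $\mu$ vanishes on $\mathrm{supp}\,\nabla\eta$, and $F$ is holomorphic off $\mathrm{supp}\,\mu$,
\[
G_{\bar z} - \mu G_z \;=\; (\eta_{\bar z} - \mu \eta_z)\,F \;=\; \eta_{\bar z}\,F \;=:\; h \;\in\; C_c^\infty(\C).
\]
Compact support of $G$ together with Cauchy--Pompeiu yields $G_z = T G_{\bar z}$, so with $g := G_{\bar z}\in L^{1+k}(\C)$ we obtain
\[
(I - \mu T)\,g \;=\; h.
\]
Because $\|\mu T\|_{L^2\to L^2}\le k<1$, the Neumann series furnishes a further solution $\tilde g\in L^2$ of the same equation; the identity $\tilde g = h + \mu T\tilde g$ together with the compact support of $h$ and $\mu$ shows that $\tilde g$ is itself compactly supported, so also in $L^{1+k}$. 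If we show $g=\tilde g$, then $g \in L^2$, so $G\in W^2_1$, and since $F$ is holomorphic off $\mathrm{supp}\,\mu$ this yields $F \in W^2_{1,loc}$; Theorem \ref{ABB} and Astala's \cite{A94} then automatically lift it to $W^{1+1/k-}_{1,loc}$.

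The heart of the proof is therefore the claim: any compactly supported $\phi \in L^{1+k}(\C)$ with $(I-\mu T)\phi = 0$ must vanish. Define
\[
\Phi(z) \;:=\; \frac{1}{\pi}\int_{\C}\frac{\phi(\zeta)}{\zeta - z}\,dm_2(\zeta),
\]
so that $\Phi_{\bar z}=\phi$ and $\Phi_z = T\phi$; then $\phi = \mu T\phi$ rewrites as $\Phi_{\bar z} = \mu\Phi_z$, making $\Phi$ itself a Beltrami solution in $W^{1+k}_{1,loc}$ with $\Phi(z) = O(1/|z|)$ at $\infty$. Let $f$ be the K-quasiconformal principal solution; by Astala \cite{A94}, both $f$ and $f^{-1}$ lie in $W^{1+1/k-}_{1,loc}$. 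Setting $H := \Phi\circ f^{-1}$ and applying the chain rule to $\Phi = H\circ f$ together with $f_{\bar z}=\mu f_z$ gives
\[
0 \;=\; \Phi_{\bar z} - \mu\Phi_z \;=\; H_{\bar w}(f)\,(1-|\mu|^2)\,\overline{f_z},
\]
which, since $|\mu|<1$ and $f_z\neq 0$ a.e., forces $H_{\bar w}=0$. Thus $H$ is entire. Since $\Phi\to 0$ at $\infty$ and $f$ fixes $\infty$, $H(w)\to 0$ as $w\to\infty$, and Liouville's theorem gives $H\equiv 0$, hence $\Phi\equiv 0$ and $\phi = 0$.

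The main obstacle is the rigorous justification of the chain rule at the critical regularity $W^{1+k}_{1,loc}$. The natural H\"older estimate $\int|\nabla(\Phi\circ f^{-1})| \lesssim \|\nabla\Phi\|_{L^{1+k}}\|\nabla f^{-1}\|_{L^{1+1/k}}$ uses exactly conjugate exponents and lands precisely at the borderline of Astala's integrability range for $f^{-1}$. One standard way through is to mollify $\mu$ to smooth $\mu_n$, apply the classical Stoilow factorization to the approximate solutions, and pass to the limit using the quantitative Astala bound; alternatively, Theorem \ref{invth} allows one to solve $(I-\mu T)\tilde g_p = h$ in $L^p$ for every $p\in I_k$, and once the candidate $\phi = g-\tilde g_p$ is upgraded from $L^{1+k}$ into some $L^p$ with $p\in I_k$, the kernel-triviality on $L^p$ that is a direct output of the Bellman-function proof of Theorem \ref{invth} closes the loop immediately.
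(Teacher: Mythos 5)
Your overall scaffolding (localize by a cutoff, reduce to $(I-\mu T)g=h$ with $g\in L^{1+k}$, produce an $L^{2}$ solution $\tilde g$ by Neumann series, and then show $g=\tilde g$ by kernel-triviality of $I-\mu T$) matches the paper's proof step by step. You also correctly isolate the crux: kernel-triviality of $I-\mu T$ on $L^{1+k}$. That is precisely the paper's Lemma~\ref{inj}, and it is the only nontrivial ingredient.

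The gap is in how you propose to get that kernel-triviality. Your direct Stoilow/Liouville argument fails exactly where you flag it: with $\Phi\in W^{1+k}_{1,loc}$ and $f^{-1}$ only in $W^{p}_{1,loc}$ for $p<1+\frac1k$, the exponents $1+k$ and $1+\frac1k$ are \emph{exactly} H\"older-conjugate, and Astala's theorem gives an open-ended range that never reaches $1+\frac1k$. So $\nabla\Phi\cdot\nabla f^{-1}$ is not integrable by any H\"older pairing, the chain rule cannot be justified, and mollifying $\mu$ does not fix this: the problem is in the limit, not in the approximation. The result is genuinely a borderline theorem and the direct composition argument is blocked at the threshold. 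Your second fallback — ``upgrade $\phi=g-\tilde g_p$ from $L^{1+k}$ into some $L^{p}$ with $p\in I_k$'' — is where the argument becomes circular. You only know $g\in L^{1+k}$, and $1+k\notin I_k$; if you knew $g\in L^{p}$ for some $p>1+k$, bounded invertibility of $I-\mu T$ on $L^{p}$ would already give $g\in L^{2}$ and you would be done. There is no mechanism in your proposal that performs the upgrade, so the conclusion is being assumed.

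What the paper actually does at this endpoint is Lemma~\ref{inj}, and it is not a soft consequence of Theorem~\ref{invth}: it uses the \emph{quantitative} form of the estimate, i.e.\ Conjecture~\eqref{conj} with its explicit blow-up rate $\|(I-\mu T)^{-1}\|_{L^{p}}\lesssim \operatorname{dist}(p,\R\setminus I_k)^{-1}$. One replaces $\mu$ by $(1-\eps)\mu$ so that $p_{0}=1+\frac1k$ lies a distance $\approx C(K)\eps$ inside $I_{(1-\eps)k}$, obtains $\|\phi_\eps\|_{p_0}\le C(K)\eps^{-1}\|h\|_{p_0}$, combines it with the uniform $L^{2}$ bound from the Neumann series, deduces that $\eps\,\mu T\phi_\eps\rightharpoonup 0$ weakly in $L^{p_0}$, and concludes that $I-\mu T$ has dense range in $L^{1+\frac1k}$, hence by duality trivial kernel on $L^{1+k}$. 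Without this rate (the exact $1/\eps$), the regularized family is not controlled and the weak-limit argument does not close. So your proof needs Lemma~\ref{inj} (or an equivalent endpoint statement) supplied explicitly; as written, the heart of the argument is not established.
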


First use Conjecture to prove

\begin{lemma} [Behavior at the end points of interval $I_k$]
\label{inj}
Operators $I-\mu T, I-T\mu$ have dense range in $L^{1+\frac1k}$ and, correspondingly, trivial kernels on $L^{1+k}$.
\end{lemma}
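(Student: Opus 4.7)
The plan is to reduce all four assertions of the lemma to a single kernel claim by duality, and then to settle that claim by the very change of variables used in the proof of Theorem \ref{invth}. In the real bilinear pairing $\langle f,g\rangle := \int_{\C} f\,g\,dm_2$, the AB kernel $\frac{1}{\pi(\zeta-z)^2}$ is symmetric under $(z,\zeta)\mapsto(\zeta,z)$, so $T$ equals its own transpose and the transpose of $I-\mu T$ equals $I-T\mu$. Banach-space duality between $L^{1+k}$ and $L^{1+1/k}$ therefore converts density of the range of $I-\mu T$ on $L^{1+1/k}$ into triviality of the kernel of $I-T\mu$ on $L^{1+k}$, and vice versa. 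The intertwining $T(I-\mu T)T^{-1}=I-T\mu$---valid on every $L^p$, $1<p<\infty$, since $T$ is an $L^p$-isomorphism---identifies $\ker(I-\mu T)=T^{-1}\ker(I-T\mu)$ on $L^{1+k}$. Hence it suffices to prove that $I-\mu T$ has trivial kernel on $L^{1+k}$.

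Let $g\in L^{1+k}$ satisfy $g=\mu Tg$. Since $\mu$ is compactly supported, so is $g$, and $g\in L^1\cap L^{1+k}$. Set $\phi := \frac{1}{\pi}\int_{\C}\frac{g(\zeta)}{\zeta-z}\,dm_2(\zeta)$; then $\phi_{\bar z}=g$ and $\phi_z=Tg$ both lie in $L^{1+k}$, one has $\phi(z)=O(1/|z|)$ as $z\to\infty$, and the equation $g=\mu Tg$ is precisely the Beltrami equation $\phi_{\bar z}=\mu\phi_z$. Let $f$ be the normalized $K$-quasiconformal homeomorphism attached to $\mu$ (Theorem \ref{ABB}), and set $u:=\phi\circ f^{-1}$. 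The chain-rule computation from the proof of Theorem \ref{invth}, combined with Beltrami's equation for $f$, forces
\[
(1-|\mu|^2)\,(u_{\bar z}\circ f)\,\overline{f_z}\;=\;0,
\]
so $u_{\bar z}=0$ a.e. By Weyl's lemma $u$ agrees a.e.\ with an entire holomorphic function $\tilde u$; since $\phi(z)\to 0$ and $f(z)\sim z$ as $z\to\infty$, also $\tilde u(w)\to 0$ as $w\to\infty$. Liouville then gives $\tilde u\equiv 0$, so $\phi\equiv 0$ and $g\equiv 0$.

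The one delicate point is justifying the chain-rule computation at the borderline Sobolev exponent $p=1+k<2$. In Theorem \ref{invth} it is carried out for $p\in[2,1+1/k)$, where Lemma \ref{a2norm} supplies the $A_2$-weight $w=J_{f^{-1}}^{1-p/2}$ controlling the weighted $L^p$ estimates that power the substitution $u=\phi\circ f^{-1}$. At $p=1+k$ one sits just outside the direct range of Lemma \ref{a2norm}, and the justification must proceed by a limiting argument along $p\downarrow 1+k$ inside $I_k$. This is exactly where the Conjecture---equivalently Theorem \ref{PV} proved via Bellman functions---is used: it yields the quantitative weighted-$L^p$ control needed to balance the blow-up of $[w]_{A_2}$ against Astala's sharp distortion bound, thereby legitimizing the distributional identity $u_{\bar z}=0$ at the endpoint exponent.
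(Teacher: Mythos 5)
Your duality reduction is correct and is a cleaner packaging than the paper's: the AB kernel is symmetric under $z\leftrightarrow\zeta$, so in the bilinear pairing $\int fg\,dm_2$ one has $T^{T}=T$ and hence $(I-\mu T)^{T}=I-T\mu$, which together with $T(I-\mu T)T^{-1}=I-T\mu$ identifies all four statements. That part is fine.

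The gap is in the main step, and it is not a removable technicality: you have reduced the lemma precisely to the assertion ``$g\in L^{1+k}$, $g=\mu Tg\ \Rightarrow\ g=0$,'' and then propose to settle it by the substitution $u=\phi\circ f^{-1}$ and Weyl/Liouville. But the composition $\phi\circ f^{-1}$ is exactly the step that the paper cannot and does not perform at the endpoint. The critical Sobolev exponent below which a $W^{1,q}_{\mathrm{loc}}$ function need not pull back to $W^{1,1}_{\mathrm{loc}}$ under a $K$-quasiconformal change of variables is $2K/(K+1)=1+k$; your hypothesis puts $\phi$ exactly at that exponent, not above it, so there is no a priori reason why $u$ lies in $W^{1,1}_{\mathrm{loc}}$ or why $u_{\bar z}$ is computed by the chain rule in the sense of distributions. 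You acknowledge this, but the proposed repair---``a limiting argument along $p\downarrow 1+k$ inside $I_k$''---does not make sense in your setup: the given $g$ lives only in $L^{1+k}$ and does not improve, and the weighted estimates from Lemma \ref{a2norm} degenerate as $p\downarrow 1+k$ rather than supplying the missing control. Note also that, were your argument to go through, it would directly prove Theorem \ref{wqr} (the self-improvement from $W^{1+k}_{1,\mathrm{loc}}$ to $W^{2}_{1,\mathrm{loc}}$) without ever needing Lemma \ref{inj}; the fact that the paper must pass through the lemma is evidence that the direct chain-rule route at the endpoint is not available.

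The paper's proof is different and avoids the endpoint entirely by perturbing the coefficient rather than the exponent. It fixes $p_0=1+\frac1k$ and replaces $\mu$ by $(1-\eps)\mu$, so that $p_0$ sits strictly inside $I_{(1-\eps)k}$ at distance $\asymp\eps$ from its right endpoint. Theorem \ref{PV} (the Conjecture) then gives $\|\phi_\eps\|_{p_0}\le C(K)\eps^{-1}\|h\|_{p_0}$, while the $L^2$ Neumann series gives a uniform bound $\|\phi_\eps\|_2\le C(K)\|h\|_2$. These two facts force $\eps\,\mu T\phi_\eps\to 0$ weakly in $L^{p_0}$, and rewriting $\phi_\eps-\mu T\phi_\eps=h-\eps\,\mu T\phi_\eps$ shows that every $h\in C_0^\infty$ is a weak limit of elements of $\operatorname{Range}(I-\mu T)$ in $L^{p_0}$; since that range is a linear subspace, weak density upgrades to strong density. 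No change of variables for the original $\mu$ is performed, and no composition with $f^{-1}$ at the borderline exponent ever occurs. This is the idea your proposal is missing.
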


\begin{proof}
By $T(I-\mu T)T^{-1}=I-T\mu$ and invertibility of $T$ in all spaces $L^p, 1<p<\infty$, it is enough to prove just the dense range of $I-\mu T$ in $L^{1+\frac1k}$.
Consider $\eps>0$ and equation
\begin{equation}
\label{eq1}
\phi_{\eps} -(1-\eps)\mu T\phi_{\eps} =h
\end{equation} 
for nice $h\in C_0^{\infty}$. We want to consider the solution for $p_0=1+\frac1k$. We consider this $p_0$ in $I_{(1-\eps)k}$ because $\|(1-\eps)\mu\|_{\infty}=(1-\eps)k$. Point $p_0$ is obviously $C(K)\eps$ close to the right end point of $I_{(1-\eps)k}$. 

Hence, applying conjecture we conclude that
$$
\|\phi_{\eps}\|_{p_0} \le \frac{C(K)}{\eps}\|h\|_{p_0}\,.
$$

Notice two things: 1) In $L^2$ the norma of $\phi_{\eps}$ are uniformly bounded by $C(K)$ just by using Neumann series in $L^2$ in  \eqref{eq1}; 2) in $L^{p_0}$ the norms of $ T(\eps\phi_{\eps})$ are uniformly bounded. It is immediate to conclude from 1) and 2) that
$$
\eps\mu\,T\phi_{\eps}\,\,\text{converges weakly to zero in}\,\, L^{p_0}\,.
$$

Rewrite our equation \eqref{eq1} as follows:
$$
\phi_{\eps} -\mu T\phi_{\eps} =h-\eps \mu T\phi_{\eps}\,.
$$
The right hand side weakly in $L^{p_0}$ converges to any function $h$, whose family is strongly dense in $L^{p_0}$. So the right hand side is weakly dense in $L^{p_0}=L^{1+\frac1k}$. But it is in $Range (I-\mu T)$, so this range is weakly dense in $L^{1+\frac1k}$. Being a linear set this range is then strongly dense in $L^{1+\frac1k}$. Lemma \ref{inj} is proved

\end{proof}

\begin{proof}[The proof of Theorem  \ref{wqr}]

Consider $R_{\bar z}-\mu R_{ z} =0, R\in W^{1+k}_{1, loc}$. Choose $\phi\in C_0^{\infty}$. Set $G=\phi R$. Then
$$
G_{\bar z} - G_z = (\phi_{\bar z} -\mu\phi_z) R\,.
$$
Looking at this formula we can start to think that the support of $\mu$ is compact (is contained in the support of $\phi$).

As $G$ vanishes at infinity it is the Cauchy transform of its $\bar\partial G= G_{\bar z}$, and therefore, $G_z= T G_{\bar z}$.
We can rewrite the equation
$$
(I-\mu T) \psi = h\,;\,\, \psi:= G_{\bar z}\,,\,\, h=(\phi_{\bar z} -\mu\phi_z) R\in L^{\frac{2(1+k)}{1-k}} \subset L^2(\C)\cap L^{2+\eps}(\C)\,.
$$
The inclusion above for function $R$ is by Sobolev imbedding, in fact, we assumed that $R\in W^{1+k}_{1, loc}$, and by the compactness of the support of $R$.
It has been already remarked, that in the last two equation we have the right to think that $\mu=0$ outside of the support of $\phi$.
Let us consider the convergent in $L^2(\C)$ of the series of compactly supported functions:
$$
\psi_0= h +\mu Th +\mu T\mu T h+\dots\,.
$$
It solves our equation, it is in $L^2(\C)$ and it is compactly supported, hence it is in $L^{1+k}(\C)$. Therefore we got a solution $\psi_0$ of $(I-\mu T)\psi_0 =h$, which is in $L^{1+k}\cap L^2$.
But $\psi= G_{\bar z}$ is also in $L^{1+k}$. By Lemma \ref{inj} we have $\psi=\psi_0\in L^2$. It means that $R\in W^2_{1, loc}$. Theorem \ref{wqr} is proved.

\end{proof}

\noindent{\bf Example.} $f=\frac{|z|^{1-\frac1K}}{z}$ for $z\in \D$, $f=\frac1z$ outside $\D$. It is a solution of Beltrami equation with $\mu, \|\mu\|_{\infty}=k=\frac{K-1}{K+1}$, and it is in $W^{q}_{1, loc}$ for every $q<1+k$.  But it is NOT K- quasiregular mapping, it has a singularity at $0$.

\bigskip

\noindent This example shows how sharp is Theorem \ref{wqr}. Its proof hinges on Conjecture \ref{conj}.
We prove this Conjecture now using Bellman technique. First we analyze function $F$ from \eqref{F}. 
Recall that  $W:= (J_{f^{-1}})^{1-p/2}$, and if $p\in (1+k, 1+\frac1k)$ the estimate in \eqref{F} is $F_p([W]_{A_2})$, where $F_p$ is the best function one can have in the estimate
$$
\|T\|_{L^p(W)} \le F_p([W]_{A_p})\,.
$$

Suppose we can prove
\begin{theorem}
\label{sharp}
$F(x) \le C\, x^{\max (1, 1/(p-1))}\,.$
\end{theorem}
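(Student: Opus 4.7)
The plan is the standard two-step program: first prove the sharp \emph{linear} $A_2$ estimate $\|T\|_{L^2(w)} \le C\,[w]_{A_2}$ for the Beurling--Ahlfors transform, and then transfer this bound to all $L^p$, $p\neq 2$, by sharp Rubio de Francia extrapolation. Read against the definition of $F_p$, this yields $F_p(Q) \le C_p\,Q^{\max(1,1/(p-1))}$, which is exactly the claimed estimate.

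First, following Petermichl, I would represent $T$ as an average, over random shifted and rotated dyadic grids $\mathcal{D}$ on $\C$, of a bounded-complexity Haar shift $S_{\mathcal{D}}$. Because $[w]_{A_2}$ is invariant under such averaging, the weighted $L^2$ problem reduces to the purely dyadic statement $\|S_{\mathcal{D}}\|_{L^2(w)} \le C\,[w]_{A_2}$ uniformly in the grid, which is genuinely amenable to Bellman technology.

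At a fixed dyadic cube $I$ I would introduce the state variables $x=\av{f}{I}$, $r=\av{w}{I}$, $s=\av{w^{-1}}{I}$, together with accumulators such as $t=\av{(Sf)^2 w^{-1}}{I}$ and its dual counterpart, and search for a Bellman function $B=B_Q(x,r,s,t,\dots)$ on the convex domain $\{1\le rs\le Q\}$ with two properties: a linear size bound $0\le B\le C\,Q\,\av{f^2 w^{-1}}{I}$, and a discrete concavity along any admissible dyadic splitting that is compatible with the one-step action of $S_{\mathcal{D}}$, in the spirit of Nazarov--Treil--Volberg. Telescoping this concavity through all scales of the grid and closing with the Haar square-function identity converts it into the $L^2(w)$ estimate with constant \emph{linear} in $Q=[w]_{A_2}$.

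Finally I would invoke the sharp extrapolation theorem of Dragi\v{c}evi\'c--Grafakos--Pereyra--Petermichl: a linear operator with $\|T\|_{L^2(w)} \le C\,[w]_{A_2}$ for all $w\in A_2$ automatically obeys $\|T\|_{L^p(w)} \le C_p\,[w]_{A_p}^{\max(1,1/(p-1))}$ for every $p\in(1,\infty)$, giving the theorem. The hard step is unquestionably the Bellman construction: picking the right state variables and pushing through a concavity inequality whose size bound is \emph{truly linear} in $[w]_{A_2}$. Anything weaker would, upon extrapolation, spoil the exponent and undercut the sharp Sobolev-integrability target of Theorem~\ref{wqr}; the representation and the sharp-extrapolation steps are by now essentially black-box tools, whereas the linear-in-$A_2$ concavity of $B$ must be extracted by a direct hands-on analysis of the finite-difference Hessian of the candidate Bellman function.
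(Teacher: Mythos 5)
Your proposal is correct, but it takes a genuinely different route from the paper in how the sharp $p=2$ case is transferred from the dyadic model to the Beurling--Ahlfors operator. The paper never invokes a representation of $T$ as an average of Haar shifts. Instead it works with \emph{heat extensions}: $(R_i^2\f,\psi)$ is rewritten as an integral over $\R^3_+$ of products of spatial derivatives of the heat extensions of $\f, \psi$ (Lemma~\ref{l1.1}), and then a six-variable Bellman function $B_Q$ with $0\le B_Q\le 5Q(X+Y)$ and $-d^2B_Q\ge |dx||dy|$ (Theorem~\ref{1.3}) is run against the vector of heat extensions to produce the linear-in-$Q$ bound, exactly as in \cite{PV}. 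The existence of $B_Q$ is in turn derived from the dyadic martingale-transform estimate $\sup_\sigma\|T_\sigma\|_{L^2(w)}\lesssim [w]_{A_2}^{dyadic}$ (Theorem~\ref{tE}, itself proved via a Carleson-embedding Bellman function $x^\al y^\al$). You, by contrast, propose to write $T$ as an average over translated and rotated dyadic grids of Haar shifts (à la Petermichl, or rather \cite{DV1} for the Beurling--Ahlfors operator), apply a linear-in-$A_2$ dyadic bound uniformly in the grid, and conclude by averaging; then both you and the paper finish identically by sharp Rubio de Francia extrapolation (\cite{DGPP}).

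The trade-offs are worth noting. The averaging/representation route is more modular and makes the dyadic bound a black box, but it requires that an averaging formula for $T$ exists — something which for $R_1^2-R_2^2+2iR_1R_2$ is more delicate than for the Hilbert transform and was established only after \cite{PV}. The heat-extension route avoids any representation theorem: it only needs the much softer duality identity of Lemma~\ref{l1.1} together with the Bellman function $B_Q$. That said, the paper still has to \emph{construct} $B_Q$, and it does so precisely by discretizing to the martingale-transform estimate — so the same dyadic fact sits at the bottom of both arguments; the two approaches differ only in how the dyadic information is lifted back to the continuous operator. Two small cautions about your write-up: your framing of the averaging step (``$[w]_{A_2}$ is invariant under such averaging'') is loose — what one uses is that the dyadic $A_2$ constants along every shifted/rotated grid are dominated by a fixed multiple of $[w]_{A_2}^{class}$, so the grid-uniform dyadic bound integrates to the continuous one; and the accumulator $\av{(Sf)^2w^{-1}}{I}$ you propose is not quite the standard choice — the Bellman function actually used in the paper (and in \cite{Wi}) is built on the bilinear form $(f,h_I)(g,h_I)$ with state vector $(\av{f^2w}{},\av{g^2w^{-1}}{},\av{f}{},\av{g}{},\av{w}{},\av{w^{-1}}{})$, which is what yields the needed size bound linear in $Q$.
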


Then we recall that  for $1+k<p\le 2, \,p':=\frac{p}{p-1}$  we already estimated $[W]_{A_p}^{1/(p-1)}= [W^{-1/(p-1)}]_{A_{p'}} \le [W^{-1/(p-1)}]_{A_2}\le \frac{p^2C(K)}{p-1-k}$. And for $1+\frac1k>p\ge 2$, $[W]_{A_2}\le \frac{p^2C(K)}{1+\frac1k-p}$. These estimates were based on sharp distortion theorem of Astala \cite{A94}. We made these estimates in Lemma \ref{a2norm}. These estimates and Theorem \ref{sharp} then imply trivially conjecture \eqref{conj}. Hence this Theorem \ref{sharp} is the only ingredient left to be proved to have Theorem \ref{wqr}.

\section{Linear estimates of weighted Ahlfors--Beurling transform by Bellman function technique}
\label{BellmanAB}

Let $\omega$ be any weight on $\R^2$, denote its heat extension into $\R^3_+$ by $\omega(x,t)=\omega(x_1,x_2,t)$:
\[
\omega(x,t)= \frac1{\pi t}\int\int_{\R^2} \omega(y)\exp(-\frac{\|x-y\|^2}{t})dy_1dy_2 \,.
\] 

We define 
\[
[\omega]_{A_p}^{heat}:= \sup_{(x,t)\in \R^3_+} \omega(x,t)\left(\omega^{-\frac1{p-1}}(x,t)\right)^{p-1}\,.
\]

The weights $w$  with finite $[w]_{A_p}^{heat}$ are called $A_p$ weights. There is an extensive theory of $A_p$ weights, see for example \cite{St},\cite{GCRF}. The usual definition differs from the one above, but it describes the same class of weights. Actually, we will say more about the relationship between the classical definition and ours. But first we state two more theorems, whose combined use gives Theorem \ref{sharp} at least for $p\ge 2$. 

\bigskip

\noindent{\bf Remark.} The method called Rubio de Francia extrapolation--one can  see its exposition in \cite{CUMPbook}--actually shows that to have a full range of $p$'s  in Theorem \ref{sharp} it is enough to prove it only for $p=2$.

\medskip

\begin{theorem}
\label{t2}
For any $A_p$ weight $w$ and any $p\geq 2$
we have
\[
\|T\|_{L^p(wdA)\rightarrow L^p(wdA)}\leq C(p)( [w]_{A_p}^{heat})^{\frac1{p-1}}\,.
\]
\end{theorem}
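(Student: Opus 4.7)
My plan is the standard heat-flow Bellman function approach of Petermichl--Volberg. By the Rubio de Francia extrapolation theorem noted in the Remark, it suffices to handle the endpoint $p=2$, where the target becomes $\|T\|_{L^2(w\,dA)\to L^2(w\,dA)}\leq C [w]_{A_2}^{heat}$. By polarization and duality this is equivalent to the bilinear inequality
\[
\Bigl|\int_{\R^2} Tf\cdot \bar g\, dA\Bigr| \leq C\,[w]_{A_2}^{heat}\,\|f\|_{L^2(w)}\,\|g\|_{L^2(w^{-1})}
\]
for smooth compactly supported $f,g$. Exploiting that $T$ is the Fourier multiplier with symbol $\bar\zeta/\zeta$, equivalently $T=c\,\partial_z^2(-\Delta)^{-1}$, and writing $(-\Delta)^{-1}$ as the time integral of the heat semigroup, I rewrite the bilinear form as a space-time integral
\[
\int Tf\cdot \bar g\, dA = -c\int_0^\infty\!\!\int_{\R^2}(\partial_z f)(x,t)\,\overline{(\partial_z g)(x,t)}\, dA(x)\,dt,
\]
where $f(x,t),g(x,t)$ now denote heat extensions. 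This is what forces the $[\cdot]_{A_p}^{heat}$ characteristic to be the natural one: the integrand lives exactly in the upper half-space $\R^3_+$ where the supremum in $[w]_{A_2}^{heat}$ is taken.

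For the Bellman step, I introduce the vector of Bellman variables $X(x,t)=(u,v,M,m)(x,t)$ where $u,v,M,m$ are the heat extensions of $|f|^2 w$, $|g|^2 w^{-1}$, $w$, $w^{-1}$ respectively. Each component solves the heat equation, so for any smooth $B$ on the convex domain $\Om_Q=\{uM\geq 0,\,vm\geq 0,\,1\leq Mm\leq Q\}$ with $Q:=[w]_{A_2}^{heat}$, Green's identity in $\R^3_+$ yields
\[
\int_{\R^2} B(X(x,0))\,dA = \int_0^\infty\!\!\int_{\R^2} \langle -d^2B(X)\,dX,dX\rangle\,dA\,dt.
\]
I look for $B$ satisfying two properties on $\Om_Q$:
\begin{itemize}
\item \textbf{size:} $0\leq B(u,v,M,m)\leq C\,Q\sqrt{uv}$;
\item \textbf{concavity:} the quadratic form $-d^2B(X)\,dX\,dX$, evaluated on the increments actually produced by the heat flow, pointwise dominates the cross term $2|\partial_z f(x,t)||\partial_z g(x,t)|$.
\end{itemize}
Given such a $B$, the concavity bound applied inside the space-time integral reproduces the integrand of the bilinear representation, while Cauchy--Schwarz applied to the boundary term gives $\int_{\R^2}B(X(x,0))\,dA\leq C\,Q\,\|f\|_{L^2(w)}\|g\|_{L^2(w^{-1})}$, and the desired bilinear bound follows.

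The main obstacle is the construction and Hessian verification of a Bellman function whose ceiling is \emph{linear} in $Q$. The natural ansatz is $B=\sqrt{uv}\,\Psi(uM,vm,Mm)$ for a scalar function $\Psi$ engineered so that differentiation, combined with the heat equation constraints on $dX$, produces the mixed term $|\partial_z f||\partial_z g|$ with coefficient of the correct size. The Hessian inequality then reduces to the non-negativity of a specific quadratic form on the four-dimensional tangent space of $\Om_Q$, uniformly as the ``weight budget'' $Q$ varies, and it is precisely this pointwise algebraic inequality that furnishes the linear dependence on $Q$. This nonlinear PDE-style computation is the technical heart of the argument; once it is in place, the rest is routine assembly, and the full range $p\geq 2$ then follows by the extrapolation argument cited above.
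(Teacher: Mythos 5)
Your strategy is the paper's strategy at a high level: reduce to $p=2$ by Rubio de Francia extrapolation, pass to a bilinear form, represent that form as an integral over $\R^3_+$ of a product of heat-extension derivatives, and close the estimate with a Bellman function subject to a size bound (linear in $Q$) and a Hessian concavity condition. But there is a genuine gap in the Bellman variable set you choose, and it is not a minor technicality.

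You take only four Bellman variables, $(u,v,M,m)=(|f|^2w,\,|g|^2w^{-1},\,w,\,w^{-1})$, and then assert that the quadratic form $-d^2B$ ``evaluated on the increments actually produced by the heat flow'' dominates $2|\partial_z f||\partial_z g|$. This is the crux, and your ansatz makes it unworkable. The spatial gradient of $u=|f|^2w$ mixes $\nabla f$ with $\nabla w$ (and likewise for $v$), so the vector $(\nabla u,\nabla v,\nabla M,\nabla m)$ does not expose $\nabla f$ and $\nabla g$ as separate directions in the tangent space; there is no quadratic form in $(du,dv,dM,dm)$ that isolates the product $|\nabla f||\nabla g|$ pointwise. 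The paper gets around exactly this obstruction by carrying $\f$ and $\psi$ (equivalently $f$ and $g$) themselves as Bellman variables: the Bellman function in Theorem~\ref{1.3} depends on six variables $(X,Y,x,y,r,s)$ with $x,y$ the heat extensions of $f,g$, and the concavity requirement is literally $-d^2B\ge|dx|\,|dy|$. The additional domain constraints $x^2\le Xs$, $y^2\le Yr$ (Cauchy--Schwarz between $\langle f\rangle^2$, $\langle f^2 w\rangle$, $\langle w^{-1}\rangle$) are also lost in your four-variable setup, and they are precisely what gives the domain enough ``room'' for a size bound linear in $Q$ to coexist with that Hessian inequality. So your proposed factorization $B=\sqrt{uv}\,\Psi(uM,vm,Mm)$ is the wrong ansatz, not merely an unverified one.

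A secondary difference (not a gap, but worth noting): you attack the full multiplier $\zeta/\bar\zeta$ at once via $\partial_z$ pairings, whereas the paper splits $T=R_1^2-R_2^2+2iR_1R_2$, reduces $2iR_1R_2$ to $R_1^2-R_2^2$ by a $\pi/4$ rotation (this is where rotational invariance of $[w]_{A_2}^{heat}$ is used), and proves the estimate for $R_i^2$ with the real-variable representation $(R_1^2\f,\psi)=-2\iiint_{\R^3_+}\partial_{x_1}\f\,\partial_{x_1}\psi$. Either representation can serve, but matching the Bellman inequality to the integrand requires the six-variable setup; once you adopt it, the paper's $2\times 2$ reduction via Lemma~\ref{1.10} gives the pointwise bound $\left(\partial_t-\Delta\right)b\ge|\partial_{x_1}\f||\partial_{x_1}\psi|+|\partial_{x_2}\f||\partial_{x_2}\psi|$ cleanly. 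You should rebuild your argument around those six variables and the pointwise quadratic-form inequality $-d^2B\ge|dx|\,|dy|$ on $D_Q$, with the size bound $0\le B\le 5Q(X+Y)$; the rest of your outline then assembles exactly as in the paper.
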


We want to discuss the connection between $[w]_{A_p}^{heat}$ and 
$[w]_{A_p}^{class}$. Here $[w]_{A_p}^{class}$ denotes the following supremum over all discs in the plane:
\[
[w]_{A_p}^{class}:=\sup_{B(x,R)}\,\left(\frac1{|B(x,R)|}\int_{B(x,R)}\omega dA\right)\,\cdot
\left(\frac1{|B(x,R)|}\int_{B(x,R)}\omega^{-\frac1{p-1}} dA\right)^{p-1}\,.
\]

Obviously, there exists a positive absolute constant $a$ such that for any function $w$
\[
a\,[w]_{A_p}^{class} \leq [w]_{A_p}^{heat}\,.
\]

{\bf Remark}. The opposite inequality is easy to prove too. In fact, we have

\begin{theorem}
\label{t4}
There exists a finite absolute constant $b$ such that
\[
[w]_{A_p}^{heat} \leq b\,[w]_{A_p}^{class}\,.
\]
\end{theorem}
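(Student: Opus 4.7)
The strategy is to dominate the heat kernel by a summable series of box--averaging kernels and then invoke the classical $A_p$ condition on each box. Fix $(x,t)\in\R^3_+$ and set $B_k:=B(x,2^k\sqrt t)$. On the annulus $B_k\setminus B_{k-1}$ ($k\ge 1$) the Gaussian $\tfrac{1}{\pi t}e^{-\|x-y\|^2/t}$ is bounded above by $\tfrac{e^{-4^{k-1}}}{\pi t}=\tfrac{4^k e^{-4^{k-1}}}{|B_k|}$, while on $B_0$ it is bounded by $\tfrac{1}{|B_0|}$. Writing $\beta_0:=1$ and $\beta_k:=4^ke^{-4^{k-1}}$ for $k\ge1$, one has $\sum_k\beta_k<\infty$, and
\[
\omega(x,t)\le\sum_{k\ge0}\beta_k\langle\omega\rangle_{B_k},\qquad\sigma(x,t)\le\sum_{k\ge0}\beta_k\langle\sigma\rangle_{B_k},
\]
where $\sigma:=\omega^{-1/(p-1)}$ and $\langle\cdot\rangle_{B_k}$ denotes the average over $B_k$.

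Next I would rewrite the classical $A_p$ hypothesis in the symmetric form $\langle\omega\rangle_B^{1/p}\langle\sigma\rangle_B^{1/p'}\le([w]_{A_p}^{class})^{1/p}$ for every ball $B$, with $p':=p/(p-1)$. Since $s\mapsto s^{1/p}$ and $s\mapsto s^{1/p'}$ are subadditive on $[0,\infty)$, the previous display yields
\[
\omega(x,t)^{1/p}\sigma(x,t)^{1/p'}\le\sum_{k,j\ge0}\beta_k^{1/p}\beta_j^{1/p'}\langle\omega\rangle_{B_k}^{1/p}\langle\sigma\rangle_{B_j}^{1/p'}.
\]
For a cross term with $j\le k$, note that $B_j\subset B_k$ and $\langle\sigma\rangle_{B_j}\le 4^{k-j}\langle\sigma\rangle_{B_k}$; combining with the classical $A_p$ condition on $B_k$ gives
\[
\langle\omega\rangle_{B_k}^{1/p}\langle\sigma\rangle_{B_j}^{1/p'}\le 4^{(k-j)/p'}\,([w]_{A_p}^{class})^{1/p},
\]
and the case $j>k$ is symmetric with factor $4^{(j-k)/p}$.

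After substituting these estimates the double sum factors into a product of two single series of the form $\sum_k\beta_k^{1/p}\,4^{k/p'}=\sum_k 4^k e^{-4^{k-1}/p}$ and its $p\leftrightarrow p'$ analogue. Each series is finite because the super--exponential Gaussian decay of $\beta_k$ beats the geometric prefactor, yielding $\omega(x,t)^{1/p}\sigma(x,t)^{1/p'}\le b^{1/p}\,([w]_{A_p}^{class})^{1/p}$ for some constant $b=b(p)$. Raising to the $p$th power and taking the supremum over $(x,t)\in\R^3_+$ gives the stated inequality. The only real point of worry in this plan is the convergence of the series just mentioned, i.e.\ whether the Gaussian tail still dominates after being coupled with the volume--ratio factor $4^{|k-j|/p'}$; this is the main (and essentially only) obstacle, and Gaussian decay is exactly strong enough to absorb it.
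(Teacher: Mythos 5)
Your proof is correct, and it uses the same two key ingredients as the paper's argument — a decomposition of the Gaussian kernel over dyadic annuli, and control of the resulting volume-ratio penalties $4^{|k-j|}$ by the super-exponential Gaussian tail — but it packages them in a genuinely different and in some ways cleaner way. The paper proceeds in two stages: it first proves an intermediate lemma comparing one classical average against one heat average, $\langle f\rangle_B\langle g\rangle^h_B\leq cA$, using the one-sided monotonicity estimate $\langle f\rangle_{B_k}\geq 4^{-k}\langle f\rangle_B$, and then bootstraps this to the full heat–heat product $\langle f\rangle^h_B\langle g\rangle^h_B\leq c^2 A$. Moreover, as written that argument only treats the product $\langle f\rangle\langle g\rangle$ and hence is really the $p=2$ case; for general $p$ one must adapt it. You instead expand \emph{both} heat extensions simultaneously as sums of box averages, then use the subadditivity of $s\mapsto s^{1/p}$ and $s\mapsto s^{1/p'}$ to obtain a double series over $(k,j)$, and kill the cross-terms by a single appeal to the containment $B_{\min(j,k)}\subset B_{\max(j,k)}$ together with the classical $A_p$ condition on the larger ball. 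This one-pass argument handles every $p\in(1,\infty)$ uniformly and avoids the intermediate lemma entirely, at the modest cost of introducing the concavity trick; it also makes completely transparent where the $p$-dependence of the final constant $b(p)$ comes from (the series $\sum_k 4^ke^{-4^{k-1}/p}$). One small nitpick: the ``factoring'' of the double sum is really an upper bound obtained by extending the triangular index range $j\le k$ to all $(j,k)$ — all terms being nonnegative, this is harmless, but it is an estimate rather than an identity, and it would be cleaner to say so explicitly.
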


\begin{proof} Constants will be denoted by the letters $c,C$; they may vary from line to line and even within the same line.
We introduce the following notations. $B_k$ denotes $B(0,2^k),\, k=0,1,2,...$, $\langle f\rangle_B$ stands for the average $\frac1{|B|}\int_B f dA$, $f(B)$ stands for $\int_B f dA$. If $B=B(0,r)$, then $\langle f\rangle^h_B$ 
stands for $\frac1{\pi\,r^2}\iint_{\R^2} f(x)\exp(-\frac{\|x\|^2}{r^2}) dx_1 dx_2$.

\medskip

\begin{lemma}
\label{l1t4}
Suppose  $f$ and $g$, positive functions on the plane, are such that $\sup_B\,\langle f\rangle_B\langle g\rangle_B
=A$, then there exists a finite absolute constant $c$ such that 
\[
\langle f\rangle_B \langle g\rangle^h_B \leq cA
\] 
for any disc $B$.
\end{lemma}

\begin{proof}

Scale invariance allows us to prove this only for one disc $B=B(0,1)$.
We start the estimate:
\[
\langle f\rangle_B \langle g\rangle^h_B \leq c\,\langle f\rangle_B\Sigma_k 2^{2k}\exp(-2^{2k-2})\frac{A}{\langle f\rangle_{B_k}}\,.
\]
On the other hand $\langle f\rangle_{B_k}>c \langle f\rangle_{B_{k-1}}>\dots c^k\langle f\rangle_{B}$ (recall that $B$ is the unit disc). Plugging this in the inequality above, we get

\[
\langle f\rangle_B \langle g\rangle^h_B \leq c \langle f\rangle_B\Sigma_k C^k \exp(-2^{2k-2})
\frac{A}{\langle f\rangle_{B}}\,.
\]
In other words,
\[
\langle f\rangle_B \langle g\rangle^h_B \leq cA\Sigma_k C^k \exp(-2^{2k-2}) =cA 
\]
and the lemma is proved.
\end{proof}

Now we want to prove Theorem \ref{t4}. Fix $B$. Again by scale invariance it is enough to consider $B=B(0,1)$.
By the previous lemma, we know that
\begin{equation}
\label{fg}
\langle f\rangle_{B_k} \langle g\rangle^h_{B_k} \leq cA
\end{equation}
for any $k$.

Now

\[
\langle f\rangle^h_B \langle g\rangle^h_B \leq c\langle g\rangle^h_B\Sigma 2^{2k}\exp(-2^{2k-2})\langle f\rangle_{B_k} \leq c\langle g\rangle^h_B\Sigma 2^{2k}\exp(-2^{2k-2})\frac{cA}{\langle g\rangle^h_{B_k}}\,.
\]
The last inequality used \eqref{fg}.

On the other hand, $\langle g\rangle^h_{B_k}>c \langle g\rangle^h_{B_{k-1}}>\dots c^k \langle g\rangle^h_{B}$ (recall that $B$  is the unit disc). Plugging this in the inequality above, we get

\[
\langle f\rangle^h_B \langle g\rangle^h_B \leq c\langle g\rangle^h_B\Sigma_k C^k \exp(-2^{2k-2})
\frac{cA}{\langle g\rangle^h_{B}}\,.
\]

In other words,

\[
\langle f\rangle^h_B \langle g\rangle^h_B \leq c^2 A\Sigma_k C^k \exp(-2^{2k-2})=c^2 A\,.
\]

Theorem \ref{t4} is completely proved.

\end{proof}

\bigskip

The next  result proves Theorem \ref{sharp} for $p= 2$. We will show later how to extrapolate just from the result at $p=2$ to all possible $p$'s. 

\begin{theorem}
\label{t5}
For any $A_2$ weight $w$ 
we have
\[
\|T\|_{L^2(wdA)\rightarrow L^2(wdA)}\leq C\, [w]_{A_2}^{class}\,.
\]
\end{theorem}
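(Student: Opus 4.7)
The plan is to reduce to the heat characteristic $Q:=[w]_{A_2}^{heat}$, which is equivalent to $[w]_{A_2}^{class}$ by Theorem \ref{t4}, and then prove the polarized bilinear bound
\[
|\langle Tf,g\rangle|\le C\,Q\,\|f\|_{L^2(w)}\,\|g\|_{L^2(w^{-1})}
\]
by a Bellman-function argument living on the heat extension to $\R^3_+$. The attack will proceed in three stages: a Littlewood--Paley representation of $T$, the construction of a Bellman function on a six-dimensional convex domain, and the integration along the heat flow.

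For the first stage, using that the AB transform has Fourier symbol $\zeta/\bar\zeta$ (so $T$ commutes with $P_t$) and that $-\Delta^{-1}=\int_0^\infty P_t\,dt$ on mean-zero functions, the standard Littlewood--Paley identity $\int fg\,dA=-8\int_0^\infty\!\int(\partial_z P_t f)(\partial_{\bar z}P_tg)\,dx\,dt$ together with the factorization $\partial_{\bar z}T=\partial_z$ leads to a representation of the form
\[
\langle Tf,g\rangle = C_0\int_0^\infty\!\!\int_{\R^2}(\partial_z P_t f)(\partial_z P_t g)\,dx\,dt\,,
\]
reducing the problem to controlling this triple integral by $Q\,\|f\|_{L^2(w)}\,\|g\|_{L^2(w^{-1})}$. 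The goal is to dominate the integrand, pointwise in $(x,t)$, by a suitable $-\Delta_x b$ for a function $b$ built as a composition of a Bellman function with the six heat extensions $P_t(|f|^2 w),\,P_t(|g|^2w^{-1}),\,P_t f,\,P_t g,\,P_t w,\,P_t w^{-1}$.

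The main stage is the construction of a Bellman function $B(X_1,\ldots,X_6)$ on the convex domain
\[
\Omega_Q := \{X\in\R^6_{>0}\colon X_3^2\le X_1 X_5,\ X_4^2\le X_2 X_6,\ 1\le X_5 X_6\le Q\}
\]
enjoying the size bound $0\le B\le C\,Q\,\sqrt{X_1 X_2}$ together with the Hessian estimate $-\sum_{i,j}\partial_{ij}^2B\,dX_i\,dX_j \ge c\,|dX_3|\,|dX_4|$ on all tangent vectors to $\Omega_Q$. The six slots correspond along the heat flow to the six averages listed above; the constraints $X_3^2\le X_1X_5$ and $X_4^2\le X_2X_6$ are Cauchy--Schwarz, while $X_5X_6\le Q$ is the heat $A_2$ condition. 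One then sets $b(x,t):=B(P_t(|f|^2 w)(x),\ldots,P_t(w^{-1})(x))$ and computes $(\partial_t-\Delta_x)b = -\sum_{i,j}\partial_{ij}^2B\,\nabla X_i\cdot\nabla X_j$. Integrating over $\R^3_+$ in $t$ and using the concavity in the variables $X_3,X_4$ dominates the integrand of Stage 1 through $\nabla X_3=\nabla P_t(fw)$ and $\nabla X_4=\nabla P_t g$ (which, via the chain rule and H\"older, produce $|\partial_z P_t f|\,|\partial_z P_t g|$), while the size bound controls the boundary term $\int_{\R^2} b(x,0)\,dx\le C\,Q\,\|f\|_{L^2(w)}\|g\|_{L^2(w^{-1})}$.

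The main obstacle will be Stage 2: producing an explicit $B$ whose mixed Hessian yields exactly $|dX_3|\,|dX_4|$ of size $\gtrsim 1$ while the total size remains \emph{linear} in $Q$ rather than $Q^{3/2}$ (which a naive ansatz delivers). The natural candidate, in the spirit of Nazarov--Treil--Volberg and Petermichl--Volberg, is an expression of the form $B=C Q\sqrt{X_1 X_2}-\Phi(X)$, where $\Phi$ is a concave correction built from the Schur complements $X_1 X_5 - X_3^2$ and $X_2 X_6 - X_4^2$ weighted by the $A_2$ constraint $X_5 X_6\le Q$; verifying the required Hessian inequality uniformly on all of $\Omega_Q$ is a delicate convex-analysis calculation that forces the specific power of $Q$ in the answer, and is what distinguishes the sharp linear $A_2$ bound proved here from the weaker exponents accessible by softer methods.
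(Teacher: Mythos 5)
Your Stages 1 and 3 are essentially the paper's argument: a heat--flow Littlewood--Paley representation of the bilinear form, a Bellman function on a six--dimensional domain with $-d^2B\gtrsim |dx|\,|dy|$ and size $\lesssim Q(X+Y)$, and integration of $(\partial_t-\Delta_x)b$ over a slab. (The paper works with $R_1^2$ and $R_2^2$ separately after a rotation rather than directly with $\partial_z P_t f\,\partial_z P_t g$, but this is a cosmetic difference.) Two issues, one small and one large.

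The small one: your domain $\Omega_Q$ has the Cauchy--Schwarz constraints crossed. With $X_1=P_t(|f|^2w)$, $X_3=P_tf$, $X_5=P_tw$, $X_6=P_t(w^{-1})$, Cauchy--Schwarz applied to $f=(f w^{1/2})\cdot w^{-1/2}$ gives $X_3^2\le X_1\,X_6$, not $X_3^2\le X_1\,X_5$; symmetrically $X_4^2\le X_2\,X_5$, not $X_4^2\le X_2\,X_6$. This matches the paper's $x^2<Xs$, $y^2<Yr$ with $s=\omega^{-1}$, $r=\omega$.

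The large one is Stage 2, and it is not a ``delicate calculation'' you can leave to the reader---it is the whole theorem. You propose to \emph{construct} an explicit $B=CQ\sqrt{X_1X_2}-\Phi(X)$ with $\Phi$ concave, built from the Schur complements, and to verify $-d^2B\ge c\,|dX_3|\,|dX_4|$ by ``convex analysis.'' No such explicit $B$ with the linear--in--$Q$ size bound is exhibited in the paper or, as far as I know, in the literature; the na\"{\i}ve ans\"atze one writes down in this way typically produce $Q^{3/2}$ unless very delicately tuned, which is precisely the obstruction you name but do not overcome. The paper deliberately sidesteps this. It establishes existence of the required $B$ \emph{indirectly}: it first proves the sharp dyadic martingale--transform bound $\sup_\sigma\|T_\sigma\|_{L^2(w)\to L^2(w)}\le A\,[w]^{dyadic}_{A_2}$ (Theorem \ref{tE}, Wittwer's theorem), using the Haar--weighted decomposition of Lemma \ref{decomp}, a Carleson--embedding argument, and a separate Bellman function (the one--slot function $b_Q(x,y)=x^\alpha y^\alpha$ in Lemma \ref{caralL}). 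From the dyadic inequality it then \emph{defines} $B$ as the extremal (Bellman) function of that dyadic problem, reads off the size bound \eqref{1.61} and the concavity \eqref{1.62} directly from the definition, and mollifies to get smoothness. Thus the required $B_Q$ is never written down in closed form; its existence is inherited from the sharp dyadic estimate. If you want to keep your Stage 2 as a direct construction you would need to actually exhibit $\Phi$ and verify the Hessian bound on all of $\Omega_Q$---that is an open invitation, not a proof.
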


\begin{proof}
There will be many steps.  But we are going to prove only Theorem \ref{t2} and only for $p=2$. By Theorem \ref{t4} and Rubio de Francia extrapolation this is enough.

\medskip

The operator $T$ is given in the Fourier domain $(\xi_1,\xi_2)$ by the multiplier $\frac{{\zeta}}{\bar\zeta}=
\frac{{\zeta}^2}{|\zeta|^2}= \frac{(\xi_1+i\xi_2)^2}{\xi_1^2 +\xi_2^2} = \frac{\xi_1^2}{\xi_1^2 +\xi_2^2}
-\frac{\xi_2^2}{\xi_1^2 +\xi_2^2}+2i\frac{\xi_1 \xi_2}{\xi_1^2 +\xi_2^2}$. Thus, $T$ can be written as $T=R_1^2 -R_2^2 +2i R_1 R_2$, where $R_1,R_2$ are Riesz transforms on the plane (see \cite{St} for their definition and properties). Another way of writing $T$ is
\[
T=m_1+im_2\,,
\]
where $m_1,m_2$ are Fourier multiplier operators.
Notice that the multipliers themselves (as functions, not as multiplier operators) are connected by
\[
m_2 =m_1\circ \rho\,,
\] 
where $\rho$ is $\pi/4$ rotation of the plane. So the multiplier {\it operators} are related by
\[
m_2 = U_{\rho}\,m_1\,U_{\rho}^{-1}\,,
\]
where $U_{\rho}$ is an operator of $\rho$-rotation in $(x_1,x_2)$ plane. But for any operator $K$ we have
\[
\|U_{\rho}\,K\,U_{\rho}^{-1}\|_{L^2(wdA)\rightarrow L^2(wdA)} = \|K\|_{L^2(w\circ\rho^{-1}dA)\rightarrow L^2(w\circ\rho^{-1}dA)}\,.
\]
Combining this with the fact that $Q_{w,2}^{heat}=Q_{w\circ\rho^{-1},2}^{heat}$ for any rotation, we conclude that we only need the desired estimate of Theorem \ref{t5} for $m_1=R_1^2-R_2^2$.
Actually, we will show that
\begin{equation}
\label{1.2}
\|R_i^2\|_{L^2(wdA)\rightarrow L^2(wdA)} \leq C\, Q_{w,2}^{heat},\,\,\,i=1,2\,.
\end{equation}

\bigskip

To prove \eqref{1.2} we fix, say, $R_1^2$ and two test functions $\f,\psi\in C_0^{\infty}$.
We will be using heat extensions. For $f$ on the plane, its heat extension is given by the formula
\[
f(y,t):=\frac1{\pi\,t}\iint_{\R^2} f(x)\exp(-\frac{|x-y|^2}{t}) \,dx_1 dx_2,\,\,\, (y,t)\in \R_+^3\,.
\]
We usually use the same letter to denote a function and its heat extension.

\begin{lemma}
\label{l1.1}
Let $\f,\psi\in C_0^{\infty}$ . Then the integral $\iiint \frac{\pd\f}{\pd x_1}\cdot \frac{\pd\psi}{\pd x_1}\, dx_1dx_2dt$ converges absolutely and
\begin{equation}
\label{1.13}
\iint R_1^2\f\cdot\psi\, dx_1dx_2 = - 2\iiint \frac{\pd\f}{\pd x_1}\cdot \frac{\pd\psi}{\pd x_1}\, dx_1dx_2dt \,.
\end{equation}
\end{lemma}

\begin{proof}
The proof of this lemma is actually trivial. It is based on the  well-known fact that a function is an integral of its derivative, and also involves Parseval's formula.
Consider $\f,\psi\in C_0^{\infty}$ and  now
\[
\iint \psi R^2_1 \f dx_1 dx_2= \iint\frac{\xi_1^2}{\xi_1^2 +\xi_2^2}\hat{\f}(\xi_1,\xi_2)\hat{\psi}(-\xi_1,-\xi_2) d\xi_1 d\xi_2 =
\]
\[
2 \iint\int_0^{\infty}e^{-2t(\xi_1^2 +\xi_2^2)}\xi_1^2\hat{\f}(\xi_1,\xi_2)\hat{\psi}(\xi_1,\xi_2) d\xi_1 d\xi_2 dt = 
\]
\[
-2\int_0^{\infty} \iint i\xi_1\hat{\f}(\xi_1,\xi_2)e^{-t(\xi_1^2 +\xi_2^2)}\cdot i\xi_1\hat{\psi}(-\xi_1,-\xi_2)e^{-t(\xi_1^2 +\xi_2^2)} d\xi_1 d\xi_2 dt =
\]
\[
-2\int_0^{\infty} \iint \frac{\pd \f}{\pd x_1}(x_1, x_2,t)\frac{\pd \psi}{\pd x_1}(x_1,x_2,t) dx_1dx_2 dt =
\]
\[
-2\iiint_{\R^3_+} \frac{\pd \f}{\pd x_1}(x_1, x_2,t)\frac{\pd \psi}{\pd x_1}(x_1,x_2,t) dx_1dx_2 dt\,.
\]

Above we used Parseval's formula twice, and also we  used the absolute convergence of the integrals 
$$
\iiint_{\R^3_+} e^{-2t(\xi_1^2 +\xi_2^2)}\xi_1^2\hat{\f}(\xi_1,\xi_2)\hat{\psi}(\xi_1,\xi_2) d\xi_1 d\xi_2 dt\,,
$$ 
$$
\iiint_{\R^3_+} \frac{\pd \f}{\pd x_1}(x_1, x_2,t)\frac{\pd \psi}{\pd x_1}(x_1,x_2,t) dx_1dx_2 dt\,.
$$
For the first integral this is obvious. The absolute convergence of the second integral can be easily proved. We leave this as an exercise for the reader .

\end{proof}

\bigskip

Our next goal is to estimate the right side of \eqref{1.13} from above.

\begin{theorem}
\label{1.2}
For any $\f,\psi\in C_0^{\infty}$, and any positive function $w$ on the plane we have
\[
\iiint_{\R^3_+}\left|\frac{\pd\f}{\pd x_1}\right|
\left|\frac{\pd\psi}{\pd x_1}\right|\,dx_1\, dx_2 \,dt\leq A\,Q_{w,2}^{heat}\left(\iint |\f|^2 w \,dx_1\,dx_2 + \iint|\psi|^2\frac{1}{w} \,dx_1 \,dx_2\right)
\]
where $A$ is an absolute constant.
\end{theorem}

\vspace{.1in}

\begin{center}{\bf Bellman function}\end{center}
In the proof we at last use {\bf a Bellman function} tailored for this problem. It is $B$ from the following theorem.  The meaning of $Q$ in the next theorem is $Q:= [w]_{A_2}^{heat}$. 

We use the notation $H_f$ for the Hessian matrix of function $f:\R^k\rightarrow \R$ (the matrix of second derivatives of $f$), and $d^2f$ for the second differential form, which  is the quadratic form $(H_f(x) dx, dx)$, where $(\cdot, \cdot)$  is the usual scalar product in $\R^k$,  $x$ is a point in the domain of definition of $f$, $dx$ is an arbitrary vector in $\R^k$.

\begin{theorem}
\label{1.3}
For any $Q>1$  define the domain  $D_Q:=\{0 < (X,Y,x,y,r,s): \,x^2 < Xs,\, y^2 < Yr,\, 1< rs < Q\}$. Let $K$ be any compact subset of $D_Q$. Then there exists a function $B=B_{Q,K}(X,Y,x,y,r,s)$ infinitely differentiable in a small neighborhood of $K$ such that
\[
1)\,0\leq B\leq 5Q(X+Y)\,,
\]
\[
2)\,-d^2B\geq|dx||dy|\,.
\]
\end{theorem}

We prove Theorem \ref{1.3}  later.
Now we use it to obtain the proof of Theorem \ref{1.2}.

\begin{proof}

Given a non-constant smooth $w$ that is constant outside some large ball, we consider $Q=Q_{w,2}^{heat}$. We treat only the case $w\in A_2$, that is $Q<\infty$, for otherwise there is nothing to prove. Consider two nonnegative functions $\f,\psi\in C_0^{\infty}$. Now  take $B=B_{Q,K}$, where a compact $K$ remains to be chosen.

We are interested in

\[
b(x,t) :=  B((\f^2w)(x,t), (\psi^2w^{-1})(x,t), \f(x,t),\psi(x,t),w(x,t), w^{-1}(x,t))\,.
\]

This is a well defined function, because the choice of $Q$ ensures that the $6$-vector $v$,  consisting of heat extensions of corresponding functions on $\R^2$,
\[
v:= ((\f^2w)(x,t), (\psi^2w^{-1})(x,t), \f(x,t),\psi(x,t),w(x,t), w^{-1}(x,t))
\]
lies in $D_Q$ for any $(x,t) \in \R^3_+$. Also we can fix any compact subset $M$ of the open set $\R^3_+$
and guarantee that for $(x,t)\in M$, the vector $v$ lies in a  compact $K$. In fact, notice that for  our $w$ and  for compactly supported $\f,\psi$ the mapping $(x,t)\rightarrow v(x,t)$ maps compacts in $\R^3_+$ to compacts in $D_Q$. Now just take $K$ large enough.  

The main object we want to study is 

\begin{equation}
\label{svsn}
\left(\frac{\pd}{\pd t} -\Delta\right) b(x,t)\,.
\end{equation}
For simplicity we assume that $B$ is already $C^2$ up to the boundary of $D_Q$. The technical details what to do without this assumption are left to the audience, see \cite{PV}.
We want to estimate the expression in \eqref{svsn} 1) from above {\it in average} and 2) from below in a pointwise way.

\medskip

1) Take a ``slab" $S_{\eps, H}:= \{(x,t)\in \R^3_+: \eps\le t\le H\}$. Notice that for any fixed positive $t$
$$
\int_{\R^2}\Delta b (x,t) dx =0\,.
$$
This is because we assumed $B$ to be smooth and because $v(x,t)\rightarrow 0$ for a fixed $t$ when $x\rightarrow \infty$ rather fast, and the same is true for $\nabla v(x,t)$.
Hence,
$$
\int_{S_{\eps, H}} \left(\frac{\pd}{\pd t} -\Delta\right) b(x,t)\,dxdt = \int_{S_{\eps, H}} \frac{\pd}{\pd t}  b(x,t)\,dxdt = \int_{\R^2} b(x, H)\, dx -\int_{\R^2} b(x, \eps)\, dx\,.
$$
Now we recall that $b=B\circ v$, that $B\ge 0$ (so we can throw away a ``minus" term above), and that $B(X,Y,\dots) \le 5Q (X+Y)$. Then we get (functions below are heat extensions of the corresponding symbols on $\R^2$):
$$
\int_{S_{\eps, H}} \left(\frac{\pd}{\pd t} -\Delta\right) b(x,t)\,dxdt \le
$$
\begin{equation}
\label{sv1}
 5 Q\int_{\R^2}( \f^2w(x, H) +\psi^2 w^{-1}(x, H))\, dx = 5Q \int_{\R^2}[ (\f^2w)(x) +(\psi^2 w^{-1})(x)]\, dx\,.
\end{equation}

2) Now we make a pointwise estimate of \eqref{svsn} from below. The next calculation is simple but it is  key to the proof. In it  as everywhere 
$$
v=((\f^2w)(x,t), (\psi^2w^{-1})(x,t), \f(x,t),\psi(x,t),w(x,t), w^{-1}(x,t))\,.
$$
\begin{lemma}
\label{1.10}
\[
\left(\frac{\pd}{\pd t} -\Delta\right) b(x,t)=\left(\left(-d^2B\right)\frac{\pd v}{\pd x_1},\frac{\pd v}{\pd x_1}\right)_{\R^6} + \left(\left(-d^2B\right)\frac{\pd v}{\pd x_2},\frac{\pd v}{\pd x_2}\right)_{\R^6}\,.
\]
\end{lemma}

\begin{proof}
\[
\frac{\pd}{\pd t} b = (\nabla B, \frac{\pd v}{\pd t})_{\R^6}\,,
\]
\[
\Delta b =\left((d^2B) \frac{\pd v}{\pd x_1}, \frac{\pd v}{\pd x_1}\right)_{\R^6} + \left((d^2B) \frac{\pd v}{\pd x_2}, \frac{\pd v}{\pd x_2}\right)_{\R^6} + (\nabla B, \Delta v)_{\R^6}\,.
\]
We just used the chain rule. Now

\[
\left(\frac{\pd}{\pd t} -\Delta\right) b= \left(\nabla B, (\frac{\pd v}{\pd t}-\Delta v)\right)_{\R^6}-\left((d^2B) \frac{\pd v}{\pd x_1}, \frac{\pd v}{\pd x_1}\right)_{\R^6} - \left((d^2B) \frac{\pd v}{\pd x_2}, \frac{\pd v}{\pd x_2}\right)_{\R^6}\,.
\]
However, the first term is zero because all entries of the vector $v$ are solutions of the heat equation.
\end{proof}

\bigskip

By Theorem \ref{1.3} 
\[
-d^2B\geq|dx||dy|\,.
\]
Therefore, for $(x,t)$ Lemma \ref{1.10} gives:

\begin{equation}
\label{1.35}
\left(\frac{\pd}{\pd t} -\Delta\right) b(x,t)\geq \left|\frac{\pd \f}{\pd x_1}\right|\left|\frac{\pd \psi}{\pd x_1}\right| + \left|\frac{\pd \f}{\pd x_2}\right|\left|\frac{\pd \psi}{\pd x_2}\right|\,.
\end{equation}

\vspace{.1in}

Combining \eqref{sv1} \eqref{1.35} we get
\begin{equation}
\label{1.40}
\iiint_{S_{\eps, H}} \left(\left|\frac{\pd \f}{\pd x_1}\right|\left|\frac{\pd \psi}{\pd x_1}\right| + \left|\frac{\pd \f}{\pd x_2}\right|\left|\frac{\pd \psi}{\pd x_2}\right|
\right) \leq 5Q\,\left( \iint \f^2w + \iint\psi^2 w^{-1}\right)\,.
\end{equation}

Theorem \ref{1.2} is completely proved by using {\bf a Bellman function} of our problem whose existence is claimed in Theorem \ref{1.3}

\end{proof}

Theorem \ref{t5} is proved.

\end{proof}

\noindent{\bf Remark.} The proof of Theorem \ref{1.2} is actually ``equivalent" to  solution of an obstacle problem for a  certain fully non-linear PDE. Consider 
$$
\sigma=\begin{bmatrix} 0,& 1\\1,& 0\end{bmatrix}\,.
$$
Then we need $H_B \pm \sigma\ge 0$ in each point in $D_Q$. As we a looking for the best possible $B$ satisfying these relationships, it is natural that
we should require
$$
\det (H_B +\sigma)=0\,\,\text{or}\,\, \det(H_B-\sigma) =0\,,
$$
these are Monge--Amp\`ere equations. {\bf This approach  is used in \cite{VaVo}, \cite{SSV}. Now we use another method to prove the existence of $B_Q$ required in Theorem \ref{1.3}.}

\subsection{More Bellman functions to prove the existence of Bellman function $B_Q$ from Theorem \ref{1.3}. Dyadic shifts.}
\label{proofexist}

We start with a much simpler ``model" operator---$T_{\sigma}$. The logic will be the following.
We want to get a sharp weighted estimate of $\|T_{\s}\|_{L^2(w)\rightarrow L^2(w)}$ via the $A_2$ characteristic of $w$. 
In the paper
of Nazarov, Treil, Volberg, see \cite{NTV99}, one can find that the norm $\|T_{\s}\|_{L^2(u)\rightarrow L^2(v)}$ is attained on some ``simple" test functions---and that this
holds for every pair $u,v$. Thus also for $u=v=w$. However, on the family $\mathcal{T}$ of test functions one can compute the
$N_{w,2}(T_{\s}):=\sup\{\|T_{\sigma}t\|_{L^2(w)}:\, t\in {\mathcal T},\, \|t\|_{L^2(w)}=1\}$. It turns out that 
\begin{theorem}
\label{Wi}
$N_{w,2}(T_{\s}) \approx
Q_{w,2}^{class}$.
\end{theorem}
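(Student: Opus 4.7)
The plan is to establish Theorem \ref{Wi} by a two-sided comparison, treating the lower and upper bounds rather differently.

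For the lower bound $N_{w,2}(T_\sigma) \gtrsim Q_{w,2}^{class}$, I would exhibit concrete test functions drawn from $\mathcal{T}$ that saturate the classical $A_2$ characteristic. Fix a dyadic cube $Q$ and consider the normalized indicator-type function $t_Q = w^{-1} \mathbf{1}_Q / \|w^{-1}\mathbf{1}_Q\|_{L^2(w)}$. Since $T_\sigma$ is a dyadic shift with nonzero coefficient on $Q$, the output $T_\sigma t_Q$ has a component of size $\ave{w^{-1}}_Q$ supported on a child (or sibling) of $Q$. Computing $\|T_\sigma t_Q\|_{L^2(w)}^2$ then produces a factor $\ave{w}_Q \ave{w^{-1}}_Q^2 / \ave{w^{-1}}_Q = \ave{w}_Q \ave{w^{-1}}_Q$, whose supremum over dyadic $Q$ is (up to absolute constants) $Q_{w,2}^{class}$. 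Taking the supremum over $Q$ gives the desired lower bound.

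For the upper bound $N_{w,2}(T_\sigma) \lesssim Q_{w,2}^{class}$, I would run a dyadic Bellman function argument, parallel in spirit to the heat-extension Bellman function of Theorem \ref{1.3} but now formulated on dyadic averages rather than heat extensions. The natural state vector at a dyadic cube $R$ is
\[
v(R) = \bigl(\ave{f^2 w}_R,\; \ave{g^2 w^{-1}}_R,\; \ave{f}_R,\; \ave{g}_R,\; \ave{w}_R,\; \ave{w^{-1}}_R\bigr),
\]
living in the domain
\[
D_Q = \{(X,Y,x,y,r,s) : x^2 < Xs,\; y^2 < Yr,\; 1 < rs < Q\},
\]
with $Q = Q_{w,2}^{class}$. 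I would construct $B = B_Q$ on $D_Q$ satisfying the size bound $0 \le B \le CQ(X+Y)$ and the concavity property that $B(v(R)) - \tfrac12\bigl(B(v(R_+)) + B(v(R_-))\bigr) \gtrsim |\Delta_R f|\,|\Delta_R g|$, where $R_\pm$ are the dyadic children of $R$ and $\Delta_R$ denotes the Haar difference. Summing this telescoping inequality over all dyadic cubes descending from a large top cube $Q_0$ yields
\[
\sum_R |\Delta_R f|\,|\Delta_R g| \lesssim Q_{w,2}^{class}\Bigl(\int f^2 w + \int g^2 w^{-1}\Bigr),
\]
which, after expressing the bilinear form $\langle T_\sigma f, g\rangle$ as such a sum over Haar coefficients, gives the sharp upper estimate with constant $C Q_{w,2}^{class}$.

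The main obstacle, as expected, is the construction of the Bellman function $B_Q$ on $D_Q$ with the correct concavity against the rank-one forms encoding $|dx|\,|dy|$. I would follow the strategy indicated in the paper: either solve the associated obstacle problem for a Monge--Amp\`ere equation $\det(H_B \pm \sigma) = 0$ as in \cite{VaVo,SSV}, or write $B_Q$ as an explicit expression (e.g.\ built from $\sqrt{Xs - x^2}\sqrt{Yr - y^2}$ together with corrective terms) and verify the differential inequality by hand. Once $B_Q$ is in place, the rest of the argument — the Haar expansion of $\langle T_\sigma f, g\rangle$, the telescoping summation, and the matching lower bound — is essentially bookkeeping. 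The extrapolation of this result from test functions to arbitrary $f \in L^2(w)$ is then supplied by the deep fact from \cite{NTV99} that for the shift $T_\sigma$ the full norm is attained (up to an absolute constant) on $\mathcal{T}$.
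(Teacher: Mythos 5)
You should first note that the paper does not reprove Theorem~\ref{Wi}: it attributes the statement entirely to Wittwer \cite{Wi}, whose argument directly computes $\|T_\sigma t\|_{L^2(w)}$ on the simple test functions in $\mathcal{T}$ and then invokes the Nazarov--Treil--Volberg theorem \cite{NTV99} to pass from the test-function supremum to the full operator norm. Against that background, your plan has two difficulties. The specific lower-bound computation is not right: the Haar coefficient of $w^{-1}\mathbf{1}_Q$ at $Q$ equals $\tfrac{1}{2}|Q|^{1/2}\bigl(\langle w^{-1}\rangle_{Q_+}-\langle w^{-1}\rangle_{Q_-}\bigr)$, a Haar \emph{difference} which can vanish, not the average $\langle w^{-1}\rangle_Q$ that you claim appears as an output component; saturating the product $\langle w\rangle_Q\langle w^{-1}\rangle_Q$ on a single dyadic shift requires a more careful accumulation over descendants of $Q$ (or a bilinear testing against a second test function), as Wittwer actually carries out.

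Second, your upper bound constructs the six-variable Bellman function $B_Q$ and proves the full operator bound $\|T_\sigma\|\lesssim Q_{w,2}^{class}$; that is Theorem~\ref{tE} and strictly more than what Theorem~\ref{Wi} asserts, which is a bound on test functions only. The Bellman route is legitimate --- and the paper's remark acknowledges it is carried out via Monge--Amp\`ere in \cite{VaVo,SSV} --- but it is neither Wittwer's proof of Theorem~\ref{Wi} nor the paper's own proof of Theorem~\ref{tE}. The paper proves Theorem~\ref{tE} without ever building $B_Q$: it decomposes $h_I=\alpha_I h^w_I+\beta_I\chi_I/\sqrt{|I|}$ into a weighted Haar system (Lemma~\ref{decomp}), controls the resulting four paraproduct-type sums by a Bellman-proved Carleson embedding for the sequence $\mu_I$ (Lemma~\ref{caralL}) together with the weighted dyadic maximal function, and only \emph{afterwards} deduces the existence of $B_Q$ from Theorem~\ref{tE}. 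So in the paper's logic $B_Q$ is an output, not an input; using $B_Q$ directly, as you propose, pushes the entire weight of the problem onto an independent construction of $B_Q$, which is exactly the obstacle you flag at the end. The closing remark about ``extrapolation to arbitrary $f$'' is also misplaced: once the Bellman upper bound is established for all $f$, there is nothing left to extrapolate, and Theorem~\ref{Wi} itself is solely about test functions.
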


 J. Wittwer does that in \cite{Wi} basing her approach on \cite{NTV99}; see also
\cite{WiP}. Thus, we get $\|T_{\s}\|_{L^2(w)\rightarrow L^2(w)}= N_{w,2}(T_{\s}) \approx Q_{w,2}^{class}$.

\bigskip

So let us show what the model operator is, what  its sharp weighted estimate is, and how one obtains a special function (Bellman function) from this estimate.

\bigskip

Consider the family of dyadic singular operators $T_{\s}$:
\[
T_{\s}f = \Sigma_{I\in {\mathcal D}} \s(I) \,(f, h_I)\, h_I\,.
\]
Here ${\mathcal D}$ is a dyadic lattice on $\R$, $h_I$ is a Haar function associated with the dyadic interval 
$I$ ($h_I$ is normalized in $L^2(\R,dx)$), and $\s(I) = \pm 1$. We call the family $T_{\s}$ {\it the martingale transform}. It is a
dyadic analog of a Calder\'on-Zygmund operator. Here are important questions about $T_{\s}$, the first one about two-weight estimates
and the second one about one weight estimates:

\medskip

1) What are necessary and sufficient conditions for $\sup_{\s} \|T_{\s}\|_{L^2(u)\rightarrow L^2(v)} <\infty$? 

\medskip
2) What is the sharp bound on $\sup_{\s} \|T_{\s}\|_{L^2(w)\rightarrow L^2(w)}$ in terms of $w$? 
How can one compute $\sup_{\s} \|T_{\s}\|_{L^2(w)\rightarrow L^2(w)}$? 

\bigskip

These questions are dyadic analogs of notoriously difficult questions about ``classical" Calder\'on-Zygmund operators like the Hilbert transform, the Riesz transforms and the Ahlfors-Beurling transform. The dyadic model is supposed to be easier than the continuous one. This turned out to be true. The answers to the questions above appeared in \cite{NTV99}, \cite{Wi}. Moreover these answers are key to answering questions about ``classical" Calder\'on-Zygmund operators.

Strangely enough, the answer to the second question (which seems to be easier, because it is about ``one weight") seems to require the ideas from the  ``two-weight" case.  Here is our explanation of this phenomena. The necessary and sufficient conditions on $(u,v)$ to answer  the first question were given in \cite{NTV99}. They amount to the fact that $\sup_{\s} \|T_{\s}\|_{L^2(u)\rightarrow L^2(v)}$ is almost attained on the family of simple test functions. This fact has beautiful consequences in the one weight case. For then $\sup_{\s} \|T_{\s}\|_{L^2(w)\rightarrow L^2(w)}$ is attainable (almost) on the family of simple test functions. One may try to compute $\sup_{\s} \|T_{\s} \,t\|_{L^2(w)}$ for every element of this test family, thus getting a good estimate for the norm $\sup_{\s} \|T_{\s}\|_{L^2(w)\rightarrow L^2(w)}$. Test functions are rather simple, so this program can be carried out. This has been done in Wittwer's paper \cite{Wi}. We will give another proof below.
Here is the result. Recall that 
\[
Q_{w,2}^{dyadic} := \sup_{I\in {\mathcal D}}\langle w\rangle_I \langle w^{-1}\rangle_I\,.
\]

\begin{theorem}
\label{tE}
\[
\sup_{\s} \|T_{\s}\|_{L^2(w)\rightarrow L^2(w)}\leq A \, Q_{w,2}^{dyadic}\,.
\]
\end{theorem}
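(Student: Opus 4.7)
The plan is to carry out a dyadic Bellman function argument, parallel to the proof of Theorem~\ref{1.2} in the previous section, but with the heat extension replaced by the exact martingale identity $\langle f\rangle_I=\tfrac12(\langle f\rangle_{I_+}+\langle f\rangle_{I_-})$ on the children of $I$. By duality between $L^2(w)$ and $L^2(w^{-1})$ and the fact that $|\sigma(I)|=1$, the claim reduces to the bilinear estimate
$$
\sum_{I\in\mathcal{D}}|(f,h_I)(g,h_I)|\le A\,Q\,\|f\|_{L^2(w)}\|g\|_{L^2(w^{-1})},\qquad Q:=Q_{w,2}^{dyadic}.
$$
Writing $(f,h_I)=\tfrac{|I|^{1/2}}{2}\Delta_I F$ with $\Delta_I F:=\langle f\rangle_{I_+}-\langle f\rangle_{I_-}$, and similarly for $g$, the homogenization $f\mapsto tf,\ g\mapsto t^{-1}g$ further reduces matters to
$$
\sum_I |I|\,|\Delta_I F|\,|\Delta_I G|\le C\,Q\,\bigl(\|f\|_{L^2(w)}^2+\|g\|_{L^2(w^{-1})}^2\bigr).
$$

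\textbf{Bellman variables and telescoping.} I would attach to each dyadic $I$ the vector
$$
v_I=\bigl(\langle f\rangle_I,\,\langle g\rangle_I,\,\langle f^2w\rangle_I,\,\langle g^2w^{-1}\rangle_I,\,\langle w\rangle_I,\,\langle w^{-1}\rangle_I\bigr)\in\mathbb{R}^6.
$$
By Cauchy--Schwarz (writing $|f|=|fw^{1/2}|\cdot w^{-1/2}$) and the dyadic $A_2$ bound, $v_I$ lies in the convex domain
$$
\Omega_Q=\{(F,G,X,Y,r,s):\ F^2\le Xs,\ G^2\le Yr,\ 1\le rs\le Q\},
$$
and the vectors satisfy $v_I=\tfrac12(v_{I_+}+v_{I_-})$ coordinatewise. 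The goal is to produce a function $B:\Omega_Q\to[0,\infty)$ with (a) $0\le B\le C\,Q\,(X+Y)$, and (b) the discrete concavity
$$
B(v)-\tfrac12\bigl(B(v_+)+B(v_-)\bigr)\ge|\Delta F|\,|\Delta G|
$$
for every admissible $v=\tfrac12(v_++v_-)$ with $v,v_\pm\in\Omega_Q$ (here $\Delta F=(F_+-F_-)/2$, etc.). Iterating (b) through the subtree of a large interval $I_0$ and using $B\ge 0$ at the leaves yields
$$
|I_0|\,B(v_{I_0})\ge c\sum_{I\subset I_0}|I|\,|\Delta_I F|\,|\Delta_I G|,
$$
while (a) at the root gives $|I_0|\,B(v_{I_0})\le C\,Q\int_{I_0}(f^2w+g^2w^{-1})$. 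Letting $I_0\uparrow\mathbb{R}$ closes the estimate, and inserting the identity $|(f,h_I)(g,h_I)|=\tfrac{|I|}{4}|\Delta_I F||\Delta_I G|$ together with the homogenization gives Theorem~\ref{tE}.

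\textbf{Main obstacle: existence of $B$.} The heart of the proof is producing such a $B$. A conceptual route is to define $B(v)$ as the supremum of $\sum(|I|/|I_0|)|\Delta_I F|\,|\Delta_I G|$ over all finite dyadic ``splittings'' whose root martingale data equal $v$: then (b) and $B\ge 0$ hold by construction, and the real work is the sharp linear-in-$Q$ upper bound (a). A more explicit route is an algebraic ansatz on $\Omega_Q$ of the rough shape
$$
B(F,G,X,Y,r,s)=\tau(Q)(X+Y)-\rho(rs)\,\sqrt{(Xs-F^2)(Yr-G^2)}
$$
with $\tau=O(Q)$ and a judiciously chosen $\rho$, verifying (b) via a finite-difference computation that is the discrete analogue of the Hessian condition $-d^2B\succeq|dF|\,|dG|$ of Theorem~\ref{1.3}. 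Establishing (b) throughout $\Omega_Q$ while keeping (a) at the sharp linear dependence on $Q$ is the main algebraic obstacle; the telescoping, Cauchy--Schwarz and duality pieces that surround it are routine.
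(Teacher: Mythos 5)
There is a genuine gap, and it is worth seeing clearly why. Your Bellman framework is correct as a \emph{framework}: attaching the $6$-vector $v_I$ to each $I$, the discrete concavity (b), the telescoping through the dyadic tree, and the use of (a) at the root is indeed the standard machine, and it is exactly the machine the paper deploys \emph{after} Theorem \ref{tE} is established (to manufacture the $6$-variable Bellman function $B_{Q,K}$ of Theorem \ref{1.3} via the supremum definition and mollification). But your proposal never actually closes the loop: the ``conceptual route'' (defining $B$ as a supremum over admissible extremizers) makes the upper bound $B\le CQ(X+Y)$ logically equivalent to the dyadic estimate you are trying to prove, so it is circular; and the ``explicit ansatz'' $B=\tau(Q)(X+Y)-\rho(rs)\sqrt{(Xs-F^2)(Yr-G^2)}$ is left entirely unverified. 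You are candid that ``(a) at the sharp linear dependence on $Q$ is the main algebraic obstacle,'' but that obstacle \emph{is} the theorem; no explicit six-variable Bellman function achieving the linear-in-$Q$ bound is produced in the paper, and producing one is known to be hard.

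The paper instead takes a route that sidesteps that construction altogether. It decomposes each Haar function $h_I=\alpha_I h_I^w+\beta_I\chi_I/\sqrt{|I|}$ where $\{h_I^w\}$ is an orthonormal system in $L^2(w)$ (Lemma \ref{decomp}), expands $(\sha\,\phi w,\psi\sigma)$ into four sums $I+II+III+IV$, and then: term $I$ is handled trivially by orthonormality and a single $[w]_{A_2}^{1/2}$ factor; terms $II,III,IV$ are controlled by a Carleson embedding (Lemma \ref{carl1}) applied to the sequence $\mu_I=(\langle w\rangle_I\langle\sigma\rangle_I)^\alpha\bigl(\frac{(\Delta_I w)^2}{\langle w\rangle_I^2}+\frac{(\Delta_I\sigma)^2}{\langle\sigma\rangle_I^2}\bigr)|I|$, combined with bounds for the dyadic weighted maximal function. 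The only Bellman function that actually appears in the proof of Theorem \ref{tE} is the two-variable $b_Q(x,y)=x^\alpha y^\alpha$ on $\{1<xy\le Q\}$, used solely to establish the Carleson property of $\{\mu_I\}$ with constant $O([w]_{A_2}^\alpha)$ (Lemma \ref{caralL}). This is a much lower-dimensional and entirely explicit object, which is precisely what makes the argument go through. To repair your proposal you would either have to exhibit and verify a concrete six-variable $B$ satisfying both (a) and (b) with the right $Q$-dependence --- a substantial additional piece of work your sketch does not attempt --- or switch to the paper's decomposition-plus-Carleson route.
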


\vspace{.1in}

\noindent{\bf Remark.} We will postpone the proof of Theorem \ref{tE} (another use of {\bf Bellman function technique}), here we will use it first to finish the proof of the 
existence of $B_Q$ claimed in Theorem \ref{1.2}.

\bigskip

So we assume now that Theorem \ref{tE} is already proved. Let us rewrite Theorem \ref{tE} as follows

\[
\sup_{\s(I)=\pm 1} \left|\Sigma_{I\in {\mathcal D}} \s(I) \,(f,h_I)\, (g,h_I)\right| \leq A
Q_{w,2}^{dyadic}\|f\|_{L^2(w)}\|g\|_{L^2(w^{-1})}\,,
\]

or

\[
\Sigma_{I\in {\mathcal D}} |(f,h_I)|\, |(g,h_I)| \leq A Q_{w,2}^{dyadic}\|f\|_{L^2(w)}\|g\|_{L^2(w^{-1})}\,.
\]

This inequality is scaleless, so we write it as 

\begin{equation}
\label{1.60}
J\in {\mathcal D},\,\, \frac1{4|J|}\Sigma_{I\in {\mathcal D},\,I\subset J} |\langle f\rangle_{I_-}-\langle f\rangle_{I_+}|\, |\langle
g\rangle_{I_-}-\langle g\rangle_{I_+}||I| \leq A Q_{w,2}^{dyadic}\langle f^2w\rangle_{J}^{1/2}\langle g^2/w\rangle_{J}^{1/2}\,.
\end{equation} 

Here $I_-,I_+$ are the left and the right halves of $I$, and $\langle\cdot\rangle_l$ means averaging over $l$ as usual. 
Given a fixed $J\in {\mathcal D}$ and a number $Q>1$, we wish to introduce the Bellman function of \eqref{1.60}:

\[
\aligned
B(X,Y,x,y,r,s) &= \sup\{\frac1{4|J|}\Sigma_{I\in {\mathcal D},\,I\subset J} |\langle f\rangle_{I_-}-\langle f\rangle_{I_+}|\, 
|\langle g\rangle_{I_-}-\langle g\rangle_{I_+}||I| :\\
\langle f\rangle_J&=x,\,\langle g\rangle_J = y,\,\langle w\rangle_J =r,\, \langle w^{-1}\rangle_J=s,\\
\langle f^2w\rangle_J&=X,\, \langle g^2/w\rangle_J=Y,\, w\in A_2^{dyadic},\, Q_{w,2}^{dyadic}\leq Q \}\,.
\endaligned
\]

Obviously, the function $B$ does not depend on $J$, but it does depend on $Q$. Its domain of definition is the following:
\[
R_Q := \{ 0\leq (X,Y,x,y,r,s), \, x^2 \leq Xs,\, y^2\leq Yr, 1\leq rs\leq Q\}\,.
\]

By \eqref{1.60} it satisfies 

\begin{equation}
\label{1.61}
0\leq B\leq AQ X^{1/2}Y^{1/2}\,.
\end{equation}

We are going to prove that it also satisfies the following ``differential" inequality. Denote $v:=(X,Y,x,y,r,s)$, $v_-=(X_-,Y_-,x_-,y_-,r_-,s_-)$, $v_+=(X_+,Y_+,x_+,y_+,r_+,s_+)$, let $v,v_+,v_-$ lie in $R_Q$,  and let $v=\frac12(v_{-}+v_+)$. Then

\begin{equation}
\label{1.62}
B(v)-\frac12\left(B(v_+) + B(v_-)\right) \geq |x_{+}  - x_-||y_{+}  - y_-|
\,.
\end{equation}

In fact, let $f,g,w$ almost maximize $B(v)$ (on the interval $J$), let $f_+,g_+,w_+$ do this for $B(v_+)$, $f_-,g_-,w_-$ do this for $B(v_-)$. The freedom of scale for $B$ allows us to put $f_+,g_+,w_+$  on $J_+$ and $f_-,g_-,w_-$ on $J_-$. Then we have  ``gargoyle" functions 
\[
F=
\begin{cases} f_+ & \text{on} \,J_+\\
f_- & \text{on} \, J_-
\endcases \qquad  G=
\cases g_+ & \text{on} \,J_+\\
g_- & \text{on}\, J_-
\endcases \qquad W =
\cases w_+ & \text{on} \,J_+\\
w_- & \text{on} \, J_-\,.
\end{cases} \qquad 
\]
Obviously, $\langle F\rangle_J=\frac12(x_+ +x_-) =x,\,\langle G\rangle_J = y,\,\langle W\rangle_J = r,\, \langle W^{-1}\rangle_J = s,\,\langle F^2W\rangle_J = X,\,\langle G^2W^{-1}\rangle_J=Y$. These numbers together form the vector $v$. In other words $F,G,W$ compete with $f,g,w$ in the definition \eqref{1.60} of Bellman function $B(v)$. By this definition,
\[
B(v) \geq \frac1{4|J|}\Sigma_{I\in {\mathcal D},\,I\subset J} |\langle F\rangle_{I_-}-\langle F\rangle_{I_+}|\, |\langle
G\rangle_{I_-}-\langle G\rangle_{I_+}||I|\,.
\]

But the almost optimality of $f_+,g_+,w_+$ on $J_+$ and $f_-,g_-,w_-$ on $J_-$ gives us (recall that $F=f_\pm$ on 
$J_\pm$, $G=g_\pm$ on $J_\pm$):
\[
B(v_+)\leq \varepsilon + \frac1{4|J_+|}\Sigma_{I\in {\mathcal D},\,I\subset J_+} |\langle F\rangle_{I_-}-\langle F\rangle_{I_+}|\,
|\langle G\rangle_{I_-}-\langle G\rangle_{I_+}||I|\,,
\]
and

\[
B(v_-)\leq \varepsilon + \frac1{4|J_-|}\Sigma_{I\in {\mathcal D},\,I\subset J_-} |\langle F\rangle_{I_-}-\langle F\rangle_{I_+}|\,
|\langle G\rangle_{I_-}-\langle G\rangle_{I_+}||I|\,.
\]

Combining these, we get

\[
\aligned
B(v)&-\frac12(B(v_+)+B(v_-)) \geq -2\varepsilon + \frac14|\langle F\rangle_{J_-}-\langle F\rangle_{J_+}|\, |\langle G\rangle_{J_-}-\langle G\rangle_{J_+}\\
= -2\varepsilon +& \frac14|\langle f_-\rangle_{J_-}-\langle f_+\rangle_{J_+}|\, |\langle g_-\rangle_{J_-}-\langle g_+\rangle_{J_+} = -2\varepsilon +\frac14|x_--x_+||y_--y_+|\,.
\endaligned
\]

We are done with \eqref{1.62} because $\varepsilon$ is an arbitrary positive number. Therefore, our $B$ is a very concave function. We are going to modify $B$  to have its Hessian satisfy the conclusion of Theorem \ref{1.3}. To do that we fix a compact $K$ in the interior of $R_Q$, and we choose $\varepsilon$ such that $100 \varepsilon < dist(K, \pd R_Q)$. Consider the convolution of $B$ with $\frac1{\varepsilon^6}\f(\frac{v}{\varepsilon}), \, v\in\R^6$, where $\f$ is a bell shape infinitely differentiable function with support in the unit ball of $\R^6$. It is now very easy to see that this convolution (we call it $B_{K,Q}$) satisfies the following inequalities

\begin{equation}
\label{1.63}
0\leq B_{K,Q} \leq 6Q(X+Y)\,,
\end{equation}
and
for any vector $\xi=(\xi_1,\xi_2,\xi_3,\xi_4,\xi_5,\xi_6)\in\R^6$,
\begin{equation}
\label{1.64}
-(d^2B_{K,Q}\xi,\xi)_{\R^6} \geq 2|\xi_2||\xi_3|\,.
\end{equation}

\medskip

\noindent The factor $2$ appears because $B(v)-\frac12(B(v_+)+B(v_-))$ in \eqref{1.62} corresponds to $-\frac12 d^2B$, and $|x-x_+|=\frac12 |x_--x_+|$
(the same being valid with $y$'s replacing $x$'s and $-$ replacing $+$).

\medskip

\noindent Theorem \ref{1.3} is completely proved modulo the proof of Theorem \ref{tE}.

\vspace{.2in}

\begin{proof}[Proof of Theorem \ref{tE}]

To prove Theorem \ref{tE} we need the  following decomposition:
\begin{lemma}
\label{decomp}
\begin{equation}
\label{hI}
h_I = \al_I h_I^w + \beta_I \frac{\chi_I}{\sqrt{I}}\,,
\end{equation}
where

1)  $|\alpha_I| \le \sqrt{\langle w\rangle_I}$,

2)$ |\beta_I| \le \frac{|\Delta_I w|}{ \langle w\rangle_{I}}$, where $\Delta_I w:= \langle w\rangle_{I_+}-\langle w\rangle_{I_+}$,

3) $\{h_I^w\}_{I} $ is an orthonormal basis in $L^2(w)$,

4) $h_I^w$ assumes on $I$ two constant values, one on $I_+$ and another on $I_-$.
\end{lemma}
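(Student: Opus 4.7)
The plan is to first construct $h_I^w$ explicitly on $I$ as a step function on $\{I_-,I_+\}$, then to observe that $h_I$ lies in the two-dimensional space of such step functions (which is spanned, in the $L^2(w)$ inner product, by $h_I^w$ and $\chi_I/\sqrt{|I|}$), and finally to compute the coefficients $\alpha_I,\beta_I$ by $L^2(w)$-orthogonal projection and verify the claimed size estimates. Writing $h_I^w = u_-\chi_{I_-} + u_+\chi_{I_+}$, the two requirements ``mean zero against $w$'' and ``unit $L^2(w)$ norm'' produce two equations in $u_\pm$; solving gives (up to a sign) $u_- = \sqrt{w(I_+)/(w(I_-)w(I))}$ and $u_+=-\sqrt{w(I_-)/(w(I_+)w(I))}$ where $w(E):=\int_E w\,dx$. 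Claim 4) is then immediate, and claim 3) reduces to the standard weighted-Haar argument: for $I\ne I'$ the supports are either disjoint (trivially orthogonal) or nested, say $I'\subsetneq I$, in which case $h_I^w$ is constant on $I'$ while $h_{I'}^w$ is mean-zero against $w$ on $I'$, so $\int h_I^w h_{I'}^w w = 0$; density of dyadic step functions in $L^2(w)$ completes the basis claim.

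For the decomposition \eqref{hI}, I would use that the two-dimensional space of functions constant on each of $I_\pm$ contains $h_I$, $h_I^w$ and $\chi_I/\sqrt{|I|}$, and that $h_I^w \perp \chi_I/\sqrt{|I|}$ in $L^2(w)$ by the mean-zero property of $h_I^w$. Hence $\{h_I^w,\chi_I/\sqrt{|I|}\}$ is an $L^2(w)$-orthogonal basis of that space, so $h_I=\alpha_I h_I^w+\beta_I\chi_I/\sqrt{|I|}$ with
\begin{equation*}
\alpha_I = \int h_I\, h_I^w\, w\,dx,\qquad \beta_I = \frac{1}{\langle w\rangle_I}\int h_I\,\frac{\chi_I}{\sqrt{|I|}}\, w\,dx.
\end{equation*}

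The two coefficients are then computed explicitly. For $\beta_I$, the integral in the numerator equals $(w(I_-)-w(I_+))/|I|=\tfrac12(\langle w\rangle_{I_-}-\langle w\rangle_{I_+})$, and dividing by $\langle w\rangle_I$ yields $|\beta_I|=|\Delta_I w|/(2\langle w\rangle_I)$, a fortiori the claimed bound 2). For $\alpha_I$, substituting the values of $u_\pm$ gives $\alpha_I = (2/\sqrt{|I|})\sqrt{w(I_-)w(I_+)/w(I)}$, so that the bound 1) becomes
\begin{equation*}
\alpha_I^2 = \frac{4\, w(I_-)\,w(I_+)}{|I|\,w(I)} \leq \frac{w(I)}{|I|} = \langle w\rangle_I,
\end{equation*}
which is equivalent to $4w(I_-)w(I_+)\le(w(I_-)+w(I_+))^2$, i.e.\ AM--GM. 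I do not anticipate any substantive obstacle; the entire argument is elementary linear algebra in a two-dimensional space, and the only step worth naming is the AM--GM reduction that makes the estimate for $\alpha_I$ come out sharp at the symmetric case $w(I_-)=w(I_+)$.
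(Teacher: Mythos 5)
Your proof is correct and uses the same underlying idea as the paper — orthogonal decomposition in $L^2(w)$ of the two-dimensional space of functions constant on $I_\pm$ — but you take a more explicit route to the bound on $\alpha_I$. The paper applies $\|\cdot\|_{L^2(w)}^2$ to both sides of \eqref{hI} to get the Pythagorean identity $\langle w\rangle_I = \alpha_I^2 + \beta_I^2\langle w\rangle_I$, from which 1) follows immediately without ever writing down $h_I^w$ or $\alpha_I$; you instead compute $\alpha_I$ in closed form and reduce 1) to AM--GM, $4\,w(I_-)w(I_+)\le (w(I_-)+w(I_+))^2$. Both give the claim; the paper's step is a one-liner and avoids the explicit formula for $h_I^w$, while your computation carries a little extra information (the bound is saturated exactly when $w(I_-)=w(I_+)$). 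The calculation of $\beta_I$ is identical in both.
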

\begin{proof}
To find $\al, \beta$ we first apply $\|\cdot\|_{L^2(w)}^2$ to both parts of \eqref{hI}: $\langle w\rangle _I=\|h_I\|_{L^2(w)}^2 = \al^2 +\beta^2 \langle w\rangle_I$, and secondly we multiply 
\eqref{hI} by $\chi_I/\sqrt{|I|}$ and integrate with respect to $w\,dx$: $\frac12(\langle w\rangle_{I_+}-\langle w\rangle_{I_+}) = \beta_I\langle w\rangle_I$. Clearly Lemma is proved.

\end{proof}

Now let 
$$
\sha F:=\sum_I c_I(f, h_I)\,h_I\,, \,\,\text{where constants}\,\, c_I\,\,\text{are such that}\,\, |c_I|\le 1\,.
$$

Let $\sigma:= w^{-1}$ for the rest of the proof. Fix $\phi\in L^2(w), \psi\in L^2(\sigma)$. We need to prove
\begin{equation}
\label{main}
|(\sha\,\,\phi w,\psi\sigma)|\le C\, \|\phi\|_{w}\|\psi\|_{\sigma}\,.
\end{equation}

\vspace{.2in}

We  estimate $(\sha\,\,\phi w, \psi \sigma)$ as

$$
|\sum_{I} c_I(\phi w, h_I)(\psi \sigma,h_I)|\le 
$$
$$
\sum_{I} |c_I (\phi w, h^w_I)\sqrt{\langle w \rangle_I}(\psi \sigma,h^{\sigma}_I)|\sqrt{\langle \sigma \rangle_I}|\,+
$$
$$
\sum_I |c_I \langle \phi w\rangle_I\frac{\Delta_I w}{\langle w \rangle_I}(\psi \sigma,h^{\sigma}_I)\sqrt{\langle \sigma \rangle_I}\sqrt{I}|\,+
$$
$$
\sum_I |c_I \langle \psi \sigma\rangle_J\frac{\Delta_I\sigma}{\langle \sigma \rangle_I}(\phi w,h^{w}_I)\sqrt{\langle w \rangle_I}\sqrt{I}|\,+
$$
$$
\sum_I|c_I \langle \phi w\rangle_I\langle \psi \sigma\rangle_J \frac{\Delta_I w}{\langle w \rangle_I} \frac{\Delta_I\sigma}{\langle \sigma \rangle_I}\sqrt{I}\sqrt{I}|=: I + II +III +IV\,.
$$

So we have
$$
I \le  \sum_I (\phi w, h^w_I)\sqrt{\langle w \rangle_I}  \cdot (\psi \sigma, h^\sigma_I)\sqrt{\langle \sigma \rangle_I}
,\,
II \le \sum_I(\phi w, h^w_I)\sqrt{\langle w \rangle_I} \cdot \langle \psi \sigma\rangle_I  \frac{|\Delta_I \sigma|}{\langle \sigma \rangle_I}\sqrt{|I|},\,\,
\,
$$
$$
III\le \sum_I\langle \phi w\rangle_I \frac{|\Delta_I w|}{\langle w \rangle_I}\sqrt{|I|} \cdot (\psi \sigma, h^\sigma_I)\sqrt{\langle \sigma \rangle_I}
,\,\,\,
IV\le  \sum_I \langle \phi w\rangle_I \frac{|\Delta_I w|}{\langle w \rangle_I}\sqrt{|I|} \cdot  \langle \psi \sigma\rangle_I  \frac{|\Delta_I \sigma|}{\langle \sigma \rangle_I}\sqrt{|I|}\,.
$$

The estimate of $I$ is trivial because $h^w_I$, $h^\sigma_I$ are orthonormal systems in $L^2(w), L^2(\sigma)$ correspondingly:
\begin{equation}
\label{Iest}
I \le \sup_I\sqrt{\langle w\rangle_I\langle \sigma\rangle_I}\sqrt{\sum_I (\phi w, h^w_I)^2}\sqrt{\sum_I (\psi \sigma, h^\sigma_I)^2} \le [w]_{A_2}^{1/2} \|\phi\|_w\|\psi\|_{\sigma}\,.
\end{equation}

\bigskip

To estimate the rest let us fix $\al\in (0, 1/2)$ and introduce
\begin{equation}
\label{caral}
\mu_I:= \langle w\rangle_I^\al \langle \sigma\rangle_I^\al\bigg(\frac{|\Delta_I w|^2}{\langle w\rangle_I^2} + \frac{|\Delta_I \sigma|^2}{\langle \sigma\rangle_I^2} \bigg)|I|\,.
\end{equation}

We are going to give {\bf a Bellman  function} proof of the following lemma.

\bigskip

\begin{lemma}
\label{caralL}
The sequence $\{\mu_I\}_{I\in D}$ is a Carleson sequence with Carleson constant at most $C\, [w]_{A_2}^{\al}$.
\end{lemma}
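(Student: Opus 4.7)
The plan is to realize the Carleson property as a Bellman induction on the dyadic tree, using the simple homogeneous candidate
$B(r,s) := r^{\alpha} s^{\alpha}$
on the positive quadrant. Writing $Q := [w]_{A_2}$ (the dyadic $A_2$ characteristic) and noting that $\langle w\rangle_I \langle \sigma\rangle_I \le Q$ for every dyadic $I$, I immediately get the size bound $0 \le B(r,s) \le Q^{\alpha}$ on the relevant averages; this is exactly what will produce the factor $[w]_{A_2}^{\alpha}$ in the Carleson constant.

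The heart of the matter is the concave-gap inequality
\begin{equation*}
B(r,s) - \tfrac{1}{2}\bigl(B(r_{+}, s_{+}) + B(r_{-}, s_{-})\bigr) \ge c(\alpha)\, r^{\alpha} s^{\alpha} \Bigl(\frac{(\Delta r)^{2}}{r^{2}} + \frac{(\Delta s)^{2}}{s^{2}}\Bigr),
\end{equation*}
valid for all positive $r_{\pm}, s_{\pm}$ with $r=(r_++r_-)/2$, $s=(s_++s_-)/2$, $\Delta r = r_+-r_-$, $\Delta s = s_+-s_-$. A direct Hessian computation gives
\begin{equation*}
-d^{2}B = r^{\alpha}s^{\alpha}\Bigl(\alpha(1-\alpha)\bigl[(dr/r)^{2}+(ds/s)^{2}\bigr] - 2\alpha^{2}(dr/r)(ds/s)\Bigr);
\end{equation*}
the associated $2\times 2$ matrix has eigenvalues $\alpha$ and $\alpha(1-2\alpha)$, strictly positive exactly because $\alpha\in(0,1/2)$. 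The finite-difference form will follow by integrating $-d^{2}B$ along the segment from $v_{-}=(r_{-},s_{-})$ to $v_{+}=(r_{+},s_{+})$, which stays in the open positive quadrant where $B$ is smooth and concave; a scaling reduction to $g(u,v):=1-\tfrac{1}{2}[(1+u)^{\alpha}(1+v)^{\alpha}+(1-u)^{\alpha}(1-v)^{\alpha}]$ on $(-1,1)^{2}$ plus a compactness check that $g(u,v)/(u^{2}+v^{2})$ is bounded below gives the uniform constant $c(\alpha)$.

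With the concave-gap inequality in hand I would run the standard Bellman telescope. Applied at each dyadic $I \subset J$ with $r=\langle w\rangle_I$, $s=\langle \sigma\rangle_I$ and multiplied by $|I|$, using $|I_{\pm}|=|I|/2$, it becomes
\begin{equation*}
|I|\,B(\langle w\rangle_I, \langle \sigma\rangle_I) \ge |I_{+}|\,B(\langle w\rangle_{I_{+}}, \langle \sigma\rangle_{I_{+}}) + |I_{-}|\,B(\langle w\rangle_{I_{-}}, \langle \sigma\rangle_{I_{-}}) + c(\alpha)\,\mu_{I}.
\end{equation*}
Summing over all $I\subset J$ down to an arbitrary fine dyadic level telescopes, and discarding the non-negative bottom contribution via $B\ge 0$ leaves
\begin{equation*}
c(\alpha)\sum_{I\in\mathcal{D},\, I\subset J}\mu_{I} \le |J|\,B(\langle w\rangle_J, \langle \sigma\rangle_J) \le |J|\,Q^{\alpha} = |J|\,[w]_{A_{2}}^{\alpha},
\end{equation*}
which is exactly the Carleson property with the advertised constant $C\,[w]_{A_{2}}^{\alpha}$.

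The main obstacle is choosing the right Bellman candidate and verifying the concave gap with a uniform constant: the entire argument hinges on $\alpha\in(0,1/2)$ being precisely the regime where $B(r,s)=r^{\alpha}s^{\alpha}$ has a strictly negative-definite Hessian whose quadratic form matches the shape demanded by $\mu_{I}$. A minor technical point, easy to dispatch, is that the chord from $v_{-}$ to $v_{+}$ need not lie inside the $A_{2}$-region $\{rs\le Q\}$, but this is harmless since $B$ is smooth and concave on the whole open first quadrant, so only the $A_{2}$-bound on the \emph{endpoints} matters for the size estimate at the root.
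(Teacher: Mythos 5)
Your proposal is correct and it uses the same Bellman candidate $B(r,s)=r^{\alpha}s^{\alpha}$, the same Hessian estimate
\begin{equation*}
-d^{2}B \ge \alpha(1-2\alpha)\, r^{\alpha}s^{\alpha}\Bigl(\frac{(dr)^{2}}{r^{2}}+\frac{(ds)^{2}}{s^{2}}\Bigr)\,,
\end{equation*}
and the same telescoping along the dyadic tree that the paper uses; the eigenvalue computation ($\alpha$ and $\alpha(1-2\alpha)$ in the normalized variables $dr/r$, $ds/s$) is right, and the endpoint bound $B\le Q^{\alpha}$ correctly produces $[w]_{A_2}^{\alpha}$.

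Where you diverge is the passage from the infinitesimal Hessian bound to the discrete parent-over-children inequality. You exploit the exact midpoint structure of the two-child dyadic tree on $\R$: scale to $r_{\pm}=r(1\pm u)$, $s_{\pm}=s(1\pm v)$, reduce to $g(u,v)=1-\tfrac12[(1+u)^{\alpha}(1+v)^{\alpha}+(1-u)^{\alpha}(1-v)^{\alpha}]$, and run a compactness argument on $[-1,1]^{2}$, which works because $B$ is globally strictly concave on the whole positive quadrant for $\alpha\in(0,1/2)$. The paper instead writes Taylor's formula $q_i(0)-q_i(1)=-q_i'(0)-\int_0^1\!\!\int_0^x q_i''$ along the chord from the parent's value $a$ to each child's value $b_i$, uses the sign of $-q_i''$ to discard the far half of the chord, and integrates only over $t\in[0,1/2]$, where both coordinates stay comparable to those at $a$ with constants depending only on the doubling of the \emph{reference measure} and not of $w$. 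Your reduction is cleaner and perfectly adequate for the one-dimensional two-child setting the lemma is applied in; the paper's chord-integration device is what lets the same computation go through verbatim for cubes with many unequal-measure children (i.e. in $\R^d$ and in general metric spaces with a doubling measure), which is in fact how the lemma is stated and proved there. So nothing is wrong, but be aware your compactness step is tied to the balanced binary tree and would need replacement if you wanted the general statement.

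Two small points worth keeping in mind: (i) the compactness argument must be checked down to the boundary of $[-1,1]^{2}$, which you can do since $g$ extends continuously and strict global concavity of $B$ on the open quadrant gives $g>0$ away from the origin; (ii) your "minor technical point" about the chord leaving $\{rs\le Q\}$ is exactly right and is exactly why one should define $B$ on the whole positive quadrant, not only on the $A_{2}$ slab — the $A_{2}$ bound is only used at the root.
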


\bigskip

We take Lemma \ref{caralL} for granted till the end of the proof of Theorem \ref{tE}. First introduce a notation, let $\mu$ be a positive measure on $\R$, then
$$
M^d_{\mu}f (x) :=\sup_{I\in D, x\in I}\frac{1}{\mu (I)}\int_I |f|\,d\mu\,.
$$
This is called dyadic weighted maximal function. We will use it with $\mu =wdx$ or $\sigma dx$.

To estimate $IV$,  $II$, and symmetric to it $III$ we notice that
$$
 \frac{|\Delta_I \sigma|}{\langle \sigma \rangle_I}\sqrt{|I|} \le \langle w\rangle^{-\al/2}\langle \sigma\rangle^{-\al/2} \sqrt{\mu_I}\,,
$$
so, choosing $p\in (1,2)$
$$
\langle \psi \sigma\rangle_I  \frac{|\Delta_I \sigma|}{\langle \sigma \rangle_I}\sqrt{|I|} \le  \langle w\rangle^{-\al/2}\langle \sigma\rangle^{-\al/2}(\langle |\psi|^p \sigma\rangle_I)^{1/p} \langle \sigma\rangle^{1-1/p} \sqrt{\mu_I}\le
$$
$$
 \langle w\rangle^{-\al/2}\langle \sigma\rangle_I^{1-\al/2}\inf_{x\in I}(M^d_\sigma |\psi|^p (x))^{1/p}\cdot \sqrt{\mu_I}\,,
$$
where $M^d_\sigma$ is the dyadic weighted maximal function. Therefore,
$$
IV \le \sum_I \langle w\rangle^{1-\al}\langle \sigma\rangle_I^{1-\al}\inf_{ I}(M^d_\sigma |\psi|^p)^{1/p} \cdot \inf_I (M^d_w |\phi|^p)^{1/p} \cdot \mu_I\,.
$$
$$
II \le \sum_I (\phi w, h_I^w) \langle w\rangle_I^{1-\al/2}\langle \sigma\rangle_I^{1-\al/2}\frac{\inf_{ I} (M^d_\sigma |\psi|^p)^{1/p}}{\langle w\rangle_I^{1/2}}\cdot \sqrt{\mu_I}\,.
$$
The estimate of $III$ will be totally symmetric, so we omit it.
We continue:

$$
IV \le [w]_{A_2}^{1-\al}\sum_I \inf_{ I}(M^d_\sigma |\psi|^p)^{1/p} \cdot \inf_I (M^d_w |\phi|^p)^{1/p} \cdot \mu_I\,.
$$
$$
II \le [w]_{A_2}^{1-\al/2}\sqrt{\sum_I (\phi w, h_I^w)^2} \sqrt{\sum_I\frac{\inf_{ I} (M^d_\sigma |\psi|^p)^{2/p}}{\langle w\rangle_I}\cdot \mu_I}\,.
$$

Choose $F=(M^d_\sigma |\psi|^p)^{1/p} \cdot (M^d_w |\phi|^p)^{1/p}$ and $G=(M^d_\sigma |\psi|^p)^{2/p}$ and apply the following simple lemma ({\bf Exercise}!)

\begin{lemma}
\label{carl1}
Let $\{\al_L\}_{L\in D}$ define Carleson measure with intensity $B$.  Let $F$ be a positive function on the line. Then
\begin{equation}
\label{1}
\sum_L (\inf_L F)\, \al_L \le 2 B\int_{\R} F\,dx\,.
\end{equation}
\begin{equation}
\label{2}
\sum_L\frac{ \inf_L G}{\langle w\rangle_L} \al_L \le C\,B\int_{\R}\frac{G}{w} dx\,.
\end{equation}
\end{lemma}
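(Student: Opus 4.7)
The plan is to prove both inequalities by the layer-cake representation of $\inf_L F$ (respectively $\inf_L G$) combined with the Carleson condition applied to level sets; part (2) requires an additional weighted Carleson-transfer step drawing on the $A_\infty$ structure of $w$.

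For (1), I would start from $\inf_L F = \int_0^\infty \chi_{\{\inf_L F > t\}}\,dt$, noting that $\inf_L F > t$ forces $L\subseteq \{F>t\}$ modulo measure zero. Fubini then gives $\sum_L (\inf_L F)\al_L = \int_0^\infty\bigl(\sum_{L\subseteq \{F>t\}} \al_L\bigr)\,dt$. For each $t$, writing $\{F>t\}$ as a disjoint union of maximal dyadic intervals $I^t_j$ and applying the Carleson hypothesis $\sum_{L\subseteq I^t_j}\al_L \le B|I^t_j|$ on each yields $\sum_{L\subseteq \{F>t\}}\al_L \le B|\{F>t\}|$. Integrating in $t$ and using the layer-cake representation of $\|F\|_{L^1}$ recovers (1); the factor $2$ merely absorbs boundary and measurability technicalities.

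For (2), the same layer cake reduces matters to the weighted Carleson-transfer estimate $\sum_{L\subseteq I}\al_L/\langle w\rangle_L \le CB\,\sigma(I)$ (with $\sigma := 1/w$) valid for every dyadic interval $I$. Indeed, granted this bound, $\sum_{L\subseteq\{G>t\}}\al_L/\langle w\rangle_L \le CB\,\sigma(\{G>t\})$, and integrating in $t$ produces $CB\int G\sigma\,dx = CB\int G/w\,dx$. The transfer estimate itself I would prove by a stopping-time argument. The naive per-interval bound $\al_L/\langle w\rangle_L \le B\sigma(L)$, which follows from $\al_L\le B|L|$ together with the Cauchy--Schwarz inequality $|L|^2 \le w(L)\sigma(L)$, is too loose because $\sum_{L\subseteq I}\sigma(L)$ diverges across scales. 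So I set $\mathcal F_0 = \{I\}$ and recursively let $\mathcal F_{k+1}$ collect the maximal proper dyadic subintervals $L'\subsetneq L \in \mathcal F_k$ on which $\langle w\rangle_{L'}\le \tfrac12\langle w\rangle_L$. Every $L''\subseteq I$ is then ``good'' for a unique $L\in\bigcup_k \mathcal F_k$ in the sense that $L''\subseteq L$ and $\langle w\rangle_{L''}> \tfrac12\langle w\rangle_L$; on that good region
$$\sum_{L''\text{ good in }L}\frac{\al_{L''}}{\langle w\rangle_{L''}}\le \frac{2}{\langle w\rangle_L}\sum_{L''\subseteq L}\al_{L''} \le \frac{2B|L|}{\langle w\rangle_L}\le 2B\sigma(L),$$
and summing over stopping intervals reduces the transfer estimate to $\sum_k \sigma(E_k)\le C\sigma(I)$ where $E_k = \bigcup\mathcal F_k$.

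The main obstacle is this last geometric bound. The sets $E_k$ are contained in the level set $\{M^d_I w\le 2^{-k}\langle w\rangle_I\}$, so the $A_\infty$ (reverse H\"older) property of $w$ gives $|E_k|/|I|\lesssim 2^{-\eta k}$, and the $A_\infty$ property of $\sigma$ then converts this into $\sigma(E_k)/\sigma(I)\lesssim 2^{-\eta\theta k}$, so the sum converges geometrically. Since in the context of the paper $w$ is an $A_2$ weight, both $w$ and $\sigma$ lie in $A_\infty$ with constants controlled by $[w]_{A_2}$, so (2) holds with a constant depending only on $[w]_{A_2}$, which is harmless for the application to Theorem~\ref{sharp}.
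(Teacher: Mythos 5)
Your proof of \eqref{1} is fine: the layer-cake/maximal-dyadic-interval argument is the standard one and gives even better than the stated constant. The reduction of \eqref{2} to the transfer estimate $\sum_{L\subseteq I}\alpha_L/\langle w\rangle_L\le CB\,\sigma(I)$ is also correct. The gap is in how you prove that estimate and, more importantly, in your assessment of what it delivers. Your stopping time produces the bound $\sum_k\sigma(E_k)\le C(w)\,\sigma(I)$ only by invoking the $A_\infty$ properties of $w$ and $\sigma$, so $C(w)$ depends on $[w]_{A_2}$ (roughly linearly, since the $A_\infty$ exponents degrade like $1/[w]_{A_2}$). You declare this ``harmless for the application,'' but it is not: in the proof of Theorem~\ref{tE} the term $II$ is bounded by $[w]_{A_2}^{1-\alpha/2}\cdot\sqrt{C(w)\,[w]_{A_2}^{\alpha}}\,\|\phi\|_w\|\psi\|_\sigma$, and any nontrivial growth of $C(w)$ in $[w]_{A_2}$ destroys the linear dependence $\|T_\sigma\|\lesssim [w]_{A_2}$ that Theorem~\ref{tE} asserts and that the whole section is built to obtain. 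So the constant $C$ in \eqref{2} has to be absolute, and your argument does not give that. (A smaller slip: $E_k$ is not contained in $\{M^d_I w\le 2^{-k}\langle w\rangle_I\}$ --- for $x\in E_k\subseteq I$ one always has $M^d_I w(x)\ge \langle w\rangle_I$; what is true, and is what your subsequent step really uses, is $w(E_k)\le 2^{-k}w(I)$, which follows from disjointness of $\mathcal F_k$ and $\langle w\rangle_L\le 2^{-k}\langle w\rangle_I$ for $L\in\mathcal F_k$.)

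There is in fact a short proof of \eqref{2} with an absolute constant, needing no stopping time and no $A_\infty$. By Cauchy--Schwarz, $|L|^2=\bigl(\int_L w^{1/4}\sigma^{1/4}\bigr)^2\le \int_L w^{1/2}\int_L\sigma^{1/2}$ and $\int_L w^{1/2}\le (|L|\,w(L))^{1/2}$, whence $\langle\sqrt\sigma\rangle_L\ge \langle w\rangle_L^{-1/2}$, i.e.\ $1/\langle w\rangle_L\le \langle\sqrt\sigma\rangle_L^2$. Since $\sqrt{G\sigma}\ge\sqrt{\inf_L G}\,\sqrt\sigma$ on $L$, we get $(\inf_L G)/\langle w\rangle_L\le \langle\sqrt{G\sigma}\rangle_L^2$, and then the ordinary (unweighted) dyadic Carleson embedding theorem, applied to the function $\sqrt{G\sigma}$, gives
$$
\sum_L\frac{\inf_L G}{\langle w\rangle_L}\,\alpha_L\le \sum_L \langle\sqrt{G\sigma}\rangle_L^2\,\alpha_L\le 4B\int G\sigma\,dx=4B\int\frac{G}{w}\,dx\,,
$$
so \eqref{2} holds with $C=4$, independently of $w$. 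You should either run this argument or otherwise make sure the constant you produce is $w$-independent; as written, the proposal does not yield Lemma~\ref{carl1} in the form the paper needs.
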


Then using Lemma \ref{caralL} we get
$$
IV \le [w]_{A_2}^{1-\al}[w]_{A_2}^{\al} \int_{\R}(M^d_\sigma |\psi|^p)^{1/p} \cdot  (M^d_w |\phi|^p)^{1/p}\, dx =
$$
$$
[w]_{A_2} \int_{\R}(M^d_\sigma |\psi|^p)^{1/p} \cdot (M^d_w |\phi|^p)^{1/p}\, w^{1/2} \sigma^{1/2}dx \le
$$
$$
[w]_{A_2}\sqrt{\int (M^d_w |\phi|^p)^{2/p}\, wdx}\sqrt{\int (M^d_\sigma |\phi|^p)^{2/p}\, \sigma dx} \le C[w]_{A_2}\|\phi\|_w\|\psi\|_{\sigma}\,.
$$
As to $II$, we have again using Lemma \ref{carl1} (the second part) and Lemma \ref{caralL}:
$$
II \le  C\,[w]_{A_2}^{1-\al/2}[w]_{A_2}^{\al/2} \sqrt{\sum_I(\phi w, h_I^w)^2}\sqrt{\int_{\R}\frac{ (M^d_\sigma |\psi|^p)^{2/p}(x)}{w(x)}\,dx}\le
$$
$$
C\,[w]_{A_2} \|\phi\|_w \sqrt{\int_{\R} (M^d_\sigma |\psi|^p)^{2/p}(x)\,\sigma(x)dx} \le C\,[w]_{A_2} \|\phi\|_w\|\psi\|_{\sigma}\,.
$$

Theorem \ref{tE} is completely proved, function $B_Q$ is constructed.

\end{proof}

\vspace{.1in}

We need only to see the validity of Lemma \ref{caralL}. This is done by yet another {\bf Bellman function}.

\begin{proof}[Bellman proof of Lemma \ref{caralL}] We prove even a more general statement, namely we prove the version in $\R^d$ and even in each metric space with {\bf geometric doubling condition} and doubling measure $\mu$. So let us have a metric space with geometric doubling condition, meaning that every ball of radius $r$ can fit only at most $K$ disjoint balls of radius $r/2$, $K$ being independent of the ball and its radius. Such metric spaces carry a doubling measure $\mu$ by a theorem of Konyagin--Volberg \cite{KV}, and let $D$ denote  the family of ``dyadic cubes" on this metric space (constructions are numerous, the first belongs to M. Christ \cite{Ch}), and let $s_i(I)$ are dyadic children of $I\in D$. Finally, let $I\in D$ and let
$$
\mu_I:= ( \langle w\rangle_{\mu, I} \langle \sigma\rangle_{\mu, I})^{\al}\bigg(\frac{(\langle w\rangle_{\mu,s_i( I)}-\langle w\rangle_{\mu, I})^2}{\langle w\rangle_{\mu, I}^2} +\frac{(\langle \sigma\rangle_{\mu,s_i( I)}-\langle \sigma\rangle_{\mu, I})^2}{\langle \sigma\rangle_{\mu, I}^2} \bigg)\mu(I)\,.
$$
Lemma \ref{caralL} becomes the following statement, which we are proving below:
\begin{equation}
\label{caralLmu}
\forall I\in D\,\,\sum_{J\in D, J\subset I} \mu_J \le C_{\al} [w]_{\mu,A_2}^{\al} \mu(I)\,.
\end{equation}

Let $Q> 1, 0<\alpha<\frac12$. In domain  $\Omega_Q:=\{(x,y): x>0, y>0, 1<xy\le Q\}$ function $b_Q(x,y):=x^{\al}y^{\al}$ satisfies the following estimate of its Hessian matrix  (of its second differential form, actually)
$$
-d^2 b_Q(x,y)\ge \al(1-2\al)x^{\al}y^{\al}\bigg(\frac{(dx)^2}{x^2} +\frac{(dy)^2}{y^2}\bigg)\,.
$$
The form $-d^2 b_Q(x,y)\ge 0$ everywhere in $x>0, y>0$. Also obviously $0\le b_Q(x,y) \le Q^{\al}$ in $\Omega_Q$.
\begin{proof}
Direct calculation.
\end{proof}

\medskip

Fix now a  cube $I$ and let $s_i(I), i=1,...,M$, be all its sons. Let $a=(\langle w\rangle_{\mu, I}, \langle \sigma\rangle_{\mu, I})$, $b_i=(\langle w\rangle_{\mu,s_i( I)}, \langle \sigma\rangle_{\mu, s_i(I)})$, $i=1,\dots, M$, be points--obviously--in $\Omega_Q$, where $Q$ temporarily means $[w]_{A_2}$. Consider $c_i(t)=a(1-t)+ b_it, 0\le t \le 1$ and $q_i(t):= b_Q(c_i(t))$.  We want to use Taylor's formula
\begin{equation}
\label{Lagr}
q_i(0)-q_i(1) = -q'_i(0) - \int_0^1dx\int_0^x q_i''(t)\,dt\,.
\end{equation}
Notice two things: Sublemma  shows that $-q_i''(t) \ge 0$ always. Moreover, it shows that if $t\in [0,1/2]$,  then we have that the following qualitative estimate holds:
\begin{equation}
\label{wI}
-q_i''(t) \ge c\,( \langle w\rangle_{\mu, I} \langle \sigma\rangle_{\mu, I})^{\al}\bigg(\frac{(\langle w\rangle_{\mu,s_i( I)}-\langle w\rangle_{\mu, I})^2}{\langle w\rangle_{\mu, I}^2} +\frac{(\langle \sigma\rangle_{\mu,s_i( I)}-\langle \sigma\rangle_{\mu, I})^2}{\langle \sigma\rangle_{\mu, I}^2} \bigg)
\end{equation}
This requires a small explanation. If we are on the segment $[a, b_i]$, then the first coordinate of such a point cannot be larger than $C\, \langle w\rangle_{\mu, I}$, where $C$ depends only on doubling of $\mu$ (not $w$). This is obvious. The same is true for the second coordinate with the obvious change of $w$ to $\sigma$. But there is no such type of estimate from below on this segment:  the first coordinate cannot be smaller than $k\, \langle w\rangle_{\mu, I}$, but $k$ may (and will) depend on the doubling of $w$ (so ultimately on its $[w]_{A_2}$ norm). In fact, at the ``right" endpoint of $[a, b_i]$ the first coordinate is $\langle w\rangle_{\mu, s_i(I)}\le \int_I\,w\,d\mu/ \mu(s_i(I)) \le C\,  \int_I\,w\,d\mu/ \mu(I)=C\, \langle w\rangle_{\mu, I}$, with $C$ only depending on the doubling of $\mu$. But the estimate from below will involve the doubling of $w$, which we must avoid. But if $t\in [0,1/2]$, and we are on the ``left half" of interval $[a, b_i]$ then obviously the first coordinate is $\ge \frac12 \langle w\rangle_{\mu, I}$ and the second coordinate is $\ge \frac12 \langle \sigma\rangle_{\mu, I}$.

We do not need to integrate $-q_i''(t)$ for all $t\in [0,1]$ in \eqref{Lagr}. We can only use integration over $[0,1/2]$  noticing that $-q_i''(t)\ge 0$ otherwise. Then the chain rule 
$$
q_i''(t)=(b_Q(c_i(t))''=(d^2b_Q(c_i(t)) (b_i-a), b_i-a)\,,
$$ 
(where $(\cdot, \cdot)$ means the usual scalar product in $\R^2$) immediately gives us \eqref{wI} with constant $c$ depending on the doubling of $\mu$ but {\it independent} of the doubling of $w$.

\medskip

Next step is to add all \eqref{Lagr}, with convex coefficients $\frac{\mu(s_i(I))}{\mu(I)}$, and to notice that $\sum_{i=1}^M\frac{\mu(s_i(I))}{\mu(I)} q_i'(0) =\nabla b_{Q}(a) \sum_{i=1}^M\cdot (a-b_i)\frac{\mu(s_i(I))}{\mu(I)}=0$, because by definition
$$
a= \sum_{i=1}^M \frac{\mu(s_i(I))}{\mu(I)}\,b_i\,.
$$
Notice that the addition of all \eqref{Lagr}, with convex coefficients $\frac{\mu(s_i(I))}{\mu(I)}$ gives us now (we take into account \eqref{wI} and positivity of $-q_i''(t)$)
$$
b_Q(a)- \sum_{i=1}^M \frac{\mu(s_i(I))}{\mu(I)}\,b_Q(b_i) \ge 
$$
$$
c\,c_1\,( \langle w\rangle_{\mu, I} \langle \sigma\rangle_{\mu, I})^{\al}\sum_{i=1}^M\bigg(\frac{(\langle w\rangle_{\mu,s_i( I)}-\langle w\rangle_{\mu, I})^2}{\langle w\rangle_{\mu, I}^2} +\frac{(\langle \sigma\rangle_{\mu,s_i( I)}-\langle \sigma\rangle_{\mu, I})^2}{\langle \sigma\rangle_{\mu, I}^2} \bigg)\,.
$$
We used here the doubling of $\mu$ again, by noticing that $\frac{\mu(s_i(I))}{\mu(I)}\ge c_1$ (recall that $s_i(I)$ and $I$ are almost balls of comparable radii).
We rewrite the previous inequality using our definition of $\Delta_I w, \Delta_I\sigma$ listed above as follows
$$
\mu(I)\,b_Q(a)- \sum_{i=1}^M \mu(s_i(I))\,b_Q(b_i) \ge c\,c_1\,( \langle w\rangle_{\mu, I} \langle \sigma\rangle_{\mu, I})^{\al}\bigg(\frac{(\Delta_I w)^2}{\langle w\rangle_{\mu, I}^2} +\frac{(\Delta_I\sigma)^2}{\langle \sigma\rangle_{\mu, I}^2} \bigg)\mu(I)\,.
$$
Notice that $b_Q(a)=\langle w\rangle_{\mu, I}^{\al}\langle\sigma\rangle_{\mu,I}^{\al}$. Now we iterate the above inequality and get for any of  dyadic $I$'s:
$$
\sum_{J\subset I\,, J\in\,D} ( \langle w\rangle_{\mu, J} \langle \sigma\rangle_{\mu, J})^{\al}\bigg(\frac{(\Delta_J w)^2}{\langle w\rangle_{\mu, J}^2} +\frac{(\Delta_J\sigma)^2}{\langle \sigma\rangle_{\mu, J}^2} \bigg)\mu(J) \le C\, Q^{\al}\mu(I)\,.
$$
This is exactly the Carleson property of the measure $\{\mu_I\}$ indicated in our Lemma \ref{caralL}, with Carleson constant $C\,Q^{\al}$. The proof showed that $C$ depended only on $\al\in (0, 1/2)$ and on the doubling constant of measure $\mu$. Lemma \ref{caralL} is completely proved.

\end{proof}

\section{Estimates for Ahlfors--Beurling operator. Towards the Big Iwaniec problem by Bellman footsteps}
\label{AB}

In the previous section we estimated AB operator $T$ in weighted $L^2(w)$. The estimate was sharp in $[w]_{A_2}$:
\begin{equation}
\label{w2}
|(Tf,g)|\le C\, [w]_{A_2} \|f\|_{L^2(w)}\|g\|_{ L^2(w^{-1})}\,,
\end{equation}
it implied a sharp in $[w]_{A_p}$ estimate in weighted $L^p(w)$:
\begin{equation}
\label{wp}
|(Tf,g)|\le C(p)\, [w]_{A_p}^{\max (1, \frac1{p-1})} \|f\|_{L^p(w)}\|g\|_{ L^{p'}(w^{-1/(p-1)})}\,,\,\, p':=p/(p-1)\,.
\end{equation}

But we did not care about $C, C(p)$ at all. Now we consider just $w=1$, but we care about $C(p)$ very much. Big Iwaniec's problem conjectures
\begin{equation}
\label{BI}
C(p)= \max (p, p/(p-1))-1=: p^*-1\,.
\end{equation}

This is {\bf open} at the moment of writing this phrase. Using various {\bf Bellman functions} we will show the row of improvements
\begin{equation}
\label{BI1}
C(p)\le 2(p^*-1)\,.
\end{equation}
\begin{equation}
\label{BI2}
C(p)\le 1.7(p^*-1)\,.
\end{equation}
\begin{equation}
\label{BI3}
C(p)\le 1.575(p^*-1)\,.
\end{equation}
\begin{equation}
\label{BI4}
C(p)\le 1.4(p^*-1)\,.
\end{equation}

Recall things that we already know: 1) $T= R_1^2-R^2_2 + 2i R_1R_2$, where $R_i$ are Riesz transforms = multipliers with symbol $\xi_i/(|\xi_1|^2 +|\xi_2|^2)^{1/2}$, $i=1,2$;

\begin{equation}
\label{22}
2) \,\,(R_i^2 f, \bar g) = -2 \iint_{\R^3_+} \frac{\pd f}{\pd x_i} \frac{\pd g}{\pd x_i} \,dx dt\,,
\end{equation}
where $f, g$ in the left hand side are from $C_0^{\infty}(\R^2)$, and $f, g$ in the right hand side are heat extensions of functions in the left.

Hence Conjecture \ref{BI} is nothing else as the following innocent looking {\bf conjecture}

\begin{equation}
\label{BIh}
2\bigg|\iint_{\R_+^3} \bigg(\frac{\pd f}{\pd x_1}+ i \frac{\pd f}{\pd x_2}\bigg)\cdot \bigg(\frac{\pd g}{\pd x_1}+ i \frac{\pd g}{\pd x_2}\bigg)\,dx dt\bigg| \le (p-1) \|f\|_p\|g\|_{p'}\,,\,\, p>2\,.
\end{equation}

Let complex-valued functions $f=u+iv, g= \phi+i\psi$. Consider $f:=(u,v)$ as a map $\R^2\rightarrow \R^2$, do the same with $G:=(\phi, \psi)$. We have Jacobian matrices $DF, DG$ then. These are $2\times 2 $ matrices.

Imagine that we want to have a stronger estimate than \eqref{BIh} (which is probably {\bf too much!}):

\begin{equation}
\label{BIhm}
2\iint_{\R_+^3} \bigg|\frac{\pd f}{\pd x_1}+ i \frac{\pd f}{\pd x_2}\bigg|\cdot \bigg|\frac{\pd g}{\pd x_1}+ i \frac{\pd g}{\pd x_2}\bigg|\,dx dt \le (p-1) \|f\|_p\|g\|_{p'}\,,\,\, p>2\,.
\end{equation}

This is exactly

\begin{equation}
\label{BIhm1}
2\iint_{\R_+^3} (|DF|_2^2-2\det DF)^{1/2}(|DG|_2^2-2\det DG)^{1/2}\,dx dt \le (p-1) \|f\|_p\|g\|_{p'}\,,\,\, p>2\,,
\end{equation}
where $|\cdot|_2$ is the Hilbert-Schmidt norm of the matrix.

Nobody can prove \eqref{BIhm1} and equivalent to it \eqref{BIhm}. They may be wrong!

\bigskip

However, we will start with proving slightly lighter estimates:

\begin{equation}
\label{pr}
2\iint_{\R_+^3} \bigg|\frac{\pd f}{\pd x_1} \bigg|\bigg|\frac{\pd g}{\pd x_1}\bigg|+\bigg|\frac{\pd f}{\pd x_1}\bigg| \bigg|\frac{\pd g}{\pd x_1}\bigg|\,dx dt \le (p-1) \|f\|_p\|g\|_{p'}\,,\,\, p>2\,.
\end{equation}

Moreover, we will prove a stronger than \eqref{pr} (but weaker than \eqref{BIhm}) estimate 

\begin{equation}
\label{sq}
2\iint_{\R_+^3} \bigg(\bigg|\frac{\pd f}{\pd x_1} \bigg|^2 +\bigg|\frac{\pd f}{\pd x_2}\bigg|^2\bigg)^{1/2} \bigg(\bigg|\frac{\pd g}{\pd x_1} \bigg|^2 +\bigg|\frac{\pd g}{\pd x_2}\bigg|^2\bigg)^{1/2}\,dx dt \le (p-1) \|f\|_p\|g\|_{p'}\,,\,\, p>2\,.
\end{equation}

This will give us \eqref{BI1}, \eqref{BI2} correspondingly. To get to \eqref{BI3} and further improvements as \eqref{BI4} we will need a bit more (stochastic integrals).

Notice that we already know (by \eqref{22}) that \eqref{pr} immediately proves the following

\begin{theorem}
\label{r1r2}
$\|R_1^2-R_2^2\|_p\le p-1, \, p\ge 2\,.$
\end{theorem}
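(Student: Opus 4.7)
The plan is to combine the heat-extension identity \eqref{22} with estimate \eqref{pr} and then invoke duality. First, I would fix test functions $f,g\in C_0^\infty(\mathbb R^2)$ (and extend by density at the end) and apply \eqref{22} to each of $R_1^2$ and $R_2^2$ separately, which gives
\[
\big((R_1^2-R_2^2)f,\bar g\big) \;=\; -2\iint_{\R^3_+} \left(\frac{\pd f}{\pd x_1}\frac{\pd g}{\pd x_1} - \frac{\pd f}{\pd x_2}\frac{\pd g}{\pd x_2}\right)dx\,dt,
\]
where on the right $f,g$ denote the heat extensions.

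Next, I would move the absolute value inside the integral by the triangle inequality, yielding
\[
\big|\big((R_1^2-R_2^2)f,\bar g\big)\big| \;\le\; 2\iint_{\R^3_+}\Big(\big|\tfrac{\pd f}{\pd x_1}\big|\big|\tfrac{\pd g}{\pd x_1}\big| + \big|\tfrac{\pd f}{\pd x_2}\big|\big|\tfrac{\pd g}{\pd x_2}\big|\Big)dx\,dt.
\]
This is precisely the left-hand side of \eqref{pr} (applied to the pair $f,g$), so that inequality gives the bound $(p-1)\|f\|_p\|g\|_{p'}$ for $p\ge 2$.

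Finally, taking the supremum over all $g\in C_0^\infty(\mathbb R^2)$ with $\|g\|_{p'}\le 1$ and using the density of $C_0^\infty$ in $L^p$ and $L^{p'}$, duality produces $\|(R_1^2-R_2^2)f\|_p\le (p-1)\|f\|_p$, which is the claim. The only non-routine ingredient is \eqref{pr} itself, whose proof by the Bellman function/heat extension technique is the real content promised in this section; once \eqref{pr} is granted, the deduction of Theorem \ref{r1r2} is essentially a one-line consequence of the bilinear embedding and duality. Therefore, the main obstacle is deferred to the verification of \eqref{pr}, not to the present theorem.
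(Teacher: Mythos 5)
Your proof is correct and follows exactly the deduction the paper intends: it combines the heat-extension identity \eqref{22} with the bilinear embedding \eqref{pr} (whose right-hand integrand, despite a typo in the paper's display, clearly involves both $x_1$- and $x_2$-derivatives) and then closes by duality. The paper states this as a one-line remark before Theorem \ref{r1r2}, so you have simply filled in the routine steps; the only detail worth noting is that \eqref{pr} is stated for $p>2$, while the endpoint $p=2$ is immediate from $\|T\|_{L^2}=1$.
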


Because $2R_1R_2 = U\circ (R_1^2-R_2^2)\circ U^{-1}$, where $U$ is an isometry in all $L^p$ spaces (in fact, $U$ is the rotation of the argument of function by $45^{\circ}$), we get \eqref{BI1} from doubling
the claim of Theorem \ref{r1r2}.

\bigskip

\begin{proof}[proof of \eqref{pr}]
The first step is  by examination of what we already had in Section \ref{proofexist} after the statement of Theorem \ref{tE}. We do now {\bf exactly} the same: 

{\bf Suppose we have the following inequality for functions on interval $[0,1]$ provided with dyadic lattice $\mathcal{D}$}:
\begin{equation}
\label{burk1}
\Sigma_{I\in {\mathcal D}} |(f,h_I)|\, |(g,h_I)| \leq (p-1)\,\|f\|_{L^p}\|g\|_{L^{p'}}\,, p\ge 2\,.
\end{equation}

This inequality is scaleless, so we write it as 

\begin{equation}
\label{bu2}
J\in {\mathcal D},\,\, \frac1{4|J|}\Sigma_{I\in {\mathcal D},\,I\subset J} |\langle f\rangle_{I_-}-\langle f\rangle_{I_+}|\, |\langle
g\rangle_{I_-}-\langle g\rangle_{I_+}||I| \leq (p-1)\,\langle |f|^p\rangle_{J}^{1/p}\langle |g|^{p'}\rangle_{J}^{1/p'}\,.
\end{equation} 

Here $I_-,I_+$ are the left and the right halves of $I$, and $\langle\cdot\rangle_l$ means averaging over $l$ as usual. 
Given a fixed $J\in {\mathcal D}$, $p\ge 2$, we wish to introduce the Bellman function of \eqref{1.60}:

\[
\aligned
B_p(X,Y,x,y) &= \sup\{\frac1{4|J|}\Sigma_{I\in {\mathcal D},\,I\subset J} |\langle f\rangle_{I_-}-\langle f\rangle_{I_+}|\, 
|\langle g\rangle_{I_-}-\langle g\rangle_{I_+}||I| :\\
\langle f\rangle_J&=x,\,\langle g\rangle_J = y,\,
\langle |f|^p\rangle_J=X,\, \langle |g|^{p'}\rangle_J=Y \}\,.
\endaligned
\]

Obviously, the function $B$ does not depend on $J$, but it does depend on $p$. Its domain of definition is the following:
\[
R_p := \{  (X,Y,{\bf x},{\bf y}), \, |{\bf x}|^p \leq X,\,  |{\bf y}|^{p'}\leq Y\}\,.
\]

By \eqref{burk1} it satisfies 

\begin{equation}
\label{bu3}
0\leq B\leq (p-1)\,X^{1/p}Y^{1/p'}\,.
\end{equation}

We are going to prove that it also satisfies the following ``differential" inequality. Denote $v:=(X,Y,{\bf x},{\bf y})$, $v_-=(X_-,Y_-,{\bf x}_-,{\bf y}_-)$, $v_+=(X_+,Y_+,{\bf x}_+,{\bf y}_+)$, let $v,v_+,v_-$ lie in $R_p$,  and let $v=\frac12(v_{-}+v_+)$. Then

\begin{equation}
\label{bu4}
B(v)-\frac12\left(B(v_+) + B(v_-)\right) \geq\frac14 |{\bf x}_{+}  - {\bf x}_-||{\bf y}_{+}  - {\bf y}_-|
\,.
\end{equation}
The proof is {\bf verbatim} the same as in Section \ref{proofexist}.   And this inequality in infinitesimal sense becomes

\begin{equation}
\label{bu5}
d^2 B_p \ge 2|d{\bf x}||d{\bf y}|
\,.
\end{equation}

Having the function $B_p$ satisfying 

1)  $0\le B_p \le (p-1) X^{1/p}Y^{1/p'}$;

2) $-d^2 B_p \ge 2|d{\bf x}||d{\bf y}|$.

Assuming that $B_p$ is sufficiently smooth (which incidentally it is, one can write the formula for $B_p$), we can repeat {\bf verbatim}  we can repeat the proof of Theorem \ref{1.2}: we start with analyzing ($x=(x_1,x_2)\in \R^2$)
\begin{equation}
\label{svsn1}
\left(\frac{\pd}{\pd t} -\Delta\right) b(x,t)\,
\end{equation}
exactly as in the proof of Theorem \ref{1.2}: the only difference that $b$ now is not $B_Q\circ v$ but our $B_p\circ v$ and $v$ also slightly different, it is now
$$
v(x,t) := (|f|^p(x,t), |g|^{p'}(x,t), f(x, t), g(x, t))\,,
$$
where these are heat extensions of functions on $\R^2$ with corresponding symbol. We estimate the expression in \eqref{svsn1} in a pointwise way from below using 2), and in the average on a slab, using 1) we got exactly \eqref{pr}, Theorem \ref{r1r2}, and, therefore, \eqref{BI1}.

\bigskip

\noindent{\bf Remark.} Notice that variables ${\bf x},{\bf y}$ are complex, they are ``bench guards" (``mestoblyustiteli") for complex-valued functions $f=u+iv, g=\phi+i\psi$. So actually $B_p$ is a function of $6$ real variables, and, hence, \eqref{bu5} should be understood as
\begin{equation}
\label{bu6}
-d^2 B_p(X;Y; u,v; \phi, \psi) = (H_{B_p} h, h) \ge 2\sqrt{du^2+dv^2}\sqrt{d\phi^2+d\psi^2}
\,,
\end{equation}
where $u, v, \phi, \psi$ are just real variables (they are ``bench guards" for functions with the same symbols and their heat extensions), and $h=(dX, dY, du, dv, d \phi, d\psi)$ is a notation (strange may be) for an arbitrary vector in $\R^6$.
\end{proof}

To obtain \eqref{BI2} we notice first that in Theorem \ref{r1r2} we can use $R_1\cos\theta-R_2\sin\theta$ in place of $R_1$, and $R_1\sin\theta+R_2\cos\theta$ in place of $R_2$. In fact this is just application of rotation on $\theta$ in arguments. Then we notice that $(R_1\cos\theta-R_2\sin\theta)^2 - (R_1\sin\theta+R_2\cos\theta)^2= (R_1^2-R_2^2)\cos 2\theta -2R_1R_2\sin 2\theta$. Therefore, we got
\begin{theorem}
\label{phi}
For any $\phi\in (0, 2\pi]$, $\|(R_1^2-R_2^2)\cos \phi -2R_1R_2\sin \phi\|_p \le p-1$ if $p\ge 2$.
\end{theorem}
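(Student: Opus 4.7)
The plan is to reduce Theorem \ref{phi} to Theorem \ref{r1r2} by a rotation trick, which is exactly what the paragraph preceding the statement hints at. The key observation is that the pair of Riesz transforms $R_1, R_2$ transforms covariantly under rotations of $\R^2$: if $U_\theta$ denotes the isometry of $L^p(\R^2)$ induced by rotating the argument by angle $\theta$, then $U_\theta R_j U_\theta^{-1}$ is again a Fourier multiplier with symbol $\xi_j / |\xi|$ expressed in the rotated coordinates. Concretely,
\[
U_\theta R_1 U_\theta^{-1} = R_1 \cos\theta - R_2 \sin\theta, \qquad U_\theta R_2 U_\theta^{-1} = R_1 \sin\theta + R_2 \cos\theta,
\]
as can be verified on the Fourier side.

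The next step is to apply Theorem \ref{r1r2} to the rotated pair. Since $U_\theta$ is an $L^p$ isometry, conjugation by $U_\theta$ preserves the $L^p$-operator norm, so
\[
\bigl\| (R_1\cos\theta - R_2\sin\theta)^2 - (R_1\sin\theta + R_2\cos\theta)^2 \bigr\|_p = \|R_1^2 - R_2^2\|_p \le p-1
\]
for $p \ge 2$. Expanding the left-hand side by the double-angle identities yields
\[
(R_1\cos\theta - R_2\sin\theta)^2 - (R_1\sin\theta + R_2\cos\theta)^2 = (R_1^2 - R_2^2)\cos 2\theta - 2R_1 R_2 \sin 2\theta,
\]
where the cross terms combine because $R_1$ and $R_2$ commute (they are both Fourier multipliers). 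Setting $\phi := 2\theta$ and letting $\theta$ range over $(0,\pi]$ gives the desired inequality for every $\phi \in (0, 2\pi]$.

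There is no real obstacle in this plan; the only thing worth being careful about is the commutativity of $R_1$ and $R_2$ used to collect the two cross terms into $-2 R_1 R_2 \sin 2\theta$, and the verification that conjugation by $U_\theta$ really gives the stated linear combinations of $R_1, R_2$. Both are immediate from the multiplier representation $\widehat{R_j f}(\xi) = -i (\xi_j/|\xi|) \hat f(\xi)$ and the fact that $U_\theta$ transforms $\xi$ by the same rotation. Thus the theorem follows from Theorem \ref{r1r2} with no additional Bellman-function work.
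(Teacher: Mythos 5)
Your proposal is correct and is essentially identical to the paper's own argument: the paper likewise conjugates Theorem \ref{r1r2} by the rotation isometry $U_\theta$ to replace $R_1, R_2$ by $R_1\cos\theta - R_2\sin\theta, R_1\sin\theta + R_2\cos\theta$, expands the square using commutativity and the double-angle formulas, and sets $\phi = 2\theta$. There is nothing to add.
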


We notice that a certain estimate of $T=(R_1^2-R_2^2) +2iR_1R_2$ can be obtained if we answer the following question. Suppose $A, B$ are two operators in $L^p(\mu)$, and 
for any angle $\|A\cos \phi -B\sin \phi\|_p \le 1$, then what is the estimate of $\|A-iB\|_p$?

This is easy on real functions, let $f\in L^p_{real}(\mu)$, and let $A, B$ map real functions to real functions ($A=R_1^2 - R_2^2, B=2R_1R_2$ are such). In fact,
$$
\int |f|^p\,d\mu\ge \int |(Af)(x)\cos\phi +(Bf)(x)\sin\phi|^p \, d\mu= 
$$
$$
\int (|Af|^2 + |Bf|^2|^{p/2} |\cos (a(x)-\phi)|^p \, d\mu(x) \,.
$$
Integrate this over $\frac1{2\pi}\int_0^{2\pi}\dots$, by Fubini' theorem we will get
\begin{equation}
\label{av}
\int |f|^p\,d\mu\ge \int (|Af|^2 + |Bf|^2)^{p/2}\,d\mu \cdot \frac1{2\pi}\int_0^{2\pi}|\cos \phi |^p\,d\phi\,.
\end{equation}
Put
$$
\tau(p) := \left( \frac1{2\pi}\int_0^{2\pi}|\cos \phi |^p\,d\phi\right)^{1/p}\,,
$$
then on {\bf real functions}
\begin{equation}
\label{av1}
\|A+iB\|_p \le \sup_{\phi}\|A\cos\phi +B\sin\phi\|_p/\tau(p)\,.
\end{equation}

Unfortunately this was in real category. We do not know how obtain \eqref{av1}--or something like that--for general operators $A,B$ on complex function. 
May be this is also {\bf an exercise?}

However, we will obtain now \eqref{BI2}. First we need

\begin{proof}[The proof of \eqref{sq} ]

We use the following elementary lemma from Linear Algebra:

\begin{lemma}[Linear Algebra lemma]
Let $A,B,C$ be nonnegative matrices of size $d\times d$. Let
\begin{equation}
\label{ABC}
(Ah, h)\ge 2 (Bh, h)^{1/2}(Ch, h)^{1/2}\,,\,\,\forall h\in \C^d\,.
\end{equation}
Then there exists $\tau\in (0, \infty)$ independent of $h$ such that
$$
(Ah, h)\ge \tau (Bh, h) +\frac1{\tau}(Ch, h)\,,\,\,\forall h\in \C^d\,.
$$
\end{lemma}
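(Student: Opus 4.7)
The plan is to cast the conclusion as an assertion about the minimum of a convex function. Define
\[
\phi(\tau) := \lambda_{\max}\bigl(\tau B + \tau^{-1}C - A\bigr) = \sup_{\|h\|=1}\bigl[\tau(Bh,h)+\tau^{-1}(Ch,h)-(Ah,h)\bigr], \quad \tau > 0,
\]
so the conclusion is equivalent to finding $\tau^* > 0$ with $\phi(\tau^*) \le 0$. Since $\tau \mapsto \tau^{-1}$ is operator convex on $(0,\infty)$, each $F_h(\tau) := \tau(Bh,h)+\tau^{-1}(Ch,h)-(Ah,h)$ is convex in $\tau$, so $\phi$ is convex on $(0,\infty)$ as a supremum of convex functions. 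Excluding the easy degenerate cases $B=0$ or $C=0$, one has $\phi(\tau) \to +\infty$ as $\tau \to 0^+$ or $\tau \to +\infty$, so $\phi$ attains its infimum at some interior $\tau^* \in (0,\infty)$.

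Suppose for contradiction that $\lambda^* := \phi(\tau^*) > 0$, and let $V^* \subset \C^d$ be the eigenspace of $\tau^*B + \tau^{*-1}C - A$ for the eigenvalue $\lambda^*$. Since $\tau^*$ is an interior minimizer of the convex $\phi$, Danskin's theorem for the subdifferential of a supremum gives
\[
0 \in \partial \phi(\tau^*) = \bigl[\,\min_{h\in V^*,\,\|h\|=1}\bigl((Bh,h)-\tau^{*-2}(Ch,h)\bigr),\ \max_{h\in V^*,\,\|h\|=1}\bigl((Bh,h)-\tau^{*-2}(Ch,h)\bigr)\bigr].
\]
Hence the continuous function $h \mapsto (Bh,h)-\tau^{*-2}(Ch,h)$ takes both nonnegative and nonpositive values on the unit sphere of $V^*$. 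That sphere is connected, so by the intermediate value theorem there is a unit vector $h^* \in V^*$ with $(Bh^*,h^*) = \tau^{*-2}(Ch^*,h^*)$, i.e.\ $\tau^* = \sqrt{(Ch^*,h^*)/(Bh^*,h^*)}$.

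For this $h^*$ we have $\phi(\tau^*) = F_{h^*}(\tau^*)$ (since $h^*$ attains the sup defining $\phi(\tau^*)$), and the AM--GM identity at the optimal $\tau^*$ for $h^*$ collapses the expression:
\[
\lambda^* = \tau^*(Bh^*,h^*)+\tau^{*-1}(Ch^*,h^*)-(Ah^*,h^*) = 2\sqrt{(Bh^*,h^*)(Ch^*,h^*)}-(Ah^*,h^*) \le 0,
\]
the last inequality being exactly the standing hypothesis applied to $h^*$. This contradicts $\lambda^* > 0$, so $\phi(\tau^*) \le 0$ and $\tau := \tau^*$ is the desired scalar.

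The main obstacle is passing from the averaged optimality condition produced by Danskin to a single unit vector $h^*$ on which $\tau^*$ coincides with the scalar AM--GM optimizer; the eigenvalue $\lambda^*$ can have multiplicity, so an arbitrary top eigenvector need not satisfy $(Bh,h)=\tau^{*-2}(Ch,h)$, and the intermediate value theorem on the unit sphere of $V^*$ is used precisely to circumvent this. A seemingly cleaner Sion-minimax approach on $(\tau,h)\mapsto F_h(\tau)$ does \emph{not} work, because the hypothesis $(Ah,h)\ge 2\sqrt{(Bh,h)(Ch,h)}$ on rank-one density matrices $hh^*$ does not extend to general density matrices $P$: one would need $\operatorname{tr}(AP)\ge 2\sqrt{\operatorname{tr}(BP)\operatorname{tr}(CP)}$, but Cauchy--Schwarz goes in the wrong direction.
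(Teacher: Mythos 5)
The paper disposes of this lemma with a one-word ``Exercise,'' so there is no house proof to compare against; what you have written is a correct and complete solution. The chain of ideas is sound: $\tau\mapsto\tau(Bh,h)+\tau^{-1}(Ch,h)$ is convex, so $\phi$ is convex as a supremum; with $B\neq 0$ and $C\neq 0$ it blows up at both ends of $(0,\infty)$ and therefore has an interior minimizer $\tau^*$; Danskin's theorem says the subdifferential at $\tau^*$ is the convex hull of $(Bh,h)-\tau^{*-2}(Ch,h)$ over the unit sphere of the top eigenspace $V^*$; that sphere is connected (and for $\dim_\C V^*=1$ the quantity is phase-invariant, hence constant), so $0\in\partial\phi(\tau^*)$ forces a unit $h^*\in V^*$ with $(Bh^*,h^*)=\tau^{*-2}(Ch^*,h^*)$, for which the AM--GM identity collapses $F_{h^*}(\tau^*)$ to $2\sqrt{(Bh^*,h^*)(Ch^*,h^*)}-(Ah^*,h^*)\le 0$, contradicting $\phi(\tau^*)>0$. (The case $(Bh^*,h^*)=(Ch^*,h^*)=0$ gives $F_{h^*}(\tau^*)=-(Ah^*,h^*)\le 0$ directly, so the collapse covers both branches.) Your remark on why a Sion minimax in a joint density-matrix variable would \emph{not} work is also right and worth keeping.

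One point deserves sharpening rather than a wave of the hand: the excluded cases $B=0$ or $C=0$ are not ``easy'' --- they are precisely where the lemma \emph{fails as literally stated}. For instance, $d=1$, $A=B=0$, $C=1$ satisfies the hypothesis vacuously (both sides are $0$), yet no finite $\tau$ gives $0\ge\tau^{-1}$. So the lemma should be read with the tacit nondegeneracy $B\neq 0$, $C\neq 0$, which is automatic in the application (there $B$ and $C$ are the nonzero coordinate projections onto entries $3,4$ and $5,6$). Either state this hypothesis explicitly or note that the boundary cases are genuinely outside the scope of the lemma, not merely ``easy.''
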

\begin{proof} {\bf Exercise.}
\end{proof}

We apply this lemma separately   to  $h_1, h_2$, where ($f=u+iv, g=\phi+i\psi$, )
 $$h_1=(\pd_{x_1} |f |^p(x,t), \pd_{x_1} |g|^{p'}(x,t),\pd_{x_1} u(x,t),\pd_{x_1}v(x,t),\pd_{x_1} \phi(x,t),\pd_{x_1} \psi(x,t))\,,$$   $$h_2=(\pd_{x_2} |f |^p(x,t), \pd_{x_2} |g|^{p'}(x,t),\pd_{x_2} u(x,t),\pd_{x_2}v(x,t),\pd_{x_2} \phi(x,t),\pd_{x_2} \psi(x,t))\,,$$ and $A=H_{B_p}(|f |^p(x,t),  |g|^{p'}(x,t), u(x,t),v(x,t),\phi(x,t), \psi(x,t))$, and $B$ consisting of all zeros except $3,3$ and $4,4$ entries, where we have $1$, and $C$ consisting of all zeros except $5,5$ and $6,6$ entries, where we have $1$. 

Then we immediately get \eqref{sq}.

\end{proof}

\begin{proof}[The proof of \eqref{BI2} ]
We use the previous notations. We want a better estimate of $Tf=(A+iB)(u+iv)= Au-Bv + i (Av+Bu)$. Using the trick above \eqref{av} we can average the following equality over $(0,2\pi)$
$$
 \int |(Au-Bv)(x)\cos\phi +(Av +Bu)(x)\sin\phi|^p d\mu= 
 $$
 $$
 \int (|Au-Bv|^2 + |Av+Bu|^2|^{\frac{p}2} |\cos (a(x)-\phi)|^p  d\mu(x) \,.
$$
Then we get
$$
\tau (p)\cdot (\int |Tf|^p)^{1/p} \le \sup_{\phi}(\int |(Au-Bv)(x)\cos\phi +(Av +Bu)(x)\sin\phi|^p)^{1/p} =
$$
$$
\sup_{\phi}\sup_{\text{real}\,\psi\,,\,\|\psi\|_{p'}\le1} \int [(Au-Bv)(x)\cos\phi +(Av +Bu)(x)\sin\phi]\psi(x)\,dx=:E\,
$$
However the last expression can be rewritten using \eqref{22} and integration by parts as follows:

$$
E= 2\Re\iint_{\R^3_+} (\pd_{x_1}+i\pd_{x_2}) f(x,t) (\pd_{x_1}+i\pd_{x_2})e^{-i\phi}\psi(x,t)dxdt\le
$$
$$
2\sqrt{2}\iint \left( |\pd_{x_1}f|^2 +|\pd_{x_2} f|^2\right)^{1/2} \left((\pd_{x_1}\psi)^2 +(\pd_{x_2} \psi)^2\right)^{1/2}\le \sqrt{2} (p-1)\|f\|_p\,,\, p>2\,.
$$
We used \eqref{sq}. Here $\sqrt{2}$ appeared trivially from 
$$
|(\pd_{x_1}+i\pd_{x_2}) f(x,t)|\le \sqrt{2}\left( |\pd_{x_1}f|^2 +|\pd_{x_2} f|^2\right)^{1/2} \,.
$$

Finally we get
\begin{equation}
\label{asymp1}
\|T\|_p \le \frac{\sqrt{2}(p-1)}{\left( \frac1{2\pi}\int_0^{2\pi}|\cos \phi |^p\,d\phi\right)^{1/p}} \,, p>2\,.
\end{equation}

Asymptotically this is $1.41...(p-1)$. Choosing large $p$, interpolating between $L^2$, where the norm of $T$ is $1$ and the estimate \eqref{asymp1} for this large $p$, then optimizing by the choice of $p$ one can get  \eqref{BI2} ({\bf exercise!}).

\end{proof}

Notice that \eqref{sq} immediately proves the following 

\begin{theorem}
\label{real}
1) $\|T:L^p_{real}\rightarrow L^p\| \le \sqrt{2} (p-1), \, p\ge 2$;

2) $|(Tf, g)|\le (p-1)\|f\|_p\|g\|_{p'},\, p\ge 2\,,\,\text{if}\,\, f, g\,\,\text{are real valued}\,.$
\end{theorem}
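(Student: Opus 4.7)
The plan is to deduce both parts immediately from the square-function estimate \eqref{sq}, combined with the integration-by-parts identity for $T$ that already appeared in the excerpt via formula \eqref{22}.

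First I would set up the key identity. Writing $T = (R_1+iR_2)^2 = R_1^2 - R_2^2 + 2iR_1R_2$ and applying \eqref{22} to each of $R_1^2$, $R_2^2$, and the polarized piece $R_1R_2 = \tfrac12(R_1R_2+R_2R_1)$, one sees (this is exactly the content of the conjectural inequality \eqref{BIh}) that for $f,g\in C_0^{\infty}(\C)$,
\[
(Tf,\bar g) = -2\iint_{\R^3_+}\bigl(\partial_{x_1}+i\partial_{x_2}\bigr)f\cdot\bigl(\partial_{x_1}+i\partial_{x_2}\bigr)g\,dx\,dt,
\]
where on the right $f,g$ denote heat extensions. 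This is what I will estimate.

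For part 2), where both $f$ and $g$ are real-valued, $\bar g = g$ and one simply has the exact equality $|(\partial_{x_1}+i\partial_{x_2})h|=\sqrt{(\partial_{x_1}h)^2+(\partial_{x_2}h)^2}$ for $h\in\{f,g\}$. Substituting into the identity above and applying the triangle inequality for the integral yields
\[
|(Tf,g)|\le 2\iint_{\R^3_+}\sqrt{(\partial_{x_1}f)^2+(\partial_{x_2}f)^2}\sqrt{(\partial_{x_1}g)^2+(\partial_{x_2}g)^2}\,dx\,dt,
\]
and then \eqref{sq} finishes the proof of 2) with constant $p-1$.

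For part 1), I would allow $g$ to be complex in the duality formula $\|Tf\|_p=\sup_{\|g\|_{p'}\le 1}|(Tf,g)|$, keeping $f$ real. Then $|(\partial_{x_1}+i\partial_{x_2})f|=\sqrt{(\partial_{x_1}f)^2+(\partial_{x_2}f)^2}$ as before, but for complex $g$ a short computation gives $|(\partial_{x_1}+i\partial_{x_2})g|^2 = |\partial_{x_1}g|^2+|\partial_{x_2}g|^2+2\Im(\partial_{x_2}g\,\overline{\partial_{x_1}g})\le 2(|\partial_{x_1}g|^2+|\partial_{x_2}g|^2)$, so the only cost is a factor $\sqrt 2$ in front of the heat-extension integrand for $g$. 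Plugging into the identity and applying \eqref{sq} once more gives $|(Tf,\bar g)|\le \sqrt{2}(p-1)\|f\|_p\|g\|_{p'}$, hence $\|Tf\|_p\le \sqrt{2}(p-1)\|f\|_p$ by duality. There is no serious obstacle here: the only non-bookkeeping step is the complex Cauchy--Riemann-type decomposition of $T$ into $(\partial_{x_1}+i\partial_{x_2})^2$ which has already been recorded in the paper, and the square-function estimate \eqref{sq} does all the real work. The $\sqrt 2$ in 1) is unavoidable in this approach because complex conjugation breaks the identity $|\partial_{x_1}g+i\partial_{x_2}g|^2=|\partial_{x_1}g|^2+|\partial_{x_2}g|^2$, and this is precisely what Theorem \ref{real} reflects.
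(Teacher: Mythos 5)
Your proposal is correct and, after unwinding notation, is exactly the paper's (intentionally telegraphic) argument: the paper writes the integrand of \eqref{BIhm1} as $\left(|DF|_2^2-2\det DF\right)^{1/2}\left(|DG|_2^2-2\det DG\right)^{1/2}$ and uses $\left(|DF|_2^2-2\det DF\right)^{1/2}\le\sqrt2\,|DF|_2$ together with $\det DF=0$ for real $f$, while you phrase the identical pointwise inequality directly in the complex form $|(\partial_{x_1}+i\partial_{x_2})h|$ (these are the same quantity, since $|(\partial_{x_1}+i\partial_{x_2})f|^2=|DF|_2^2-2\det DF$). Both routes then plug into \eqref{sq} and use duality (allowing the test function $g$ to range over all of $L^{p'}$) for part 1). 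One small typo: $|(\partial_{x_1}+i\partial_{x_2})g|^2=|\partial_{x_1}g|^2+|\partial_{x_2}g|^2-2\,\Im\!\left(\partial_{x_2}g\,\overline{\partial_{x_1}g}\right)$ (minus, not plus), but the bound by $2\left(|\partial_{x_1}g|^2+|\partial_{x_2}g|^2\right)$ is unaffected.
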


\begin{proof}
Just look at \eqref{BIhm1}, compare it with \eqref{sq} and the fact that $(|DF|_2^2-2\det DF)^{1/2} \le \sqrt{2} |DF|$, $F=(u,v)$. We also need to notice that in this inequality  for real valued $f=u+i0$ we have $\det DF=0$ and the constant $\sqrt{2}$ can be replaced by $1$. Finish the proof: {\bf exercise}.

\end{proof}

\bigskip

So everything above hinges on inequality \eqref{burk1}. This inequality was proved by Burkholder in mid 80's and it is one of the {\bf remarkable inventions}. It is done by use of {\bf Bellman function technique}.

\subsection{The proof of inequality \eqref{burk1}. Burkholder's Bellman function.}
\label{prbu1}

We follow \cite{Bu1}, \cite{Bu5}, \cite{Bu7}--but loosely. See also the exposition in the review paper \cite{Ba}.

Let $f$ be real valued on  $[0,1]=:I_0$. Let $\{h_I\}_{I\in \mathcal D}$ be the usual Haar functions on $I_0$ normalized in $L^2$. Consider an operator 
$$
T_{\eps} f= \sum_{I\in \mathcal D} \eps_I (f, h_I) h_I\,,\,\, \eps:=\{\eps_I\}_I\,,\,\, \eps_I=\pm 1\,.
$$
This family is called martingale transforms.

Burkholder proved the following remarkable

\begin{theorem}
\label{Buth}
$\sup_{\eps}\|T_{\eps}\|_p=p^*-1: =\max (p, p/(p-1)) -1\,.$
\end{theorem}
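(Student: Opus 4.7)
The plan is to follow Burkholder's original approach via an explicit Bellman function on $\mathbb{R}^2$. Write $g := T_\epsilon f$ (both real-valued on $[0,1]$) and translate the sign structure $\epsilon_I = \pm 1$ into a differential-subordination statement on the Haar martingale: at every split of a dyadic interval $I$ into children $I_\pm$, the jumps satisfy $|\langle g\rangle_{I_+} - \langle g\rangle_{I_-}| = |\langle f\rangle_{I_+} - \langle f\rangle_{I_-}|$, because $(g, h_I) = \epsilon_I (f, h_I)$. The proof splits into an upper bound (Bellman) and the matching lower bound (test functions).

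The core step is to exhibit a function $B \colon \mathbb{R}^2 \to \mathbb{R}$ with the two properties
\[
\text{(a)} \ B(x, y) \ge U(x, y) := |y|^p - (p^*-1)^p |x|^p \quad \forall (x,y),
\]
\[
\text{(b)} \ 2B(x, y) \ge B(x+h, y+k) + B(x-h, y-k) \quad \text{whenever } |k| \le |h|,
\]
together with the normalization $B(0, 0) = 0$. Burkholder's explicit candidate is
\[
B(x, y) = \alpha_p \bigl(|y| - (p^*-1)|x|\bigr) \bigl(|x| + |y|\bigr)^{p-1}, \qquad \alpha_p := p\bigl(1 - 1/p^*\bigr)^{p-1},
\]
which is homogeneous of degree $p$ and symmetric in the sign of each coordinate; these symmetries reduce the verification of (a) and (b) to the quadrant $x, y \ge 0$ and, by homogeneity, essentially to the single variable $t = y/x \in [0, \infty)$. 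On the ridge $y = (p^*-1)x$ one has equality $B = U$, and $\alpha_p$ is precisely the constant that makes the tangency along this ridge compatible with the concavity (b).

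Once (a) and (b) are established, I would run the dyadic induction. By replacing $f$ with $f - \langle f\rangle_{I_0}$ one may assume $\langle f\rangle_{I_0} = 0$; since $T_\epsilon$ preserves the mean, $\langle g\rangle_{I_0} = 0$ as well. Applying (b) at each split — which is legal because $|k_I| = |h_I|$ exactly — and telescoping yields, for every $n$,
\[
|I_0| \, B(0, 0) \ge \sum_{I \in \mathcal{D}_n} |I| \, B(\langle f\rangle_I, \langle g\rangle_I).
\]
Letting $n \to \infty$ and using Lebesgue differentiation ($\langle f\rangle_I \to f$ and $\langle g\rangle_I \to g$ a.e.), followed by a Fatou / uniform-integrability argument for $B(f,g)$ (dealt with by first assuming $f$ bounded with $T_\epsilon f \in L^p$, then removing the truncation), one obtains
\[
0 \ge \tfrac{1}{|I_0|}\!\int_{I_0}\! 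B(f, g) \ge \tfrac{1}{|I_0|}\!\int_{I_0}\! U(f, g) = \|g\|_p^p - (p^*-1)^p \|f\|_p^p,
\]
which is $\|T_\epsilon f\|_p \le (p^*-1)\|f\|_p$. For the lower bound I would exhibit an extremizing sequence: simple dyadic martingales $(f_n, g_n)$ whose trajectories hug the ridge $|y| = (p^*-1)|x|$, obtained by choosing the signs $\epsilon_I$ so that at each step the pair $(\langle f\rangle_I, \langle g\rangle_I)$ moves along the level set where $B$ and $U$ coincide; the ratio $\|g_n\|_p / \|f_n\|_p$ then tends to $p^*-1$.

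The main obstacle, by far, is step (b) — the directional concavity of $B$. The function is not $C^2$ across the ridge $y = (p^*-1)x$, so after the symmetry and homogeneity reductions one still has to handle segments $s \mapsto (x+sh, y+sk)$ that cross the non-smooth locus, and since both (a) and (b) are tight on that ridge there is no slack to absorb errors; the verification is therefore a careful piecewise calculus exercise rather than an abstract concavity argument. Extending from real to complex scalars (if desired) is then a routine $45^\circ$-rotation argument of the kind already used earlier in the paper for $2R_1R_2$ versus $R_1^2 - R_2^2$.
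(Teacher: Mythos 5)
Your proposal follows the same route as the paper's own argument: both fix the explicit degree-$p$ homogeneous function
\[
\Phi(x,y)=p\Bigl(1-\tfrac1{p^*}\Bigr)^{p-1}\bigl(|y|-(p^*-1)|x|\bigr)\bigl(|x|+|y|\bigr)^{p-1},
\]
verify that it is zigzag concave (what the paper calls Lemma~\ref{zz}, your step (b)) and that it majorizes $|y|^p-(p^*-1)^p|x|^p$ (your step (a)), and then telescope the mid-point concavity down the Haar filtration and appeal to $\Phi\le 0$ on the diagonal (equivalently $B(0,0)=0$). The only structural difference is that the paper first introduces the genuine three-variable Bellman function $B(x,y,z)$, defines $\f(x,y)=\sup_z\,[B(x,y,z)-(p^*-1)^p z]$, and derives from this the zigzag concavity; you go straight to the candidate $\Phi$ and propose to check (a), (b) by a piecewise one-variable computation — a legitimate shortcut, and in fact what the paper does too once it gets down to "Finishing the proof. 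The real case." For the lower bound you propose an extremizing sequence hugging the ridge, whereas the paper records sharpness through the statement that $h_c$ has no zigzag concave homogeneous majorant for $c<p^*-1$; these are two standard ways to express the same fact.

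Two small corrections. First, $T_\varepsilon$ does not "preserve the mean": since every $h_I$ has mean zero, $T_\varepsilon f$ is mean-zero for \emph{every} $f$. The normalization you want (both $f$ and $g$ starting at $(0,0)$) is still obtained by subtracting $\langle f\rangle_{I_0}$, because that doesn't change $T_\varepsilon f$; so the step is valid but the justification is off. Second, the closing sentence is wrong in substance: the $45^\circ$-rotation trick in the paper is a Fourier-multiplier conjugation moving $R_1^2-R_2^2$ to $2R_1R_2$; it has nothing to do with passing from real to complex or Hilbert-valued scalars in the dyadic martingale transform setting. The genuine extension to $\C$- or $H$-valued $f,g$ is the part the paper calls a "miracle": one must re-verify the concavity inequality for $\f(x,y):=\Phi(\|x\|,\|y\|)$ with $\|dx\|=\|dy\|$, which is a nontrivial computation with the Hessian and not a change of variables. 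Since Theorem~\ref{Buth} in the paper is stated for real-valued $f$ on $[0,1]$, this misstatement does not affect the theorem you were asked to prove, but it should not be left standing as written.
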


He gave several proofs, all difficult, to be found in \cite{Bu1}--\cite{Bu7}. Another proof by Vasyunin--Volberg see arxiv: 1006.2633, \cite{VaVo}.

In all these proofs the following object is indispensable. It is Burkholder's {\bf Bellman function}.

Let $\Om:=\{(x,y,z): |x|^p\le z\}$ and let
$$
B(x,y, z):= \sup\{ \|g\|_p^p: \langle f\rangle_{I_0} =x, \,\langle g\rangle_{I_0} =y, \langle |f|^p\rangle_{I_0} =z, \forall I\in \mathcal D \,\,|(g, h_I)|=|(f,h_I)|\}\,.
$$
{\bf Symmetries:}

\begin{equation}
\label{sym}
B(tx,ty, t^p z) = t^p\,B(x,y,z)\,,\, B(-x, y)= B(x,y)\,,\, B(x, -y) = B(x,y)\,.
\end{equation}

Burkholder found {the formula} for $B$:

Consider for positive $x,y$
$$
F_p(x,y)=\begin{cases} y^p -(p^*-1)^p x^p\,,\,\,\text{if}\,\,  y\le (p^*-1)x\,;\\
p\left( 1-\frac1{p^*}\right)^{p-1} (y+x)^{p-1} (y-(p^*-1)x)\,,\,\,\text{if}\,\, y\ge (p^*-1)x\,.\end{cases}
$$
Consider the solution of an implicit equation:
$$
F_p (|x|, |y|) = F_p(z^{1/p}, B^{1/p}(x,y,z))\,.
$$
If $p\ge 2$ Burkholder's function is the solution of this equation. If $p\in (1,2]$, then one considers $F_p (|y|, |x|) = F_p( B^{1/p}(x,y,z), z^{1/p})$.

Obviously one gets a
\begin{theorem}
\label{001}
$B(0,0,1) =(p^*-1)^p\,,$
\end{theorem}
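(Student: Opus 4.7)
The plan is to observe that, after unfolding the definition, $B(0,0,1)$ coincides with the $p$-th power of the operator norm $\sup_\eps \|T_\eps\|_{L^p\to L^p}$ restricted to mean-zero inputs of unit $L^p$-norm; the claim then reduces directly to Burkholder's Theorem~\ref{Buth}.

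First I would translate the constraints defining $B(0,0,1)$. The requirement $|(g,h_I)| = |(f,h_I)|$ for every $I\in\mathcal{D}$ yields signs $\eps_I \in \{\pm 1\}$ (chosen arbitrarily when $(f,h_I)=0$) with $(g,h_I) = \eps_I(f,h_I)$. The mean constraints $\langle f\rangle_{I_0} = \langle g\rangle_{I_0} = 0$ force the constant terms in the Haar expansions of $f$ and $g$ to vanish, so $g = T_\eps f$. Since $|I_0|=1$, the condition $\langle|f|^p\rangle_{I_0} = 1$ is just $\|f\|_p = 1$. Consequently,
\[
B(0,0,1) \;=\; \sup\bigl\{\|T_\eps f\|_p^p : f\in L^p(I_0),\ \langle f\rangle_{I_0}=0,\ \|f\|_p = 1,\ \eps\in\{\pm 1\}^{\mathcal{D}}\bigr\}.
\]

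Next I would invoke Theorem~\ref{Buth}. The upper bound $B(0,0,1) \le (p^*-1)^p$ is immediate from $\sup_\eps \|T_\eps\|_p = p^*-1$. For the matching lower bound I would appeal to the sharpness half of Burkholder's theorem: the classical extremizing sequences (martingales starting from zero) produce mean-zero $f_n$ with $\|f_n\|_p = 1$ and sign sequences $\eps^{(n)}$ for which $\|T_{\eps^{(n)}} f_n\|_p \to p^*-1$. This yields $B(0,0,1) \ge (p^*-1)^p$.

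There is no substantive obstacle here: granted Theorem~\ref{Buth}, the statement is a bookkeeping exercise aligning the conditions in the definition of $B$ with those governing the martingale-transform norm. The only point worth flagging is that Burkholder's extremizers can always be chosen with zero mean, which is automatic because $T_\eps$ annihilates constants; otherwise the argument is a pure translation of definitions.
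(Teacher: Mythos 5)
Your argument is correct, but it goes in a genuinely different direction from the paper's. The paper treats Theorem~\ref{001} as an immediate consequence of Burkholder's explicit implicit-equation formula: with $x=y=0$ one has $F_p(0,0)=0$, so the equation $F_p(|x|,|y|)=F_p(z^{1/p},B^{1/p})$ at $z=1$ forces $F_p(1,B^{1/p})=0$, and since on the branch $y\le(p^*-1)x$ one has $F_p(1,y)=y^p-(p^*-1)^p$, this yields $B^{1/p}=p^*-1$. In other words, the paper \emph{reads off} $B(0,0,1)$ from the formula and then uses it to state Theorem~\ref{Buth}, whereas you \emph{derive} the value of $B(0,0,1)$ from Theorem~\ref{Buth} by unfolding the variational definition. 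Since the paper never actually proves Burkholder's formula — it proves Theorem~\ref{Buth} directly via the zigzag-concave majorant "shortcut" — your direction is arguably the more self-contained one within the logic the paper actually executes; the formula appears there only as context.

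One point deserves a sentence more than you give it. Writing $B(0,0,1)=\sup_\eps\sup\{\|T_\eps f\|_p^p:\langle f\rangle_{I_0}=0,\ \|f\|_p=1\}$, the upper bound $\le(p^*-1)^p$ is indeed immediate from Theorem~\ref{Buth} because $T_\eps$ always produces a mean-zero output. For the lower bound, however, it is not automatic that restricting the input to mean-zero functions leaves the supremum unchanged: the projection $f\mapsto f-\langle f\rangle$ is not an $L^p$-contraction for $p\ne 2$, so one cannot simply normalize $f-\langle f\rangle$ and expect the ratio to improve. The fact you need — that Burkholder's near-extremizers can be taken with $\langle f\rangle_{I_0}=0$, so the mean-zero restricted supremum is still $(p^*-1)^p$ — is true and standard (the sharpness constructions are martingales started at the origin), but it is the one non-bookkeeping step in your argument and should be flagged explicitly, ideally with a citation to one of Burkholder's sharpness proofs.
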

\noindent which gives Theorem \ref{Buth}, from which we get that \eqref{burk1} is proved right away. In fact,
\begin{proof}[The proof of \eqref{burk1}]
We write $\sup_{\eps}|(T_{\eps}f,g)| \le (p^*-1)\|f\|_p\|g\|_{p'}$, which follows from Theorem \ref{Buth}. But this supremum obviously is equal to 
$$
\Sigma_{I\in {\mathcal D}} |(f,h_I)|\, |(g,h_I)|\,.
$$
Therefore \eqref{burk1} is proved.
\end{proof}

\noindent{\bf Remarks.} 1) As soon as \eqref{burk1} is proved we have our {\bf Bellman} function $B_p$. 

2) It gives all our inequalities like \eqref{sq} and its consequences like \eqref{BI2}.

3) It is {\bf not} Burkholder's function. 

4) The existence of our Bellman function $B_p$ follows from the existence of Burkholder's Bellman function. {\bf These are demographic creatures, they create one another--we saw this in previous sections too.}

\bigskip

We are left to prove Theorem \ref{Buth}. Instead of finding exact formula for $B(x,y,z)$ listed above we will use a certain shortcut (invented already by Burkholder himself). Suppose Burkholder's $B$ is finite.

\begin{proof} [The shortcut proof of Theorem \ref{Buth}]
Along with symmetries \eqref{sym} it has very good concavity properties:
\begin{equation}
\label{unison}
B(x,y,z)-\frac12( B(x+\al, y+\al, z+\beta) + B(x-\al, y-\al, z-\beta)) \ge 0\,,
\end{equation}
if all points lie in $\Om$. Also
\begin{equation}
\label{antiunison}
B(x,y,z)-\frac12( B(x+\al, y-\al, z+\beta) + B(x-\al, y+\al, z-\beta)) \ge 0\,,
\end{equation}
if all points lie in $\Om$.

Inequalities \eqref{unison}, \eqref{antiunison} are left as {\bf exercise}.

Notice that this means that
$$
M(a, b, c):=  B(a+b, a-b, c)
$$ is concave in $(a,c)$, and in $(b, c)$.

{\bf Definition}. Such $M$ is called bi-concave.

{\bf Definition.} Function $\f$ on $\R^2$ is called zigzag concave if
$$
\f(x,y)-\frac12( \f(x+\al, y+\al) + \f(x-\al, y-\al) \ge 0\,,
$$
$$
\f(x,y)-\frac12( \f(x+\al, y-\al) + \f(x-\al, y+\al) \ge 0\,,
$$
or, which is the same as,
$$
\f(x,y) -\frac12 (\f(x^+, y^+) + \f(x^-, y^-)) \ge 0\,,\,\,\text{if}\,\,
$$
$$
 |x^+-x^-|=|y^+-y^-|\,, x=\frac12(x^++x^-)\,,\,\, y=\frac12(y^++y^-)\,.
$$

\begin{theorem}
\label{phi}
Put $\f(x,y):= \sup_{(x,y,z)\in \Om} [ B(x,y,z) -(p^*-1)^p z]$. It is zigzag concave. It is the least zigzag concave majorant of $h(x,y):= |y|^p-(p^*-1)^p |x|^p$. There is {\bf no} zigzag concave majorant $\psi$ such that
$\psi(tx, ty) =t^p \psi(x,y)$ of function $h_c:=|y|^p-c\,|x|^p$ if $c<(p^*-1)^p$.
\end{theorem}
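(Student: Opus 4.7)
My plan is to reduce all three assertions to a single dyadic mechanism: the zigzag inequalities \eqref{unison}--\eqref{antiunison} for $B$ are exactly the dyadic-step conditions on admissible pairs $(f,g)$, since on any dyadic $I$ the constraint $|(f,h_I)|=|(g,h_I)|$ translates to $|\langle f\rangle_{I_+}-\langle f\rangle_{I_-}|=|\langle g\rangle_{I_+}-\langle g\rangle_{I_-}|$. So I will first optimize $B$ in $z$ to obtain a zigzag concave function, and then iterate any competing majorant $\psi$ down the dyadic tree to recover the Burkholder expression $\|g\|_p^p-c\|f\|_p^p$.

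For part (1), I will fix $(x,y)=\tfrac12((x^+,y^+)+(x^-,y^-))$ with $|x^+-x^-|=|y^+-y^-|$, and for arbitrary $z^\pm\ge|x^\pm|^p$ set $z=\tfrac12(z^++z^-)$ and $\beta=\tfrac12(z^+-z^-)$. Convexity of $|\cdot|^p$ gives $z\ge|x|^p$, so $(x,y,z)\in\Om$. Whichever of $y^+-y^-=\pm(x^+-x^-)$ occurs, the triple $(x,y,z),(x^\pm,y^\pm,z^\pm)$ matches the hypothesis of \eqref{unison} or \eqref{antiunison}; after subtracting $(p^\star-1)^p z$ and taking sup over $z^\pm$, the required zigzag concavity of $\f$ falls out.

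For part (2), to verify $\f\ge h$, I take $z=|x|^p$ in the definition of $B$: Jensen's inequality forces every admissible $f$ to equal the constant $x$, so all Haar coefficients of $f$ and hence of $g$ vanish, forcing $g\equiv y$ and $B(x,y,|x|^p)=|y|^p$. For minimality, given any zigzag concave $\psi\ge h$ and an admissible pair $(f,g)$ constant on the $N$-th dyadic generation, I will iterate the inequality $\psi(\langle f\rangle_I,\langle g\rangle_I)\ge\tfrac12(\psi(\langle f\rangle_{I_+},\langle g\rangle_{I_+})+\psi(\langle f\rangle_{I_-},\langle g\rangle_{I_-}))$ from $I_0$ down, obtaining $\psi(x,y)\ge\int_{I_0}\psi(f,g)\,dt\ge\|g\|_p^p-(p^\star-1)^p\|f\|_p^p$. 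Supping over all such pairs with $\langle|f|^p\rangle_{I_0}=z$ yields $\psi(x,y)\ge B(x,y,z)-(p^\star-1)^p z$, hence $\psi\ge\f$.

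For part (3), suppose $\psi\ge h_c$ is zigzag concave and satisfies $\psi(tx,ty)=t^p\psi(x,y)$. Any $t\neq 1$ forces $\psi(0,0)=0$. The same dyadic iteration applied to admissible pairs with $\langle f\rangle_{I_0}=\langle g\rangle_{I_0}=0$ then gives $0=\psi(0,0)\ge\|g\|_p^p-c\|f\|_p^p$, i.e.\ $\|g\|_p\le c^{1/p}\|f\|_p$ for every martingale-transform pair. Since $c<(p^\star-1)^p$, this contradicts the existence of Burkholder's explicit extremal martingale transforms whose $p$-norm ratio can be made arbitrarily close to $p^\star-1$, a fact available independently of the Bellman construction. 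The hard part throughout is the iteration in (2)--(3): it is transparent on dyadic step functions but requires upper semicontinuity of $\psi$ and martingale convergence to pass to general $L^p$ admissible pairs; the clean workaround is to note that the sup defining $B(x,y,z)$ is already attained (or approximated) on simple test pairs, so only finite-depth iteration is ever needed.
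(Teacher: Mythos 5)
Your parts (1) and (2) track the paper closely. For zigzag concavity you do exactly what the paper does: pick $z^\pm\ge|x^\pm|^p$, set $z=\tfrac12(z^++z^-)$, apply \eqref{unison} or \eqref{antiunison}, subtract $c_p z$ and pass to suprema. For $\varphi\ge h$ you both use the same degeneration $z\to|x|^p{+}$. Your minimality argument is a slight variation: rather than lifting $\psi$ to $\Psi=\psi+c_p z$ and citing abstractly that $\Psi$ dominates $B$ (which the paper leaves as an ``exercise''), you run the dyadic iteration directly on $\psi$ along an admissible pair, use $\psi\ge h$ at the leaves, and then optimize over pairs of a given $\langle|f|^p\rangle$. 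This is really the content of the paper's exercise made explicit; it is a cleaner way to present the same idea and is correct.

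Where you genuinely diverge is part (3). The paper proves non-existence of a homogeneous zigzag concave majorant of $h_c$ for $c<c_p$ by a self-contained analytical argument (Lemma~\ref{zz}): pass to the one-variable function $g(s)=\varphi\bigl(\tfrac{1-s}{2},\tfrac{1+s}{2}\bigr)$, translate the two second-order zigzag inequalities into ODE inequalities for $g$, and show that no linear (hence no concave) $g$ satisfying them can majorize $H_c$ unless $c\ge c_p$. You instead reduce to ``$\psi(0,0)=0$ forces $\|g\|_p\le c^{1/p}\|f\|_p$ for all admissible zero-mean pairs'' and then appeal to the existence of near-extremal martingale transforms with ratio arbitrarily close to $p^*-1$. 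This is logically valid, but it outsources exactly the ingredient the Bellman machinery is being used to establish: in the text the sharp value $p^*-1$ comes out of $B(0,0,1)=(p^*-1)^p$ via the implicit formula, and part (3) of Theorem~\ref{phi} is part of how one sees that constant is optimal. So your route trades the paper's explicit one-dimensional analysis for an external input about extremal examples; it is fine if you are willing to cite Burkholder's examples as a black box, but the paper's version is more self-contained and does not presuppose sharpness. If you want your part (3) to stand alone, you should either construct the near-extremal admissible pair or carry out the $g(s)$ reduction.

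Two minor remarks: (i) in part (1) you should make the order of quantifiers explicit -- you fix near-optimal $z^\pm$ for $\varphi(x^\pm,y^\pm)$ first and then form $z$, or equivalently you take the sup over $z^\pm$ at the end, as you actually do; (ii) your closing note about only needing finite-depth iteration because $B$ is approximated by step pairs is correct and addresses the only real technical wrinkle in the iteration.
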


\begin{proof}
Put $c_p= (p^*-1)^p$. Fix $(x^-, y^-)$ and $(x^+, y^+)$. Find $z^-$ which almost gives supremum in $\f(x^-, y^-)=\sup [B(x^-, y^-, z)- c_p z]$. Do the same for $\f(x^+, y^+) $ to find $z^+$.
Then
$$
B(x^-, y^-, z^-)- c_p z^- \le \f(x^-, y^-) \le B(x^-, y^-, z^-)- c_p z^- +\eps\,,
$$
$$
B(x^+, y^+, z^+)- c_p z^+ \le \f(x^+, y^+) \le B(x^+, y^+, z^+)- c_p z^+ +\eps\,.
$$
Let $x=\frac12(x^++x^-)\,,\,\, y=\frac12(y^++y^-)$ and put $z=\frac12(z^++z^-)$. Then
$$
\f(x,y) =\sup\dots \ge B(x,y,z) -c_p z = B(x,y,z)-c_p\frac12(z^++z^-) \ge
$$
$$
\frac12 (B(x^-, y^-, z^-)- c_p z^-) + \frac12 (B(x^+, y^+, z^+)- c_p z^+) \ge \frac12(\f(x^-, y^-) + \f(x^+, y^+) )-2\eps\,.
$$
So $\f$ is zigzag concave. Also
$$
\f(x,y) =\sup\dots\ge \lim_{z\rightarrow |x|^p+} [B(x,y,z)-c_p z] \ge |y|^p -c_p |x|^p = h(x,y)\,.
$$
So $\f$ is a zigzag concave majorant of $h$. Why the least?  Let $\psi$ be any zigzag concave function such that
$$
h \le \psi\,.
$$
Put $\Psi:= \psi(x,y) + c_p z$. Then it is easy to see that $\Psi$ satisfies \eqref{unison}, \eqref{antiunison}. Also on $\pd\Om=\{z=|x|^p\}$ we have
$$
\Psi(x, y, z) \ge h(x,y) + c_p z= h(x,y) +c_p |x|^p =|y|^p\,.
$$
Then  combination of the last inequality and the fact that $\Psi$ satisfies \eqref{unison}, \eqref{antiunison} gives (attention {\bf exercise!})
$$
\Psi(x,y,z)\ge B(x,y,z)\,.
$$
This a non-trivial exercise.  But then trivially for every $(x,y)$
$$
\psi(x,y) =\sup_{z: (x,y,z)\in \Om}[\Psi(x,y,z) -c_p z] \ge \sup_{z: (x,y,z)\in \Om}[B(x,y,z) -c_p z] =\f(x,y)\,.
$$

We need now to prove that $h_c, c<c_p$ does not have zigzag concave homogeneous majorant.

This and more is done in 
\begin{lemma}
\label{zz}
Function $h_c, c<c_p$ does not have zigzag concave homogeneous majorant. If $c=c_p$, then the function $h_{c_p}=:h$ has such majorant given by
$$
\Phi_0(x,y) :=\begin{cases} |y|^p - (p^*-1)^p =\,h(x,y) \,,\,\,\text{if}\,\, h\le 0\,;\\
p\left(1-\frac1{p^*}\right)^{p-1} (|y|+|x|)^{p-1} (|y|- (p^*-1)|x|)\,,\,\,\text{if}\,\, h>0\,.\end{cases}
$$
Another zigzag concave majorant of $h=h_{c_p}$  (but not the least) is given by
$$
\Phi(x,y) := p\left(1-\frac1{p^*}\right)^{p-1} (|y|+|x|)^{p-1} (|y|- (p^*-1)|x|)\,.
$$
\end{lemma}

\noindent{\bf Remark.} The fact that $\Phi_0(x,y)\le 0$ if $|x|\ge |y|$ will be crucial for the proof of Theorem \ref{Buth}.

\begin{proof}[The proof of Lemma \ref{zz}]
We work in the firs quadrant.  Homogeneous $\f$ can be written as 
$$
\f(x,y) = (x+y)\f (\frac{x}{x+y}, \frac{y}{x+y})\,,\, s:= \frac{y-x}{y+x}\,,\,\, \text{then}\,\, \frac{1+s}{2}= \frac{y}{x+y}\,,\, \frac{1-s}{2}= \frac{x}{x+y}\,.
$$
$$
s'_x = -\frac{1+s}{x+y}\,,\, s'_y = \frac{1-s}{x+y}\,.
$$ 
We put $g(s):= \f ( \frac{1-s}{2}, \frac{1+s}{2})$. Next we list some results of computations:
$$
\f_x = p(x+y)^{p-1} g(s) + (x+y)^{p-1} g'(s) (1+s)\,,\, \f_y= p(x+y)^{p-1} g(s) + (x+y)^{p-1} g'(s) (1-s)\,.
$$
$$
\f_{xx}= p(p-1) (x+y)^{p-2} g(s) - 2(p-1)(x+y)^{p-2} g'(s) (1+s) + (x+y)^{p-2} g''(s)(1+s)^2\,,
$$
$$
\f_{yy}= p(p-1) (x+y)^{p-2} g(s) + 2(p-1)(x+y)^{p-2} g'(s) (1-s) + (x+y)^{p-2} g''(s)(1-s)^2\,,
$$
$$
\f_{xy}= p(p-1) (x+y)^{p-2} g(s) - 2(p-1)(x+y)^{p-2} g'(s) s - (x+y)^{p-2} g''(s)(1-s^2)\,.
$$
So on $x+y=1$
$$
\f_{xy} = -[(1-s^2) g''(s) +2(p-1) s g'(s) -p(p-1) g(s)\,,
$$
$$
\f_{xx} +\f_{yy} = 2(1+s^2) g''(s) - 4(p-1) s g'(s) +2p(p-1) g(s)\,.
$$
So  combining the two:
$$
(\pd_{x-y})^2 \f = \f_{xx}-2\f_{xy} + \f_{yy} =  4 g''(s)\,.
$$
$$
(\pd_{x+y})^2 \f = \f_{xx}+ 2\f_{xy} + \f_{yy} = 4 g''(s) + 4\f_{xy} =
4( s^2 g''(s) +(p-1) (-2sg'(s) +pg(s)))\,.
$$
Zigzag concave means the last two lines have sign $\le 0$. To find $\f$ satisfying these two $\le 0$ differential inequalities, let us try first to find it in such a way that the first inequality is equality! Hence we seek for the linear $g$! Then put 
\begin{equation}
\label{form}
g(s) = a \left( \frac{1+s}{2} -\rho\frac{1-s}{2}\right)\,.
\end{equation}

Then the second inequality $s^2 g''(s) +(p-1) (-2sg'(s) +pg(s))\le 0$ becomes 
\begin{equation}
\label{2conv}
2sg'(s) -p g(s) \ge 0\,,\,\,\text{on}\,\, [-1,1]\,.
\end{equation}
It is satisfied (as $g$ is linear) if and only if  it is satisfied in $-1$ and $1$.
We get
$$
a(1+\rho-p)\ge 0\,,\, -a(1+\rho +p\rho) \ge 0\,.
$$
As $g$ is greater than $\left (\frac{1+s}{2}\right)^p -c\left(\frac{1-s}{2}\right)^p$, it is positive at $s=1$, so $a>0$. then we get from previous inequalities that
$$
\rho \ge \max \left(p-1, \frac1{p-1}\right)= p^*-1\,.
$$
Let us try linear $g$ with $\rho= p^*-1$.
So $g$ has zero at $s_p$ such that
$$
p^*-1 = \frac{1+s_p}{1-s_p}\,.
$$
But if 
$$
h(x,y)= (x+y)^p H(s)\,,\, H(s):= \left (\frac{1+s}{2}\right)^p -c_p\left(\frac{1-s}{2}\right)^p\,,
$$
then $H$ has zero at the same point $s_p$. Now let us find $a$ from the condition
$$
H'(s_p) = g'(s_p) \Rightarrow a= p\left( 1 -\frac1{p^*}\right)^{p-1}\,.
$$
Where $H$ is concave on $[-1,1]$? Inflection point $i_p$ is such that
$$
\left (\frac{1+s}{2}\right)^{p-2} -c_p\left(\frac{1-s}{2}\right)^{p-2}=0\,.
$$
So it is clear that it is always $> s_p$. As $H$ is concave on $[-1, i_p]$ it is concave on $[-1, s_p]$ (and a little bit on the right of $s_p$ too).

It is also easy to see that on $[-1,s_p]$
\begin{equation}
\label{also}
s^2 H''(s) +(p-1) (-2s H'(s) +p H(s)) \le 0\,.
\end{equation}

{\bf This is an  exercise}.

\bigskip

Put now
$$
\tilde g(s) =\begin{cases} \text{our linear}\,\, g(s)\,\, s\in [s_p,1]\,;\\
H(s)\,,\,\, s\in [-1,s_p]\,.\end{cases}
$$
Then $\Phi_0(x,y)= (x+y)^p \tilde g(\frac{x}{x+y}, \frac{y}{x+y})$ is exactly the same $\Phi_0$ as in Lemma \ref{zz}'s statement. We just checked that it is a zigzag concave majorant of $h(x,y)$. We also checked that $\Phi(x,y)= (x+y)^p g(\frac{x}{x+y}, \frac{y}{x+y})$, where $g$ is our linear function built above, is zigzag concave majorant of $h(x,y)$ as well. It is exactly function $\Phi$ as in Lemma \ref{zz}'s statement. 

\bigskip

Now let $c<c_p$. 
Linear function cannot be higher than corresponding $H_c$ on $[-1,1]$ and satisfy \eqref{2conv}. In fact, if $\al s +\beta$ is higher, then $\al +\beta>0$. Also \eqref{2conv} gives
$$
2s\al -p(\al +\beta) \ge 0\Rightarrow \al (2-p)s -p\beta\ge 0, \al(p-2) -p\beta\ge0\,.
$$
Then $\beta <0, \al >0$. So linear function is positive in $1$ and negative  at zero. So it must vanish on $[-1,1]$, hence it has the form \eqref{form}. Hence, we we can see that their minorant $H_c$ can have only $c\ge c_p$. In fact, we remember that $\rho\ge p^*-1$ in \eqref{form}. Then the zero of our linear function must be $\ge  c_p$. But if our linear function is a majorant of $H_c$ with $c<c_p$ it is also a majorant of $H_{c_p}$. Therefore, its zero must be $<c_p$. This is a contradiction, and a linear solution of two differential inequalities will not have minorant with $c<c_p$. Concave solution will not have such minorants  either. {\bf Exercise}.

Lemma \ref{zz} is finished.

\end{proof}

Theorem \ref{phi} is completely proved.

\end{proof}

\begin{proof}[Finishing the proof of Theorem \ref{Buth}. The real case.]

Now that we have function $\Phi$ ($\Phi_0$ will work too) that is

1) zigzag concave on the plane,

2) is such that $\Phi(x,y) \ge h(x,y):= |y|^p - (p^*-1)^p |x|^p$,

\noindent we can do the following. Fix $f, g$ step functions on $I:=[0,1]$. Consider points $P=(x,y)=(\la f\ra_I , \la g\ra_I)$, $P^+=(x^+, y^+)=(\la f\ra_{I_+} , \la g\ra_{I_+})$, $P^-=(x^-,y^-)=(\la f\ra_{I_-} , \la g\ra_{I_-})$.
Notice that of course  $P=\frac12(P^++P^-)$. Also $|x^+-x^-|=|y^+-y^-|$ because this differences are $\frac{2}{\sqrt{|I|}}|(f, h_I)$ and $\frac{2}{\sqrt{|I|}}|(g, h_I)$ correspondingly, and we assumed in Theorem \ref{Buth} that for every dyadic interval $|(f, h_I)|= |(g,h_I)|$. Let also $|x|\ge |y|$ (for example both are zeros)

Then we can use properties of $\Phi$: 
$$
0\ge \Phi(x,y) \ge \Phi(x^+,y^+) |I_+|   +  \Phi(x^-,y^-) |I_-| \,.
$$
As intervals $I^+, I^-$ are as good as $I$ we can repeat this for them. Just iterating this procedure and denoting by $I^{\sigma}$ dyadic intervals of size $2^{-n}$ with $\sigma$ being any string of $\pm$ of length $n$ we get
$$
\Sigma_{\sigma} \Phi(x^\sigma,y^\sigma) |I^\sigma| \le 0\,.
$$
Combine this with property 2) above. Then
$$
\Sigma_{\sigma} |y^\sigma|^p |I^\sigma| \le (p^*-1)^p \Sigma_{\sigma} |x^\sigma|^p |I^\sigma| \,.
$$
But by our construction $y^\sigma=\la g\ra_{I^\sigma}, x^\sigma=\la f \ra_{I^\sigma}$. So we get
$$
\Sigma_{\sigma} |\la g\ra_{I^\sigma}|^p |I^\sigma| \le (p^*-1)^p \Sigma_{\sigma} |\la f \ra_{I^\sigma}|^p |I^\sigma| \,.
$$
Going to the limit when $n\rightarrow\infty$ we get $\la |g|^p \ra_I\le (p^*-1)^p \la |f|^p\ra_I$, which gives the claim of Theorem \ref{Buth} in the case of real-valued $f,g$.

\end{proof}

\begin{proof}[Finishing the proof of Theorem \ref{Buth}. The complex-valued and Hilbert-valued cases.]

A certain ``miracle" happens: $\Phi, \Phi_0$ have extra properties of symmetry, not apparent at this moment.

{\bf Extra symmetry.}
 Consider $\f(x,y) = \Phi(\sqrt{x_1^2+x_2^2}, \sqrt{y_1^2+y_2^2})$. We use standard notations, now $x, y$ are vectors, $\|\cdot\|$ is the norm of a vector, $dx:= (dx_1, dx_2), dy :=(dy_1, dy_2)$ are also arbitrary vectors. 
 
 We want to see that 
 
 1) $-d^2\f:=-(H_\f\diff,\diff) \ge 0$, if $\|dx\|=\|dy\|$. This is ``zigzag concavity'' direct analog.
 
 2) $\f(x,y) \ge h(\|x\|, \|y\|)$.
 
 The second is obvious, but the first happens by a ``miracle".
 Let us prove it and see, where the ``miracle" happens.  
 
 Calculations (really abusing the language we understand that $\Phi_x, \Phi_y$ are partial derivatives of $\Phi$ with respect to the first and the second variables):
 $$
 \f_{x_1}=\Phi_x\cdot \frac{x_1}{\sqrt{x_1^2+x_2^2}}\,,\,  \f_{x_2}=\Phi_x\cdot \frac{x_2}{\sqrt{x_1^2+x_2^2}}\,.
 $$
 $$
 \f_{x_1x_1}=\Phi_{xx}\cdot \frac{x_1^2}{x_1^2+x_2^2}+\Phi_x \frac{x_2^2}{(x_1^2+x_2^2)^{3/2}}\,,\,  \f_{x_2x_2}=\Phi_{xx}\cdot \frac{x_2^2}{x_1^2+x_2^2}+\Phi_x \frac{x_1^2}{(x_1^2+x_2^2)^{3/2}}\,.
 $$
 $$
  \f_{x_1x_2}=\Phi_{xx}\cdot \frac{x_1x_2}{x_1^2+x_2^2}-\Phi_x \frac{x_1x_2}{(x_1^2+x_2^2)^{3/2}}\,.
$$
Symmetrically for $y$ derivatives.
Also
$$
\f_{x_iy_j} =\Phi_{xy} \cdot \frac{x_iy_j}{\sqrt{x_1^2+x_2^2}\sqrt{y_1^2+y_2^2}}\,,\, i,j=1,2\,.
$$
Therefore,
$$
-(H_\f\diff, \diff) = \frac{\Phi_x}{\|x\|}\left(\frac{x_2dx_1-x_1dx_2}{\|x\|}\right)^2 +\frac{\Phi_y}{\|y\|}\left(\frac{y_2dy_1-y_1dy_2}{\|y\|}\right)^2 +
$$
$$
\Phi_{xx}\left(\frac{x_2dx_1+x_1dx_2}{\|x\|}\right)^2 +2\f_{xy}\left(\frac{x_2dx_1+x_1dx_2}{\|x\|}\right)\left(\frac{y_2dy_1+y_1dy_2}{\|y\|}\right) +\f_{yy}\left(\frac{y_2dy_1+y_1dy_2}{\|y\|}\right)^2
$$
$$
= \frac{\Phi_x}{\|x\|}\|\hat{dx}\|^2 + \frac{\Phi_y}{\|y\|}\|\hat{dy}\|^2 + \Phi_{xx}(dx, \frac{x}{\|x\|})^2+ 2 \Phi_{xy}(dx, \frac{x}{\|x\|})(dy, \frac{y}{\|y\|}) + \Phi_{yy}(dy, \frac{y}{\|y\|})^2\,,
$$
where $\hat{dx}, \hat{dy}$ are projections of vectors $dx, dy$ on direction orthogonal to $x,y$ correspondingly.

Recall that up to a positive constant (which we drop now abusing the language)
$$
\Phi(x,y) = (y-(p-1)x)(x+y)^{p-1}\,,\, \text{if} \,\, p\ge 2\,,
$$
and
$$
(p-1) \Phi(x,y) = -(x-(p-1)y)(x+y)^{p-1}\,,\, \text{if} \,\, p\le 2\,.
$$
Let us consider $p\ge 2$, the other case being similar. Looking at the formulae above we get by direct calculation with formula for $\Phi$ that for any numbers $h', k'$

$$
\Phi_{xx} h'^2 + 2 \Phi_{xy} h'k' +\Phi_{yy} k'^2 =-p(p-1) (x+y)^{p-2}(h'^2-k'^2) -p(p-1)(p-2) x(x+y)^{p-3}(h'+k')^2\,.
$$
(By the way we immediately see that this form is $\le 0$ if $|k'|=|h'|$, which is infinitesimal version of zigzag concavity.)

Now let us combine our formulae, putting $h'=(dx, \frac{x}{\|x\|}), k' =(dy, \frac{y}{\|y\|})$. Then
$$
(H_\f\diff,\diff) =  \frac{\Phi_x}{\|x\|}\|\hat{dx}\|^2 + \frac{\Phi_y}{\|y\|}\|\hat{dy}\|^2  - p(p-1) (\|x\|+\|y\|)^{p-2}(h'^2-k'^2) 
$$
$$
-p(p-1)(p-2) \|x\|(\|x\|+\|y\|)^{p-3}(h'+k')^2\,.
$$
Let us look at the first line of the last formula. Calculate 
$$
\frac{\Phi_x}{\|x\|}\|\hat{dx}\|^2 + \frac{\Phi_y}{\|y\|}\|\hat{dy}\|^2 =\left(\frac{\Phi_x}{\|x\|} +\frac{\Phi_y}{\|y\|}\right)\|\hat{dy}\|^2 + \frac{\Phi_x}{\|x\|}( k'^2 -h'^2) + Term\,,
$$
where $Term:= \frac{\Phi_x}{\|x\|} (\|h\|^2-\|k\|^2)$. This is just because $\|\hat{dx}\|^2+h'^2=\|h\|^2$, and the same is true for $k$. In particular, 
\begin{equation}
\label{term}
Term =0\,\,\text{if}\,\, \|h\|=\|k\|\,,\,\text{and}\,\, Term \le 0\,,\text{if}\,\,\|h\|\ge\|k\|\\,.
\end{equation}
In fact,
\begin{equation}
\label{fx}
\frac{\Phi_x}{\|x\|} = -p(p-1) (\|x\|+\|y\|)^{p-2}<0\,.
\end{equation}
Combine three last formulae. Then we have
$$
(H_\f\diff,\diff) =\left(\frac{\Phi_x}{\|x\|} +\frac{\Phi_y}{\|y\|}\right)\|\hat{dy}\|^2 
$$
$$
 +Term -p(p-1)(p-2) \|x\|(\|x\|+\|y\|)^{p-3}(h'+k')^2\,.
$$
The second line is obviously negative (see \eqref{term}). To have the first line negative it is necessary and sufficient to have
\begin{equation}
\label{miracle} 
\left(\frac{\Phi_x}{\|x\|} +\frac{\Phi_y}{\|y\|}\right)\le 0\,.
\end{equation}
Calculate: 
$$
\frac{\Phi_y}{\|y\|}=p (\|y\|-(p-2)\|x\|) (\|x\|+\|y\|)^{p-2}\,.
$$
Combine this with \eqref{fx} to get
$$
\left(\frac{\Phi_x}{\|x\|} +\frac{\Phi_y}{\|y\|}\right)= -(\|x\|+\|y\|)^{p-2}\left( p(p-1) -p +p(p-2)\frac{\|x\|}{\|y\|}\right)=
$$
\begin{equation}
\label{fxfy}
-p(p-2)\frac{(\|x\|+\|y\|)^{p-1}}{\|y\|} \le 0\,,\text{if}\,\, p\ge 2\,.
\end{equation}

The case $p<2$ goes along the same lines with corresponding change in the formula for $\Phi$. The proof of Theorem \ref{Buth} is finished in the complex-valued case.
One can notice that the same proof works in any Hilbert space, not just $2$-dimensional as above, {\bf exercise}!

\end{proof}

Theorem is finally completely proved.

\end{proof}

We want to remember a formula that has been just obtained ($p\ge 2$):

$$
(H_\f\diff,\diff) =-p(p-2)\frac{(\|x\|+\|y\|)^{p-1}}{\|y\|}\|\hat{dy}\|^2-p(p-1) (\|x\|+\|y\|)^{p-2} (\|dx\|^2-\|dy\|^2)
$$
\begin{equation}
\label{d2fb}
 -p(p-1)(p-2) \|x\|(\|x\|+\|y\|)^{p-3}\left((dx, \frac{x}{\|x\|})+(dy, \frac{y}{\|y\|})\right)^2\,, 
\end{equation}
where $\|\hat{dy}\|^2= \|dy\|^2-(dy, \frac{y}{\|y\|})^2$.
Also
\begin{equation}
\label{maj1}
p\left(1-\frac1{p^*}\right)^{p-1} \f \ge \|y\|^p-(p^*-1)^p\|x\|^p\,.
\end{equation}
On the other hand, if $1<p<2$, we know that 
$$
\f(x,y) =(\|y\|^p-(p^*-1) \|x\|)(\|x\|+\|y\|)^{p-1}
$$
 will satisfy
$$
(H_\f\diff,\diff) =-p(2-p)\frac{(\|x\|+\|y\|)^{p-1}}{\|x\|}\|\hat{dx}\|^2  -p(p-1) (\|x\|+\|y\|)^{p-2} (\|dy\|^2-\|dx\|^2)
$$
\begin{equation}
\label{d2fm}
 -p(p-1)(2-p) \|x\|(\|x\|+\|y\|)^{p-3}\left((dx, \frac{x}{\|x\|})+(dy, \frac{y}{\|y\|})\right)^2\,, 
\end{equation}
where $\|\hat{dx}\|^2= \|dx\|^2-(dx, \frac{x}{\|x\|})^2$.
The same majorization \eqref{maj1} happens for $1<p<2$ as well.

\vspace{.3in}

Inequalities \eqref{BI1}, \eqref{BI2} are completely done.
However, to move further, in particular to \eqref{BI3}, \eqref{BI4}, we need a new tool={\bf stochastic integrals}.

\section{Stochastic Integrals. It\^o's formula}
\label{ito}

Let $w(s):=w_s$ denote Brownian motion started at $0$, that is $w_0=0$, and
for all $t_1<t_2<t_3$, random variables $w_{t_2}- w_{t_1}$, $w_{t_3}- w_{t_2}$ are Gaussian independent with zero average and variances $\sqrt{t_2-t_1}$, $\sqrt{t_3-t_2}$ correspondingly.

We want to understand what does it mean
$$
\int_a^b \xi(t) dw_t\,.
$$
It is {\bf not} the Riemann sum defintion as the following example shows.

\noindent{\bf Example.}  Consider two simplest Riemann sums built on a partition of the interval $[a,b]$:
$$
\Sigma_1:= \sum_{i=1}^m  w(t_{i-1})(w(t_i)-w(t_{i-1}))\,,
$$
$$
\Sigma_2:= \sum_{i=1}^m  w(t_{i})(w(t_i)-w(t_{i-1}))\,.
$$
If refinement is small, we should have had (if stochastic integral were a Riemann sum thing) that these two random variables $\Sigma_1$ and $\Sigma_2$ are close. Let us see, whether this is the case.

Notice that uniformly (when the partition changes) they are in $L^2(\Omega,  \mathcal{F}, \mathbb{P})$, where $(\Omega,  \mathcal{F}, \mathbb{P})$ is the probability space on which Brownian motion is given.
$$
\bE(\Sigma_1)^2=\sum_{i<j}\bE (w(t_{i-1})(w(t_i)-w(t_{i-1}))w(t_{j-1})\cdot 
(w(t_i)-w(t_{i-1}))+
$$
$$
\sum_i \bE ((w(t_{i-1})^2(w(t_i)-w(t_{i-1})^2) = \sum_{i<j}\bE (w(t_{i-1})(w(t_i)-w(t_{i-1}))w(t_{j-1})\cdot 
\bE(w(t_i)-w(t_{i-1})+
$$
$$
\sum_i t_i(t_i-t_{i-1}) \le b(b-a)\,.
$$
Also
$$
\bE(\Sigma_2)^2=2\bE \Sigma_1^2 + 2 \bE(\sum_i((w(t_i)-w(t_{i-1}))^2)^2\le
$$
$$
2b(b-a) + 2 \bE (\sum \xi_i^2)^2\,,
$$
where $\xi_i:= w(t_i)-w(t_{i-1})$ are Gaussian independent with average zero and $\sigma_i^2=|t_i-t_{i-1}|$. Then
$$
\bE (\sum \xi_i^2)^2 =2\sum_{i<j}\bE\xi_i^2\bE\xi_j^2 +\bE \xi_i^4 =
$$
$$
2\sum_{i<j} |t_i-t_{i-1}||t_j-t_{j-1}| + 3\sum_i |t_i-t_{i-1}|^2 \le 5(b-a)^2\,.
$$

The correct definition of integral should have been such that if $\Sigma_1, \Sigma_2$ are uniformly in $L^2$ and are both the Riemann sums, they should have 	been close in some sense. Suppose they are close (as random variables) in probability (one of the weakest sense possible). Then we use a simple {\bf exercise} that if $\|f_n\|_{L^2(\bP)} \le C$, $f_n \Rightarrow 0$, then
$\|f_{n_k}\|_{L^1(\bP)}\rightarrow 0$.

In our case, nothing like that happened: 
$$
\bE \Sigma_1=0\,,
$$
$$
\bE \Sigma_2 =\bE\Sigma_1 + \bE (\sum_i (w(t_i)-w(t_{i-1}))^2= \sum_i (t_i-t_{i-1})= b-a \neq 0\,.
$$
We understand now that stochastic integral $\int_a^b \xi(t)\, dw(t)$ is a much more subtle thing than Riemann sum integral. Stochastic integrals were understood by Kioshi It\^o.

\bigskip

\subsection{A bit on It\^o's definition}. 

Let $B\cF$ be a sigma algebra of sets $A\subset \R\times \Omega$ such that
for every $t\in [a,b]$ we have $A\cap ((-\infty, t]\times \Omega)$ is in $B_t\times \cF_t$, where $B_t$ is Borel sigma algebra on $(-\infty,t]$, $\cF_t$ is a sigma algebra generated by $\{w_s\}_{s\le t}$. Let $M_2[a,b]$ is the set of functions measurable with respect to $B\cF$ such that

(a) $f(t)$ is measurable with respect to $\cF_t$ for each $t$,

(b) with probability $1$, $\int_a^b|f(t)|^2\,dt <\infty$.

For all such random  functions (random processes) It\^o defines
\begin{equation}
\label{ito}
\int_a^b f(t)\,dw(t)\,.
\end{equation}

\noindent{\bf Definition.} $f\in M_2[a,b]$ is called a step function if there exists a partition such that
$f(t) =f(t_i)(\omega)\,,\text{for}\,\, t\in [t_i,t_{i+1})$.
We introduce the stochastic integral for them in a natural way
$$
\int_a^b f dt:= \sum_i f(t_i)(\omega)\cdot (w(t_{i+1})-w(t_i))\,.
$$

\begin{lemma}
\label{step}
For every $f\in M_2[a,b]$ there exits a sequence of step functions as above such that
with probability $1$
$$
\lim_{n\rightarrow\infty} \int_a^b |f(t)-f_n(t)|^2\,dt =0\,.
$$
Moreover if in addition
$$
\bE \int_a^b |f(t)|^2\, dt <\infty\,,
$$
then step functions can be chosen to have
$$
\lim_{n\rightarrow\infty}\bE \int_a^b |f(t)-f_n(t)|^2\,dt =0\,.
$$
\end{lemma}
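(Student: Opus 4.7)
The plan is to build the step-function approximation in three successive reductions: truncate in magnitude, mollify in time (preserving adaptedness), then sample on a dyadic partition. At each stage one checks that measurability with respect to the progressive $\sigma$-algebra $B\cF$ is preserved, and then a diagonal procedure combines the three approximations.

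First I would truncate. Set $f^{(N)}(t,\omega) := f(t,\omega)\mathbf{1}_{|f(t,\omega)|\le N}$. This is again in $M_2[a,b]$ (the indicator is $B\cF$-measurable because $|f|$ is), and since $\int_a^b |f|^2\,dt<\infty$ a.s., dominated convergence in $\omega$ gives $\int_a^b|f-f^{(N)}|^2\,dt\to 0$ a.s. Under the stronger integrability hypothesis $\bE\int_a^b|f|^2\,dt<\infty$, the integrand $\int_a^b|f-f^{(N)}|^2\,dt$ is dominated by $\int_a^b|f|^2\,dt$, so a further application of dominated convergence shows $\bE\int_a^b|f-f^{(N)}|^2\,dt\to 0$. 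Thus it suffices to approximate bounded $f$.

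Next, given bounded adapted $f$ (extend $f(t,\omega)=0$ for $t<a$), I would mollify in time from the left, setting
$$
g_h(t,\omega):=\frac1{h}\int_{t-h}^{t} f(s,\omega)\,ds\,, \quad t\in[a,b]\,, \; h>0\,.
$$
The key point for adaptedness is that the averaging window $[t-h,t]$ lies in $(-\infty,t]$, so $g_h(t,\cdot)$ is $\cF_t$-measurable; joint measurability with respect to $B\cF$ follows from Fubini. Moreover $g_h$ is continuous in $t$ for each fixed $\omega$, bounded by $\|f\|_\infty$, and by Lebesgue's differentiation theorem $g_h(t,\omega)\to f(t,\omega)$ for a.e.\ $t$, for almost every $\omega$. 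Bounded convergence then yields $\int_a^b|f-g_h|^2\,dt\to 0$ a.s.\ as $h\to 0$, and also in expectation when $\bE\int_a^b|f|^2\,dt<\infty$. Finally, for the now continuous adapted $g_h$, I take the Riemann-type sampling $f_n(t,\omega):=g_h(t_i^{(n)},\omega)$ for $t\in[t_i^{(n)},t_{i+1}^{(n)})$ where $t_i^{(n)}=a+i(b-a)/n$; these are step functions in the required sense and, by uniform continuity of $s\mapsto g_h(s,\omega)$ on $[a,b]$, converge uniformly in $t$ a.s., hence in $L^2([a,b])$ pathwise and (being uniformly bounded) in $L^2([a,b]\times\Omega)$. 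A standard diagonal selection over $(N,h,n)$ produces the desired single sequence of step functions.

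The main subtlety I expect is the adaptedness bookkeeping in the mollification step: a symmetric kernel would destroy the $\cF_t$-measurability, which is why one must average only over the past. A secondary technical point is that $f$ is assumed $B\cF$-measurable but only pathwise square-integrable in the first part, so the truncation and mollification steps must be justified pathwise using Lebesgue's differentiation theorem for a.e.\ $\omega$ rather than via $L^2$ density; upgrading to expectation-level convergence in the second part is then routine by dominated convergence.
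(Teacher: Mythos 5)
The paper does not actually prove Lemma \ref{step}; it is stated as a standard building block of It\^o's construction and implicitly deferred to the textbook references \cite{GS}, \cite{We}, so there is no in-paper proof to compare against. Your three-stage reduction---truncate in magnitude, mollify in time using a \emph{past-supported} kernel to preserve $\cF_t$-measurability, then sample the resulting continuous adapted process on a dyadic partition, and finally diagonalize---is precisely the classical textbook argument and is correct; the key subtlety you flag (that a symmetric mollifier would look into the future and destroy adaptedness) is the genuine one, and the pathwise use of Lebesgue differentiation on the truncated, bounded process followed by dominated convergence to upgrade to expectation-level convergence is all in order. One small refinement worth making explicit: because you extend $f$ by $0$ to the left of $a$, the left-averages $g_h(t)$ need not converge to $f(t)$ for $t$ in the boundary strip $[a,a+h]$; but after truncation the integrand in $\int_a^b|f-g_h|^2\,dt$ is uniformly bounded and the offending strip has measure $h\to 0$, so the pathwise $L^2([a,b])$ convergence is unaffected. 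Stating that explicitly would close the only gap a careful reader might poke at.
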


We need the following Lemma.

\begin{lemma}
\label{itocheb}
Let $\f$ be a step function as above. Let $\delta, \epsilon >0$. Then
$$
\bP\{|\int_a^b\f(t)\,dw(t)|> \eps\} \le \frac{\delta}{\epsilon^2} +
\bP\{\int_a^b|\f(t)|^2\,dt>\delta\}\,.
$$
\end{lemma}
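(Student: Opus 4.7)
The plan is the standard truncation-plus-Itô-isometry route. On the event $\{\int_a^b|\f|^2\,dt>\delta\}$ there is nothing to prove, so the work lies in bounding the probability that $|\int_a^b\f\,dw|>\epsilon$ on the complementary event. I would like to combine the Itô isometry with Chebyshev's inequality, but the isometry is only useful if the integrand's $L^2$-norm is controlled pointwise; hence I first truncate.

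Write $\f(t)=\f(t_i)$ on $[t_i,t_{i+1})$ with $\f(t_i)\in\cF_{t_i}$, and set $S_k:=\sum_{i=0}^{k-1}|\f(t_i)|^2(t_{i+1}-t_i)$, so $S_k$ is $\cF_{t_{k-1}}$-measurable. Define
\[
\tilde\f(t):=\f(t_i)\,\mathbf{1}_{\{S_{i+1}\le\delta\}},\qquad t\in[t_i,t_{i+1}).
\]
Since $S_{i+1}$ only involves $\f(t_0),\dots,\f(t_i)$, the indicator is $\cF_{t_i}$-measurable, and $\tilde\f$ remains an adapted step function in $M_2[a,b]$. Two observations are immediate: (i) pointwise in $\omega$, $\int_a^b|\tilde\f|^2\,dt=S_K\le\delta$, where $K$ is the largest $k$ with $S_k\le\delta$; and (ii) on the event $\{\int_a^b|\f|^2\,dt\le\delta\}$ every indicator equals $1$, so $\tilde\f\equiv\f$ there. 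Consequently
\[
\Bigl\{\Bigl|\int_a^b\f\,dw\Bigr|>\epsilon\Bigr\}\subset\Bigl\{\int_a^b|\f|^2\,dt>\delta\Bigr\}\cup\Bigl\{\Bigl|\int_a^b\tilde\f\,dw\Bigr|>\epsilon\Bigr\}.
\]

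For the second event I invoke the Itô isometry for step functions, which is elementary: after expanding the square and conditioning on $\cF_{t_j}$ in pairs with $i<j$, the cross terms vanish because $\bE[w_{t_{j+1}}-w_{t_j}\mid\cF_{t_j}]=0$, while the diagonal terms give $\tilde\f(t_i)^2(t_{i+1}-t_i)$ via $\bE[(w_{t_{i+1}}-w_{t_i})^2\mid\cF_{t_i}]=t_{i+1}-t_i$. Thus
\[
\bE\Bigl|\int_a^b\tilde\f\,dw\Bigr|^2=\bE\int_a^b|\tilde\f|^2\,dt\le\delta,
\]
and Chebyshev delivers $\bP\{|\int_a^b\tilde\f\,dw|>\epsilon\}\le\delta/\epsilon^2$. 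Combining with the set inclusion above yields the lemma.

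There is no serious obstacle here — the only point requiring care is that the cutoff must be previsible so that $\tilde\f$ stays adapted and the Itô isometry applies. This is the reason for cutting by $\mathbf{1}_{\{S_{i+1}\le\delta\}}$ rather than by some quantity depending on future Brownian increments: although $S_{i+1}$ carries the index $i+1$, for a step function it is already $\cF_{t_i}$-measurable, which is precisely what the argument needs.
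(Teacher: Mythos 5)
Your proof is correct, and it is the standard truncation argument for this lemma (it appears in essentially this form in Gikhman--Skorohod, which the paper cites for its stochastic-calculus background); the paper itself states the lemma without proof, so there is no in-text argument to compare against. The one point that needs care — that the cutoff indicator $\mathbf{1}_{\{S_{i+1}\le\delta\}}$ is $\cF_{t_i}$-measurable because $S_{i+1}$ depends only on $\f(t_0),\dots,\f(t_i)$, so that $\tilde\f$ stays adapted and the It\^o isometry applies — you identify explicitly and handle correctly; the set inclusion, the pointwise bound $\int_a^b|\tilde\f|^2\,dt\le\delta$, and the Chebyshev step all go through as stated.
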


This lemma immediately gives the following reasoning. If--as above--$f\in M_2[a,b]$ and $f_n$ are step functions from Lemma \ref{step}, then
$$
\bP-\lim_{n\rightarrow\infty}\int_a^b |f(t)-f_n(t)|^2\,dt=0\,.
$$
Then
$$
\bP-\lim_{n,m\rightarrow\infty}\int_a^b |f_m(t)-f_n(t)|^2\,dt=0\,.
$$
By definition
$$
\forall \eps>0\,,\,\bP\{\int_a^b |f_m(t)-f_n(t)|^2\,dt>\eps\} \rightarrow 0\,,m,n\rightarrow\infty\,.
$$
Now we use Lemma \ref{itocheb} to have
$$
\limsup_{m,n\rightarrow\infty} \bP\{|\int_a^b f_n(t)\,dw(t)-\int_a^b f_m(t)\,dw(t)|>\eps\}\le \frac{\delta}{\eps^2}
$$
for any $\delta>0$.  So the sequence of random variables $\xi_n:=\int_a^b f_n(t)\,dw(t)$ is Cauchy convergent in measure (in probability). So in probability it converges to a certain random variable $\xi$.
 This $\xi$ is {\bf by definition} $\int_a^b f dw(t)$. {\bf It\^o's stochastic integral is constructed.}
 
 \bigskip

This integral has many nice properties:

If $
\bE \int_a^b |f(t)|^2\, dt <\infty
$, then $\bE \int_a^b f(t)\, dw(t)=0$ and
$$
\bE (\int_a^b f(t) dw(t))^2 =\bE\int_a^b |f(t)|^2\,dt\,.
$$
If in addition $
\bE \int_a^b |g(t)|^2\, dt <\infty
$ then
\begin{equation}
\label{product}
\bE (\int_a^b f(t) dw(t)\cdot \int_a^b g(t) dw(t))= \bE \int_a^b f(t)\cdot g(t)\,dt\,.
\end{equation}
({\bf Integral of the product is the product of integrals.})

\subsection{Stochastic differential.}
Let $b(t)\in M_2[a,b]$, and $a(t)$ be measurable with respect to $\cF_t$ for every $t$, and
$$
\int_a^b |a(t)|\,dt <\infty\,.
$$
Suppose $\zeta(t)$ is a random process such that for all $t_1, t_2$ such that
$a\le t_1\le t_2\le b$
$$
\zeta (t_2)-\zeta(t_1) =\int_{t_1}^{t_2} a(t)\,dt+ \int_{t_1}^{t_2} b(t)\, dw(t)\,.
$$
Then we write the above line  as {\bf stochastic differential}:
$$
d\zeta(t)= a(t)dt + b(t)dw(t)\,.
$$
\noindent{\bf Remark.} If $a=0$ this integral is a martingale (obviously) on the filtration $\{\cF_t\}_{t>0}$ of sigma algebras  generated by Brownian motions.

\subsection{It\^o' formula.}

 Let $\zeta$ have the stochastic differential in the sense above and let $u(t,x)$ be a (several times) smooth function. Consider new process
 $$
 \eta(t):= u(t, \zeta(t))\,.
 $$
 
 \begin{theorem}
 \label{itoth}
 Then $\eta$ also has stochastic differential and
 $$
 d\eta(t) = [ u'_t(t, \zeta(t) + u'_x(t,\zeta(t)) a(t) + \frac12 u''_{xx}(t,\zeta(t))\cdot b^2(t)]\,dt + u'_x (t,\zeta(t))\cdot b(t)\cdot dw(t)\,.
 $$
 \end{theorem}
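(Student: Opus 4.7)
The plan is the classical partition--and--Taylor--expand proof, with the quadratic variation of Brownian motion supplying the ``extra'' $\frac12 u''_{xx} b^2 \,dt$ term. First I would reduce to a tractable case by localization: stopping $\zeta$ at $\tau_N := \inf\{t:|\zeta(t)|\ge N\}$ and assuming $a,b$ are bounded step processes (justified by Lemma \ref{step}), and replacing $u$ by a truncation that is $C^2$ with bounded $u'_t, u'_x, u''_{xx}$. The final statement for general $u, a, b$ follows by passing to the limit in probability and sending $N\to\infty$. On this reduced problem, fix a partition $a=t_0<t_1<\dots<t_n=t$ of mesh $\delta$, write the telescoping identity
$$ u(t,\zeta(t)) - u(a,\zeta(a)) = \sum_{i=0}^{n-1}\bigl[u(t_{i+1},\zeta(t_{i+1})) - u(t_i,\zeta(t_i))\bigr], $$
and second-order Taylor-expand each summand about $(t_i,\zeta(t_i))$ to obtain
$$ u'_t\,\Delta t_i \; + \; u'_x\,\Delta\zeta_i \; + \; \tfrac12 u''_{xx}(\Delta\zeta_i)^2 \; + \; R_i, $$
with derivatives evaluated at $(t_i,\zeta(t_i))$ and $R_i$ a remainder controlled by $|\Delta t_i|^2 + |\Delta t_i||\Delta\zeta_i| + |\Delta\zeta_i|^3$.

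Next I would identify the limit of each of the three ``main'' sums. The first telescopes into a Riemann sum for $\int_a^t u'_t(s,\zeta(s))\,ds$ by continuity of $u'_t(\cdot,\zeta(\cdot))$. For the second, use $\Delta\zeta_i = \int_{t_i}^{t_{i+1}}a\,ds + \int_{t_i}^{t_{i+1}}b\,dw$; the drift piece gives a Riemann sum converging to $\int_a^t u'_x a\,ds$, while the martingale piece, by construction of the stochastic integral from step functions (Lemma \ref{step}), converges in probability to $\int_a^t u'_x(s,\zeta(s))\, b(s)\,dw(s)$. The quadratic term is where the novelty lies: substituting $\Delta\zeta_i = a(t_i)\Delta t_i + b(t_i)\Delta w_i +$ (small), I would show $(\Delta\zeta_i)^2 = b(t_i)^2(\Delta w_i)^2 + E_i$ with $\sum E_i\to 0$ in probability (the $a^2(\Delta t_i)^2$ and $a b\,\Delta t_i\,\Delta w_i$ pieces are trivially negligible by Cauchy--Schwarz in $\bE$).

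The heart of the proof is then the quadratic-variation identity
$$ \sum_i f(t_i)\bigl(\Delta w_i\bigr)^2 \; \xrightarrow{\;L^2\;} \; \int_a^t f(s)\,ds $$
for adapted step-type $f$, which I would prove directly from independence and gaussianity of increments: since $\bE[(\Delta w_i)^2\mid \cF_{t_i}] = \Delta t_i$ and $\mathrm{Var}((\Delta w_i)^2) = 2(\Delta t_i)^2$, the $L^2$ norm squared of the difference equals $2\,\bE\sum_i f(t_i)^2(\Delta t_i)^2 \le 2\delta\,\bE\int_a^t f^2\,ds \to 0$. Applied with $f(t_i) = \tfrac12 u''_{xx}(t_i,\zeta(t_i))\,b(t_i)^2$ this produces the drift contribution $\int_a^t \tfrac12 u''_{xx} b^2\,ds$.

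Finally I would dispatch the remainder $\sum_i R_i$. The cubic piece is bounded by $\bigl(\sup_i|\Delta\zeta_i|\bigr)\cdot\sum_i(\Delta\zeta_i)^2$; the supremum tends to zero in probability by path-continuity of $\zeta$, while $\sum_i(\Delta\zeta_i)^2$ stays bounded in probability by the quadratic-variation computation above. The mixed pieces are controlled by Cauchy--Schwarz. Assembling the three surviving limits yields exactly the asserted stochastic differential for $\eta(t)=u(t,\zeta(t))$. The main obstacle is not any single estimate but keeping the localization/step-approximation bookkeeping honest: one must know that each intermediate sum converges in a mode (probability or $L^2$) strong enough to survive the passage through the reduction, which is why Lemma \ref{itocheb} together with the $L^2$ quadratic-variation bound is so convenient here.
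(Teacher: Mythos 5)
The paper does not prove Theorem \ref{itoth}; it records only that ``the proof is quite subtle'' and refers the reader to the textbooks \cite{GS} and \cite{We}. Your sketch is precisely the classical partition-and-Taylor argument one finds in those references, and it is correct in outline: localize by stopping and step-approximating $a,b$ (Lemma \ref{step}), telescope $u(t,\zeta(t))-u(a,\zeta(a))$ over a mesh, Taylor-expand to second order, and let the $L^2$ quadratic-variation identity $\sum f(t_i)(\Delta w_i)^2\to\int f\,ds$ (proved via $\bE[(\Delta w_i)^4]=3(\Delta t_i)^2$ and orthogonality of increments) produce the extra $\tfrac12 u''_{xx}b^2\,dt$ drift. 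The estimate you give for the residual cubic term, $\bigl(\sup_i|\Delta\zeta_i|\bigr)\sum_i(\Delta\zeta_i)^2$ with the first factor vanishing by path continuity and the second tight by the same quadratic-variation bound, is the standard way to close the argument, and invoking Lemma \ref{itocheb} to propagate convergence in probability through the step-approximation is exactly the mechanism the paper's construction of the stochastic integral is designed to supply. Since the paper itself offers no proof to compare against, there is nothing further to reconcile; your proposal is a sound blueprint of the argument the cited sources carry out in full.
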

 
 \begin{proof}
 The proof is quite subtle. See \cite{GS}, \cite{We}.
 
 \end{proof}

 Matrix It\^o's formula also exists and will be used. Let $a$ be $m\times 1$ column of processes, $\sigma$ is a $m\times k$ matrix of processes (with entries in $M_2[a,b]$). Let $W(t)$ be a column of $k$ independent Brownian motion.
 Let $\zeta(t)$ be a $m\times 1$ process with stochastic differential
 $$
 d\zeta(t) =a(t)\,dt+ \sigma\,dW(t)\,.
 $$
 Let $u(t, x)$ be a smooth function, where $x\in \R^m$.
 Let $\eta(t)= u(t, \zeta(t))$. Then $\eta$ also has stochastic differential, and matrix It\^o's formula gives
 \begin{equation}
 \label{itom}
 d\eta(t) = [\pd u/\pd t +  \nabla_x u (t, \zeta)\cdot a(t) +\frac12 \text{trace} (\sigma\,H_u(t,\zeta)\sigma^*)]\,dt +
 \nabla_x u\cdot\sigma  dW(t)\,. 
 \end{equation}
 Here $\cdot$ is the scalar product in $\R^m$.

 \subsection{Space-time Brownian motion.}
 
 Let us discuss Theorem \ref{itoth}. If $a=0$, then the process $\zeta$ is a martingale (see Remark before the theorem). However, it is quite unrealistic to expect that if we consider the composition of a non-linear function $u$ and a martingale, then we would get another martingale. And in fact, if $a=0$ the formula in Theorem \ref{itoth} becomes (if $a=0$)
 \begin{equation}
 \label{ar0}
 d\eta(t) = [ u'_t(t, \zeta(t)  + \frac12 u''_{xx}(t,\zeta(t))\cdot b^2(t)]\,dt + u'_x (t,\zeta(t))\cdot b(t)\cdot dw(t)\,,
 \end{equation}
 and the ``non-martingale" part (called {\bf drift}) in square brackets is very much present. But there is one very important exception.
 
 Suppose $f\in C_0^{\infty}$ and $u^f(t,x)$ is the heat extension of $f$, in other words, the solution of  the heat equation:
 \begin{equation}
 \label{heat}
 \left(\frac{\pd}{\pd t} -\frac12\frac{\pd^2}{\pd x^2}\right)u^f =0\,,\, u^f(0,x)=f(x)\,.
 \end{equation}
 
 Fix large positive $T$ and consider function of $(t, x)$ given by $u=u^f(T-t, x)$. We want to compose it with stochastic process as in Theorem \ref{itoth}, with $a=0, b=1$. Then we get the process
 $$
 \eta:=u^f(T-t, w_t)\,.
 $$
 It will be a martingale on $[0,T]$. In fact, we can use \eqref{ar0} to get
 $$
 d\eta(t) = [-\frac{\pd u^f}{\pd t} (T-t, w_t) + \frac12 \frac{\pd^2 u^f}{\pd x^2}(T-t, w_t)] \,dt+ \frac{\pd u^f}{\pd x}(T-t, w_t) dw_t\,,
 $$
 and by \eqref{heat} the drift term in the brackets disappears.
 
 If we work with heat extension for functions on $\R^k$ the same will be true. Now Brownian motion $W_t$ is $k$-dimensional (just $k$ independent Brownian motions) and $u^f$ is the solution of heat equation

\begin{equation}
 \label{heatk}
 \left(\frac{\pd}{\pd t} -\frac12\Delta\right)u^f =0\,,\, u^f(0,x)=f(x)\,.
 \end{equation}

Then we get the following stochastic differential
\begin{equation}
\label{heatmart}
d \,u^f(T-t, W_t) = \nabla_x u^f(T-t, W_t)\cdot dW_t\,,
\end{equation}
where $\cdot$ is the scalar product in $\R^k$.

We are interested now in the case of complex valued function $f$ on $\R^2$, so $k=2$. Thinking that gradient is always column vector and $W_t$ is $2$-dimensional row vector it is convenient to rewrite
\eqref{heatmart} as
\begin{equation}
\label{heatmart1}
f(W_T)-u^f(T, 0) = \int_0^T dW_t\cdot \begin{bmatrix}\pd_x\\\pd_y\end{bmatrix} u^f(T-t, W_t)\,.
\end{equation}

\noindent{\bf Definition.} The expressions $\int_0^T dW_t\cdot \begin{bmatrix}\pd_x\\\pd_y\end{bmatrix} u^f(T-t, W_t)$ will be called {\bf heat martingales}.

But we will need a bigger class, where heat martingales are supplemented by their {\bf martingale transforms}. The simplest martingale transforms are given by expressions
$$
\int_0^T dW_t\cdot A\begin{bmatrix}\pd_x\\\pd_y\end{bmatrix} u^f(T-t, W_t)\,,
$$
where $A$ is a fixed matrix not depending neither on $\omega$ (elementary event) nor on time $t$.

Consider a special matrix
\begin{equation}
\label{barA}
A:= \begin{bmatrix} 1, & i\\i,& -1\end{bmatrix}
\end{equation}
Then we get
$$
\int_0^T dW_t\cdot \begin{bmatrix}\pd_x+i\pd_y\\i(\pd_x+i\pd_y)\end{bmatrix} u^f(T-t, W_t)\,,
$$
which is
\begin{equation}
\label{dbar}
2\int_0^T dW_t\cdot \begin{bmatrix}\bar\pd\\i\bar\pd\end{bmatrix} u^f(T-t, W_t)\,.
\end{equation}
This is quite suggestive. In fact, denoting temporarily the Ahlfors--Beurling transform
$R_1^2-R_2^2+2iR_1R_2$ by symbol $AB$, we recall that $AB\bar\pd=\pd$. The following theorem holds.
\begin{theorem}
\label{stocAB}
 $$1)\,\,\lim_{T\rightarrow\infty}\bE (
\int_0^T dW_t\cdot \begin{bmatrix}\pd_x\\\pd_y\end{bmatrix} u^f(T-t, W_t)|W_T=z) =f(z)\,,
$$

 $$
 2)\,\,\lim_{T\rightarrow\infty}\bE (\int_0^T dW_t\cdot \begin{bmatrix}\pd_x+i\pd_y\\i(\pd_x+i\pd_y)\end{bmatrix} u^f(T-t, W_t)|W_T=z) =AB(f)(z)\,.
$$

\end{theorem}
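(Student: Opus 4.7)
Part 1) is the direct content of the Itô identity \eqref{heatmart1}: the integral on the left-hand side equals $f(W_T) - u^f(T,0)$, so conditioning on $W_T = z$ yields $f(z) - u^f(T,0)$. For $f \in C_0^\infty$, the explicit formula $u^f(T,0) = \frac{1}{\pi T}\int f(y) e^{-|y|^2/T}\,dy = O(1/T)$ shows that the correction $u^f(T,0)$ vanishes as $T \to \infty$, proving part 1).

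For part 2), set $N_T := \int_0^T dW_t\cdot A\nabla u^f(T-t, W_t)$ with $A = \begin{bmatrix}1 & i \\ i & -1\end{bmatrix}$, and let $\phi_T(z) := \bE(N_T\mid W_T = z)$. The plan is to recover $\phi_T$ by duality: for any test function $g \in C_0^\infty$, decompose $g(W_T) = u^g(T,0) + M^g_T$ with $M^g_T := \int_0^T dW_t \cdot \nabla u^g(T-t, W_t)$, and use that $N_T$ has mean zero together with the complex, vector-valued Itô isometry \eqref{product} to obtain
\begin{equation*}
\bE\bigl(N_T\,\overline{g(W_T)}\bigr) = \bE\!\int_0^T \bigl(A\nabla u^f(T-t,W_t)\bigr)\cdot\overline{\nabla u^g(T-t,W_t)}\,dt.
\end{equation*}
Inserting the density $p_s(x)=\frac{1}{\pi s}e^{-|x|^2/s}$ of $W_s$, performing the change of variables $\tau = T-t$, and using that $\pi T \cdot p_{T-\tau}(x) \to 1$ on compact sets together with the rapid decay of $|\nabla u^f||\nabla u^g|$ in both $\tau$ and $x$, one obtains
\begin{equation*}
\lim_{T\to\infty}\pi T\cdot \bE\bigl(N_T\,\overline{g(W_T)}\bigr) = \int_0^\infty\!\!\iint_{\R^2}\bigl(A\nabla u^f(\tau,x)\bigr)\cdot\overline{\nabla u^g(\tau,x)}\,dx\,d\tau.
\end{equation*}

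Since $A\nabla = (\partial_x + i\partial_y,\, i\partial_x - \partial_y)^t = (2\bar\partial,\, 2i\bar\partial)^t$, the scalar product in the integrand collapses to $4\,\bar\partial u^f \cdot \overline{\partial u^g}$, and entrywise application of Lemma \ref{l1.1} and its $R_1R_2$ variant (obtained by the same heat-extension integration by parts) identifies the right-hand side with a constant multiple of $\iint AB(f)(z)\,\overline{g(z)}\,dz$; the constant is forced to $1$ once one accounts for the factor $2$ carried by $\bar\partial$ inside $A\nabla$ and the normalization $\partial_t u = \tfrac{1}{4}\Delta u$ built into the heat kernel $\frac{1}{\pi t}e^{-|x|^2/t}$. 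On the other hand, $\bE(N_T\,\overline{g(W_T)}) = \iint\phi_T(z)\,\overline{g(z)}\,p_T(z)\,dz$, and $\pi T\cdot p_T(z)\to 1$ uniformly on compacts, so the left-hand side equals $\iint(\lim_T \phi_T)(z)\,\overline{g(z)}\,dz$ in the limit. Arbitrariness of $g \in C_0^\infty$ then forces $\lim_{T\to\infty}\phi_T(z) = AB(f)(z)$.

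The main obstacle is justifying the interchange of $\lim_{T\to\infty}$ with the triple integral on both sides: one must produce a dominated-convergence majorant for $p_{T-\tau}(x)|\nabla u^f(\tau,x)||\nabla u^g(\tau,x)|$ uniform in $T$. This is routine for $f,g \in C_0^\infty$ because gradients of heat extensions of compactly supported functions decay like $\tau^{-3/2}$ at infinity in $\tau$ and Gaussianly in $|x|/\sqrt{\tau}$, while $p_{T-\tau}(x)\leq \frac{1}{\pi(T-\tau)}$; one splits $[0,T]$ into $[0,T/2]$ and $[T/2,T]$ and handles each by elementary bounds. A conceptually cleaner route is to represent $\phi_T$ directly as an integral over the Brownian bridge from $0$ to $z$ of length $T$ and let $T\to\infty$; the bridge converges, after re-centering, to a two-sided Brownian motion, and the identity becomes the classical Gundy--Varopoulos representation of the Ahlfors--Beurling transform as a martingale transform of a heat martingale.
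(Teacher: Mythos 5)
Your argument is correct and follows essentially the same route as the paper: pair $N_T$ against the heat martingale ending at $g(W_T)$, use the It\^o product rule to turn $\bE(N_T\,g(W_T))$ into a $(t,x)$-integral of $\bar\partial u^f\cdot\bar\partial u^g$ against the Gaussian density, normalize via $2\pi T\,p_{T-t}\to 1$, and identify the limit with the pairing $(AB f,g)$ through Lemma~\ref{l1.1}. The conjugation on $g$ and the closing remark on the Brownian-bridge/Gundy--Varopoulos view are cosmetic variations; the core duality argument is the one in the paper.
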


\begin{proof}
Let us consider a test function $g$ and build a heat martingale $X(t), 0\le t\le  T$, by formula 1), but with $f$ replaced by $g$:
$$
X(t):=g(T,0)+\int_0^t dW_s\cdot \begin{bmatrix}\pd_x\\\pd_y\end{bmatrix} u^g(T-s, W_s)\,.
$$
 Let $Y(t), 0\le t\le T$, denote the martingale in formula 2):
 $$
 Y(t):=\int_0^t dW_s\cdot \begin{bmatrix}\pd_x+i\pd_y\\i(\pd_x+i\pd_y)\end{bmatrix} u^f(T-s, W_s)\,.
 $$
Then by ``rule" that the product of stochastic integrals is ``the integral of the product",  we get (below $k(t;x,y):=\frac1{2\pi\,t}e^{-\frac{x^2+y^2}{t}}$)
$$
2\pi\,T\bE ( Y(T)\cdot X(T)) =2\pi\,T\int_0^T\iint_{\R^2} \bar\pd u^f(T-t; x,y)\,\bar\pd u^{ g}(T-t; x,y)k(t;x,y)\,dxdy\,dt
$$
$$
=-2\pi\,T\int_0^T\iint_{\R^2} \bar\pd u^f(t; x,y)\,\bar\pd u^{ g}(t; x,y)k(T-t;x,y)\,dxdy\,dt\,.
$$
Notice that $2\pi\,T\,k(T-t;x,y)\rightarrow 1$ if $T$ goes to infinity. It is not then difficult to see that the last expression becomes very close to
$$
\int_0^T\iint_{\R^2} \bar\pd u^f(t; x,y)\,\bar\pd u^{ g}(t; x,y)\,dxdy\,dt\,,
$$
when $T$ goes to infinity.

Recall formula \eqref{1.13} and formula $AB=R_1^2-R_2^2+ 2iR_1R_2$.( Number $2$ in \eqref{1.13} should be dropped now as we are working with extensions with respect to $\frac{\pd}{\pd t} -\frac12\Delta$ unlike before formula \eqref{1.13}, where we worked with $\frac{\pd}{\pd t} -\Delta$.)  Combined they give us that the last expression would be equal to
$(AB(f), g)$ if the integration would be $\int_0^{\infty}...dt$ and not $\int_0^T...dt$. But as $T$ is large and $f,g$ are nice the ``error" goes to zero when $T$ goes to infinity.
So 
\begin{equation}
\label{ABf}
2\pi T\,\bE (Y(T)\cdot X(T)) = (AB(f), g)+o(1)\,.
\end{equation}
On the other hand, $X(T)= g(W_T)$ by \eqref{heatmart1} with $f$ replaced by $g$. Therefore, for any test function $g$
$$
2\pi T\,\bE (Y(T)\cdot g(W_T)) = 2\pi T\int_{\C} d\mu_T(z) \bE (Y(T)\,|\, W_T=z) g(z)\,,
$$
where $d\mu_T= \frac{1}{2\pi\,T} e^{-\frac{|z|^2}{T}}\,dm_2(z)$ is given by the density distribution of $W_T$. Now using the facts that $g$ is a nice test function and that
$2\pi T \frac{d\mu_T(z)}{d m_2(z)} \rightarrow 1$ pointwise and in a bounded fashion when $T\rightarrow\infty$ we obtain

$$
\int_{\C}  \bE (Y(T)\,|\, W_T=z) g(z)\, dm_2(z) = 2\pi T\,\bE (Y(T)\cdot g(W_T)) + o(1)\,.
$$
Comparing this with \eqref{ABf} we
 get the formula
\begin{equation}
\label{ABform}
AB(f)(z)=\lim_{T\rightarrow \infty} \bE (\int_0^T dW_t\cdot A\nabla_{x,y} u^f(T-t;W_t)| W_T=z)\,.
\end{equation}
Theorem is proved.
\end{proof}

\noindent {\bf Remark}. It is very easy to see now that for martingale $\{Y(t)\}_{0\le t \le T}$  constructed above
$$
 \|AB(f) \|^p_{L^p(\C, dm_2)} \le  \lim_{T\rightarrow \infty}2\pi\,T \,\bE |Y(T)|^p
$$
 for any $p$. It is a sort of averaging operator. Moreover, for martingale $\{X(t)\}_{0\le t \le T}$  we obviously have limiting equality
$$
 \|g\|^p_{L^p(\C, dm_2)} =  \lim_{T\rightarrow \infty}2\pi\,T \,\bE |X(T)|^p\,.
$$
This is trivial from \eqref{heatmart1}: just raise both part to the power $p$ and take the expectation (first conditioning over $W_T=z$, then integrating with respect to $d\mu_T(z)$) and use again the fact that 
$2\pi T \frac{d\mu_T(z)}{d m_2(z)} \rightarrow 1$. 

Now put $g=f$. We see that 
$$
\|AB(f)\|_p \le 2 M_p\|f\|_p
$$
 follows from $\bE |Y|^p\le M_p\bE|X|^p$ for martingales  $Y$, $X$. Notice that $Y$ is just the martingale transform of $X$ with the help of matrix $A$, whose norm is $2$. This explains the constant $2$ in the above display inequality. This is why we study below $X,Y$ and their relationship. 

\noindent{\bf Remark.} The reader can find many interesting  examples, references and explanations in recent review of Banuelos devoted to Burkholder's estimate: \cite{Ba}.

\subsection{Orthogonal (conformal) martingales.}

Introducing two martingales on the filtration of Brownian motion
$$
X(t):=\int_0^t dW_s\cdot \nabla_{x,y} u^f(T-s;W_s)\,, 0\le t\le T\,;
$$
$$
Y(t):=\int_0^t dW_s\cdot A\nabla_{x,y} u^f(T-s;W_s)=:A\star X(t)\,,0\le t\le T\,,
$$
and using the previous remark, we get that it might be a fruitful idea to look for a sharp {\bf martingale transform inequality}
\begin{equation}
\label{sharpli}
\bE\|A\star X (t)\|_p \le M_p\bE\|X\|_p\,.
\end{equation}

Let $f=\phi+i\psi$. Introduce the notations
$$
H_1^s=\begin{bmatrix} u^{\phi}_x(T-s;W_s), & u^{\psi}_x(T-s;W_s)\end{bmatrix}\,,
$$
$$
H_2^s=\begin{bmatrix} v^{\phi}_y(T-s;W_s), & v^{\psi}_y(T-s;W_s)\end{bmatrix}\,.
$$
$$
K_1^s=\begin{bmatrix} u^{\phi}_x(T-s;W_s)-u^{\psi}_y(T-s;W_s),& u^{\phi}_y(T-s;W_s)+u^{\psi}_x(T-s;W_s)\end{bmatrix}\,,
$$
$$
K_2^s=\begin{bmatrix} -u^{\phi}_y(T-s;W_s)-u^{\psi}_x(T-s;W_s),& u^{\phi}_x(T-s;W_s)-u^{\psi}_y(T-s;W_s)\end{bmatrix}\,,
$$
we can write complex martingale $X=X_1+iX_2$, $Y=A\star X=Y_1+iY_2$ in the form (below $dW_s$ is a $2$-row-vector)
\begin{equation}
\label{X}
\begin{bmatrix} X_1(t),& X_2(t)\end{bmatrix} =\int_0^t dW_s \begin{bmatrix}H_1^s\\ H_2^s\end{bmatrix}=\int_0^t (H_1^sdw_s^1+H_2^s dw_s^2)\,.
\end{equation}
\begin{equation}
\label{Y}
\begin{bmatrix} Y_1(t),& Y_2(t)\end{bmatrix} =\int_0^t dW_s \begin{bmatrix}K_1^s\\ K_2^s\end{bmatrix}=\int_0^t (K_1^sdw_s^1+K_2^s dw_s^2)\,.
\end{equation}

\noindent{\bf Properties of $H,K$.} Vector processes $H$ and $K$ are related by
\begin{equation}
\label{DS1}
\|K_1^s\|^2 + \|K_2^s\|^2 \le 4(\|H_1^s\|^2 + \|H_2^s\|^2)\,
\end{equation}
for all elementary events $\omega$ and all times $s$.

Relationship \eqref{DS1} is called {\bf differential subordination} of martingale $Y$ to martingale $2X$.

\begin{theorem} [Burkholder's theorem]
\label{2const}
If martingale $M$ is differentially subordinated to martingale $N$, then
$$
\bE\|M(t)\|_p \le (p^*-1)\bE\|N(t)\|_p\,, p^*:=\max(p, p/p-1)\,.
$$
The constant is sharp. 
\end{theorem}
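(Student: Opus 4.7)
The plan is to lift Burkholder's scalar Bellman argument into continuous time by applying It\^o's formula to the Hilbert-valued Bellman function $\f(x,y):=\Phi(\|x\|,\|y\|)$ constructed in the proof of Theorem~\ref{Buth}. For definiteness I treat the case $p\ge 2$, the complementary range $1<p<2$ being obtained by duality. Two ingredients are already in hand: the Hessian identity \eqref{d2fb}, from which one reads off the pointwise inequality
\[
-(H_\f\eta,\eta)\ge 0\quad\text{for every }\eta=(dx,dy)\text{ with }\|dx\|\ge\|dy\|,
\]
and the majorization \eqref{maj1}, $p(1-1/p^*)^{p-1}\f(x,y)\ge\|y\|^p-(p^*-1)^p\|x\|^p$.

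By a standard reduction I would first assume $N,M$ live on a Brownian filtration and, by martingale representation, write them as It\^o integrals $dN_s=H_s\,dW_s$, $dM_s=K_s\,dW_s$; the differential subordination hypothesis then means that column by column $\|K_s^{(j)}\|\le\|H_s^{(j)}\|$ almost surely in $(s,\omega)$. Applying matrix It\^o's formula \eqref{itom} to $\f(N_t,M_t)$ produces a local-martingale piece plus a drift $\tfrac12\sum_j(H_\f(N_s,M_s)\eta_s^j,\eta_s^j)\,ds$ with $\eta_s^j=(H_s^{(j)},K_s^{(j)})$; the pointwise Hessian bound forces this drift to be $\le 0$, and taking expectations gives
\[
\bE\f(N_t,M_t)\le\f(N_0,M_0)=\f(0,0)=0
\]
in the canonical case $N_0=M_0=0$. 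Combining with \eqref{maj1} yields $\bE\|M_t\|^p\le(p^*-1)^p\bE\|N_t\|^p$, which on extracting the $p$-th root is the desired inequality. Sharpness of the constant $p^*-1$ is inherited from the dyadic case of Theorem~\ref{Buth}: a dyadic martingale transform pair $(f,T_\eps f)$ is already a differentially subordinated pair (in fact with equality of differential norms), so the dyadic lower bound $\sup_\eps\|T_\eps\|_p=p^*-1$ precludes any smaller constant even in this restricted subclass.

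The main obstacle is the regularity gymnastics for $\f$. The function $\Phi$ is only $C^1$ (not $C^2$) across the ray $\|y\|=(p^*-1)\|x\|$, and the explicit formula \eqref{d2fb} contains a $1/\|y\|$ factor, so a naive application of It\^o's formula is not justified. The standard remedy is to mollify $\f$ by convolution with a smooth kernel of scale $\eps$, verify that the Hessian inequality descends to $\f_\eps$ (convolving a symmetric non-positive form with a non-negative bump preserves non-positivity on the admissible cone $\|dx\|\ge\|dy\|$, at the cost of an $o(1)$ error absorbed as $\eps\to 0$), stop the process at the first time $\min(\|N_s\|,\|M_s\|)\le\delta$, run the It\^o argument for $\f_\eps$, and finally pass to the limit $\delta,\eps\to 0$ using the polynomial growth bound $|\f|\le C(\|x\|^p+\|y\|^p)$ together with Doob's $L^p$ maximal inequality to justify uniform integrability. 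Extension from continuous to c\`adl\`ag martingales (needed for the general Wang-type statement) requires an additional approximation by piecewise-constant jumps but introduces no new analytic difficulty beyond checking that the Hessian inequality in discrete form is exactly the zigzag-concavity of $\f$ already established en route to Theorem~\ref{Buth}.
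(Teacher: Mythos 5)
Your argument is in spirit the same as the paper's treatment of the neighboring Theorem~\ref{BJconst}: apply It\^o's formula to $\varphi(N_t,M_t)$ where $\varphi(x,y):=\Phi(\|x\|,\|y\|)$ is the Hilbert-valued Burkholder function, read off the sign of the drift from the Hessian identity \eqref{d2fb}, and close with the majorization \eqref{maj1}. The paper itself does not actually supply a self-contained proof of Theorem~\ref{2const}; it refers to the literature and instead gives the detailed It\^o/Bellman calculation for the conformal refinement \ref{BJconst}. Your proposal is that same calculation with the conformality terms dropped, which is correct in outline.

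There is one genuine imprecision you should repair. You state that differential subordination means $\|K_s^{(j)}\|\le\|H_s^{(j)}\|$ column by column. That is stronger than the hypothesis; the standard notion, and the one the paper uses in \eqref{DS1}, is the summed (Hilbert--Schmidt) inequality $\sum_j\|K_s^{(j)}\|^2\le\sum_j\|H_s^{(j)}\|^2$. With only that, you cannot invoke a pointwise-in-$j$ sign for each Hessian form $(H_\varphi\eta_s^j,\eta_s^j)$ separately. The fix is exactly what the paper does in proving Theorem~\ref{BJconst}: substitute \eqref{d2fb}, sum over $j$, and observe that the first and third terms of \eqref{d2fb} are non-positive term by term for any direction once $p\ge 2$, while the middle term after summation becomes a negative multiple of $\sum_j\bigl(\|H_s^{(j)}\|^2-\|K_s^{(j)}\|^2\bigr)$, which is non-positive precisely under the HS hypothesis. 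With this correction the drift is non-positive and $\bE\,\varphi(N_t,M_t)\le\varphi(N_0,M_0)\le 0$ (the latter because $\|M_0\|\le\|N_0\|\le(p-1)\|N_0\|$), giving $\bE\|M_t\|^p\le(p^*-1)^p\bE\|N_t\|^p$ by \eqref{maj1}. The mollification and stopping-time discussion you sketch to handle the lack of $C^2$ regularity of $\varphi$ across $\|y\|=(p^*-1)\|x\|$ is the right remedy, as is obtaining sharpness from the dyadic $T_\varepsilon$ in Theorem~\ref{Buth}.
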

In particular,
\begin{equation}
\label{DS2}
\bE\|A\star X\|_p \le 2(p^*-1)\bE\|X(t)\|_p\,.
\end{equation}
But the constant is not sharp! We followed the probabilistic proof in \cite{BaM}, which ``randomize" the idea of \cite{PV}. There is an analytic proof following \cite{PV} more directly, see \cite{NV}.

\bigskip

\noindent{\bf More properties of $H,K$.} Vector processes $K$ have extra properties:
\begin{equation}
\label{conf1}
K_1^s\cdot K_2^s =0\,,\, \|K_1^s\|=\|K_2^s\|
\end{equation}
for all elementary events $\omega$ and all times $s$.
Such martingales are called {\bf orthogonal or conformal}.

\begin{theorem} [Banuelos--Janakiraman's theorem]
\label{BJconst}
We make the exposition of \cite{BaJ} using the notations above. If martingale $M$ is differentially subordinated to martingale $N$, and martingale $M$ is conformal and $\|M(0)\|\le \|N(0)\|$ then
$$
\bE\|M(t)\|_p \le \sqrt{\frac{p(p-1)}{2}}\bE\|N(t)\|_p\,, p>2\,.
$$ 
\end{theorem}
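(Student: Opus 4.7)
The plan is to follow the Bellman-function-plus-Ito template used throughout the paper, but with a Bellman function that exploits the rigidity of conformality. Concretely, I will construct a function $U:\R^{2}\times\R^{2}\to\R$ with three properties: the majorization $U(x,y)\ge\|y\|^{p}-c^{p}\|x\|^{p}$ with $c=\sqrt{p(p-1)/2}$; the sign condition $U(x,y)\le 0$ whenever $\|y\|\le\|x\|$, which combined with $\|M(0)\|\le\|N(0)\|$ gives $U(N_{0},M_{0})\le 0$; and a concavity inequality
\[
\mathrm{tr}\!\left(\begin{bmatrix}H^{T} & K^{T}\end{bmatrix} H_{U}(x,y)\begin{bmatrix}H\\K\end{bmatrix}\right)\le 0
\]
for every $2\times 2$ matrix pair $(H,K)$ with rows $(H_{1},H_{2})$ and $(K_{1},K_{2})$ subject to the differential subordination $\|K_{1}\|^{2}+\|K_{2}\|^{2}\le\|H_{1}\|^{2}+\|H_{2}\|^{2}$ together with the conformality relations $K_{1}\cdot K_{2}=0$ and $\|K_{1}\|=\|K_{2}\|$.

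Granted such a $U$, the theorem is immediate. Writing $dN_{s}=H_{s}\,dW_{s}$, $dM_{s}=K_{s}\,dW_{s}$ with $H_{s},K_{s}$ the predictable $2\times 2$ matrix processes coming from the heat-martingale construction of the preceding subsection, the matrix Ito formula gives
\[
U(N_{t},M_{t})-U(N_{0},M_{0})\;=\;(\text{stochastic integral})\;+\;\tfrac{1}{2}\int_{0}^{t}\mathrm{tr}\!\Bigl(\sigma_{s}^{T}H_{U}(N_{s},M_{s})\,\sigma_{s}\Bigr)\,ds,
\]
where $\sigma_{s}$ is the row-block matrix with $H_{s}$ on top and $K_{s}$ on the bottom. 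Taking expectations kills the stochastic integral, and the drift is pointwise non-positive by the concavity inequality, so $\bE\,U(N_{t},M_{t})\le U(N_{0},M_{0})\le 0$; together with the majorization, this yields $\bE\|M_{t}\|^{p}\le c^{p}\,\bE\|N_{t}\|^{p}$ and hence the theorem after taking $p$-th roots.

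A natural candidate for $U$, and the one used by Banuelos--Janakiraman, is
\[
U(x,y)\;=\;\bigl(\|y\|^{2}-c^{2}\|x\|^{2}\bigr)\bigl(\|x\|^{2}+\|y\|^{2}\bigr)^{p/2-1},\qquad c^{2}=\tfrac{p(p-1)}{2}.
\]
Since $c\ge 1$ for $p\ge 2$, the sign condition is automatic from the first factor. The majorization reduces by homogeneity to a one-variable inequality in $s=\|y\|/\|x\|$, which one verifies by elementary calculus (endpoints $s=0,\infty$, the zero $s=c$ of $s^{2}-c^{2}$, and comparison of first derivatives on the two subintervals).

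The heart of the proof, and the main obstacle, is verifying the concavity inequality. The key structural observation is that the conformality relations force the $(K,K)$-contribution to the trace to collapse to $\|K_{1}\|^{2}\,\Delta_{y}U$, because the covariation matrix of $M$ is $\|K_{1}\|^{2}$ times the $2\times 2$ identity. The cross term $2\sum_{i,j}U_{x_{i}y_{j}}\,H_{i}\cdot K_{j}$ is controlled by Cauchy--Schwarz using the rank-one off-diagonal block structure of $H_{U}$ (which is essentially $\partial_{\|x\|}\partial_{\|y\|}U$ acting along the radial directions). Substituting the explicit Hessian of the candidate $U$ and invoking the DS bound $\sum\|K_{i}\|^{2}\le\sum\|H_{i}\|^{2}$, the trace reduces to a quadratic form in the scalars $\|H\|$ and $\|K_{1}\|$ whose non-positivity is equivalent, after matching leading coefficients, to the identity $c^{2}=p(p-1)/2$. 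It is precisely the two conformality \emph{equalities} --- killing two degrees of freedom in $K$ that would otherwise be free --- that allow the sharp constant to drop from the Burkholder $p-1$ of Theorem~\ref{2const} to $\sqrt{p(p-1)/2}$.
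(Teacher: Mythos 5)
Your high-level scaffolding --- a Bellman majorant of $\|y\|^p-c^p\|x\|^p$ that is non-positive on $\{\|y\|\le\|x\|\}$ and has a non-positive It\^o drift under differential subordination plus conformality --- is the right template, but the concrete function you chose fails your own first condition. Divide the claimed majorization by $\|x\|^p$ and set $s=\|y\|/\|x\|$: you need $(s^2-c^2)(1+s^2)^{p/2-1}\ge s^p-c^p$ for all $s\ge 0$, with $c^2=p(p-1)/2$. Both sides grow like $s^p$; expanding the left side gives $s^p+(p/2-1-c^2)s^{p-2}+O(s^{p-4})$, so the difference behaves like $(p/2-1-c^2)s^{p-2}\to-\infty$ as $s\to\infty$, because $c^2-(p/2-1)=\bigl((p-1)^2+1\bigr)/2>0$ for every $p$. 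A concrete failure at $p=4$: $c^2=6$ and $(s^2-6)(1+s^2)-(s^4-36)=-5s^2+30<0$ already for $s>\sqrt6$. So $U\not\ge\|y\|^p-c^p\|x\|^p$, and the step from $\bE\,U(N_t,M_t)\le 0$ to $\bE\|M_t\|^p\le c^p\bE\|N_t\|^p$ collapses; the rest of the argument cannot be assessed because the candidate would have to be replaced.

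The paper's proof, following Ba\~nuelos--Janakiraman, does not use this NV-type function. It reuses Burkholder's function $\f(M,N)=p(1-1/p)^{p-1}(\|M\|+\|N\|)^{p-1}(\|M\|-(p-1)\|N\|)$, whose majorization of $\|M\|^p-(p-1)^p\|N\|^p$ was already established in \eqref{maj1}, together with a rescaling: one first shows $\bE\|M_t\|_p\le(p-1)\bE\|N_t\|_p$ under the stronger subordination $\|K\|^2\le\frac{2(p-1)}{p}\|H\|^2$, and then replaces $N$ by a constant multiple to land on $\sqrt{p(p-1)/2}$. The It\^o drift is split into $A+B+C+D$ via the explicit Hessian formula \eqref{d2fb}; the conformality identities $k_{11}k_{21}+k_{12}k_{22}=0$, $\|K_1\|=\|K_2\|$, and the derived $k_{11}k_{12}+k_{21}k_{22}=0$ force $B\le-\frac{p(p-2)}{2}\|K\|^2(\|M\|+\|N\|)^{p-2}$, so that $A+B\le0$ exactly under the rescaled subordination; $C\le0$ trivially and $D$ integrates to zero. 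Your instinct about where conformality must enter --- it kills two degrees of freedom in $K$ --- is correct, but it acts through the transverse part of the Burkholder-function Hessian, not through a collapse in the NV-type candidate, whose required majorization simply fails at the constant $c=\sqrt{p(p-1)/2}$.
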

In particular,
\begin{equation}
\label{DS2}
\bE\|A\star X\|_p \le \sqrt{2p(p-1)}\bE\|X(t)\|_p\,,\, p>2\,.
\end{equation}
But the constant is not sharp!

However, this inequality gives
$$
\|T\|_p \le \sqrt{2p(p-1)}, \,p>2\,.
$$ 
Interpolation between $p=2$ and large $p$ with this estimate, optimization in this large $p$, will give \eqref{BI3}.

\begin{proof}[The proof of Theorem \ref{BJconst}]
Our main tool will be formula \eqref{d2fb}.
Trivial renormalization shows that to prove Theorem \ref{BJconst} it is enough to prove that if $M, N$ are two martingales on the filtration of $2$-dimensional Brownian motion and $M$ is differentially subordinated to
$\sqrt{\frac{2(p-1)}{p}}\cdot N$, $p>2$, and $M$ is {\bf conformal} then
\begin{equation}
\label{MN}
\bE\|M(t)\|_p \le (p-1)\bE\|N(t)\|_p\,, p>2\,.
\end{equation}

Consider such $M,N$, and their $H_1, H_2, K_1, K_2$. We know that
\begin{equation}
\label{KH}
\|K\|^2\le \frac{2(p-1)}{p}\|H\|^2\,,
\end{equation}
where $\|K\|^2:=\|K_1\|^2 + \|K_2\|^2,\|H\|^2:=\|H_1\|^2 + \|H_2\|^2 $, and
$$
k_{11}\cdot k_{21} + k_{12}\cdot k_{22}=0\,.
$$
This and equality $\|K_1\|=\|K_2\|$ easily implies
\begin{equation}
\label{conf}
k_{11}\cdot k_{12} + k_{21}\cdot k_{22}=0\,.
\end{equation}

Let $V(M,N):= \|M\|^p-(p-1)\|N\|^p$, $p>2$, $\f(M, N) := p(1-1/p)^{p-1} (\|M\|+\|N\|)^{p-1}(\|M\|-(p-1)\|N\|)$.

We would like to prove that $\bE (V(M(t), N(t))\le 0$.
But it has been proved that
$V\le \f$. So it is enough to prove
\begin{equation}
\label{fMN}
\bE (\f(M(t), N(t)))\le 0\,.
\end{equation}

To prove \eqref{fMN} we use:
\begin{equation}
\label{int}
\f(M(t), N(t)) = \f(M(0), N(0)) + \int_0^t d\f (M(s), N(s))\,.
\end{equation}
To compute $\bE d\f$ we use  It\^o's formula, which of course involves Hessian $H_{\f}$. More precisely, $d\f(s)$ will involve
$$
(H_{\f}\begin{bmatrix} H_1^s\\K_1^s\end{bmatrix},\begin{bmatrix} H_1^s\\K_1^s\end{bmatrix}) +
$$
$$
(H_{\f}\begin{bmatrix} H_2^s\\K_2^s\end{bmatrix},\begin{bmatrix} H_2^s\\K_2^s\end{bmatrix})
$$ 
Now we look at formula \eqref{d2fb}, which gives
$$
d\f = p(1-1/p)^{p-1}(A+B+C+D)\,,\, A:=-p(p-1)(\|M(s)\|+ \|N(s)\|)^{p-2} (\|H\|_2^2 -\|K\|^2)\,,
$$
$$
B:= -p(p-2)(\|M(s)\|+ \|N(s)\|)^{p-1}\|M(s)\|^{-1}\left[\left(\frac{M_2 k_{11} -M_1 k_{12}}{\|M\|}\right)^2 + \left(\frac{M_2 k_{21} -M_1 k_{22}}{\|M\|}\right)^2\right]\,,
$$
where we need to recall that $M=M_1+iM_2$, $M_1, M_2$ being its real and imaginary parts. Part $C$ comes from the last part of formula \eqref{d2fb}, and, obviously, 
$$
C\le 0\,,
$$
$$
D= ...dw^1_s + ...dw^2_s\,,
$$
where $...$ involve functions of $k_{ij}^s, h_{ij}^s$ and $\nabla\f (M(s), N(s))$. This shows that $\int_0^t D(s)$ is a martingale starting at $0$ and so
\begin{equation}
\label{D}
\bE \int_0^t  D(s) =0\,.
\end{equation}

We open the brackets in $B$, use \eqref{conf}, and the fact that
$k_{11}^2 + k_{21}^2 = k_{12}^2 + k_{22}^2$, to get
$$
B\le -p(p-2) (\frac12 \|K\|_2^2) (\|M\|+\|N\|)^{p-2}\,.
$$
Now 
$$
A+B =-p(\|M\|+\|N\|)^{p-2} [(p-1) \|H\|_2^2  -\frac{p}2 \|K\|_2^2]\le 0\,,
$$
if \eqref{KH} is valid. Term $C$ is non-positive. Term $D$ disappears after integration $\bE\int_0^t$, As a result we come to (see \eqref{int}):
$$
\bE\int_0^t d\f(M(s), N(s)) =\bE \f(M(0), N(0))=\f(M(0), N(0))\le 0\,,
$$
because if $x:=\|M(0)\|\le   y:=\|N(0)\|$ then $\f(x,y)\le 0$, which is obvious from the formula for $\f$.

\end{proof}

As we already mentioned, this proves \eqref{BI3}. To prove \eqref{BI4} one needs even more careful stochastic analysis, and we leave this for the next
round of lectures somewhere in the future.

\section{Bellman function of Stochastic Optimal Control problems}
\label{SOC}

Let $W_s$ be $d_1$ dimensional Brownian motion. Let $x(t)$ is a $d$-dimensional process given by
\begin{equation}
\label{sdint}
x(t) = x + \int_0^tb(\al(s), x(s))\,ds+ \int_{0}^t \sigma(\al(s), x(s))\,dW_s \,,
\end{equation}
in other words the process starts at $x\in \R^d$ and satisfies a stochastic differential equation
$$
dx(t)= b(\al(t), x(t))\,dt + \sigma(\al(t), x(t))\,dW_t\,,
$$
where $\al$ is a $d_2$-dimensional control process, we can choose it ourselves, but it must be adapted, that is $\al(s)$ has to be measurable with respect to sigma algebra $\cF_s$ generated by $W_{t}, 0\le t\le s$. Also values of the process $\al$ are often restricted: $\al(s, \omega) \in A\subset \R^{d_2}$.

Matrix function $\sigma$ is smooth and $d\times d_1$-dimensional, and $b$ is a smooth column function of size $d$. Everything happens in $\Omega\subset \R^d$ (often =$\R^d$). 

The choice of adapted process $\al(s)$ gives us different motions, all started at the same initial $x\in \R^d$.

This is a ``broom" of motions, hidden elementary even $\omega$ gives ``one stem of a broom".

\bigskip

Suppose we are given the profit function $f(\al, x)$, meaning that on a trajectory of $x(t)$, for the time interval $[t, t+\Delta t]$, the profit is $f(\al(t), x(t)) +o(\Delta t)$. So on the whole trajectory we earn
$$
\int_0^{\infty} f(\al(t), x(t))\,dt\,.
$$
We are also given the pension--we call it bonus function $F$--how much one is given at the end of the life. We want to choose a control process $\al=\al(s)$ to {\bf maximize the average profit}:

\begin{equation}
\label{avprofit}
v^{\al}(x):= \bE \int_0^{\infty} f(\al(t), x(t))\, dt + \limsup_{t\rightarrow\infty}\bE F(x(t))\,.
\end{equation}
If $b=0$ and $F$ is convex then one ca write $\lim$ instead of $\limsup$.

\medskip

{\bf The optimal average gain}, or
\begin{equation}
\label{v}
v:=\sup_{\al} v^{\al}(x)
\end{equation}
is called the Bellman function of stochastic optimal control problem \eqref{sdint}, \eqref{avprofit}.

\medskip

Usually the analysis consists of

a) writing Bellman PDE on $v$;

b) solving it;

c) using ``verification theorem", which says that under certain conditions on data $\sigma, b, F, f, \Omega, A$ the classical solution of Bellman PDE is exactly $v$ from \eqref{v}.

\bigskip

\subsection{Writing Bellman PDE.}

This consists of a) It\^o's formula, b) Bellman's principle of dynamic programming.

Using It\^o's formula \eqref{itom} we get
$$
dv(x(s)) = \sum_{k=1}^d \frac{\pd v}{\pd x_k} (x(s)) \sum_{j=1}^{d_1} \sigma_{kj} (\al(s), x(s)) \, dw_s^j+
$$
$$
\sum_{k=1}^d \frac{\pd v}{\pd x_k} (x(s)) b_k(\al(s), x(s)) \,ds+
$$
$$
\frac12 \sum_{i,j=1}^d \frac{\pd^2 v}{\pd x_i\pd x_j} (x(s)) a^{ij} (\al(s), x(s))\,,
$$
where 
$$
a^{ij} (\al, x):= \sum_{k=1}^{d_1}\sigma_{ik}(\al, x)\sigma_{kj}(\al, x)
$$
is $i,j$ matrix element of $d\times d$ matrix $\sigma\sigma^*$.

Introduce two linear differential operators with non-constant coefficients:
$$
\cL_1(\al, x):= \sum_{k=1}^d b_k(\al, x) \frac{\pd}{\pd x_k}\,,
$$
$$
\cL_2(\al, x):= \sum_{i,j=1}^d a_{ij}(\al, x) \frac{\pd^2}{\pd x_i\pd x_j}\,,
$$
and
$$
\cL(\al,x):= \cL_1(\al, x) + \cL_2(\al, x)\,.
$$
Let us hit our formula for $dv(x(t))$ above by the expectation $\bE$, then the first line becomes $0$, and we get
$$
\bE \left[ \frac{d}{dt} v(x(t))\right] = \bE [\cL(\a(t), x(t))v ](x(t))\,.
$$
Or
\begin{equation}
\label{itov}
\bE v(x(t)) = v(x) +\bE\int_0^t  [\cL_1(\a(s), x(s))+\cL_2(\a(s), x(s))]v (x(s))\,ds\,.
\end{equation}

\bigskip

Now we need the second ingredient to write the Bellman equation: {\bf the Bellman principle= dynamic programming principle}. It is in this next equality:
$$
v(x)=\sup_{\al}\bE [\int_0^{\infty} f(\al(t), x(t))\, dt +\limsup_{t\rightarrow \infty}\dots] 
$$
\begin{equation}
\label{BP}
=\sup_{\al}\bE [\int_0^{t} f(\al(t), x(t))\, dt + v(x(t))]\,,\,\, \forall t>0\,.
\end{equation}

A minute though shows that this reflects the stationarity of Brownian motion and the fact that to be perfect one has to be perfect every second.

\medskip

Now plug $\bE v(x(t))$ from \eqref{itov} into \eqref{BP}. We get

$$
0= \sup_{\al} \bE [ \int_0^{\infty} f(\al(t), x(t))+ \cL(\al(t), x(t))v (x(t))]\,dt\,,\,\, \forall t>0\,.
$$
Divide by $t$ and tend $t$ to zero. We ``obtain" {\bf Bellman equation}:

\begin{equation}
\label{SBE}
\sup_{\al\in A} [ (\cL(\al, x) v)(x) + f(\al, x)]=0\,.
\end{equation}

Positivity (usually present) of $f$ and convexity (usually present) of $F$ imply (if there is no drift, that is if $b(\al, x)=0$)  {\bf obstacle condition}:

\begin{equation}
\label{OC}
v(x)\ge F(x)\,,\,\, \forall x\in \Omega\,.
\end{equation}
Often it becomes {\bf boundary condition}:
\begin{equation}
\label{OC}
v(x)= F(x)\,,\,\, \forall x\in \pd\Omega\,.
\end{equation}
The definition of $v$ in domain (not in the whole $\R^d$) should be slightly changed. The integration of profit function now is not from zero to infinity, but from zero to the stopping time of the first hit of $\pd \Omega$ by the trajectory $x(t)$.

See details of obtaining \eqref{SBE} in the beautiful book of N. Krylov \cite{Kr}.

In  applications one is also interested in supersolutions of the Bellman equation \eqref{SBE}~:
\begin{equation}
\label{4}
\left\lbrace\begin{array}{ll}& \mathop{\sup}\limits_{\alpha \in A} [ \mathcal{L}(\alpha, x) V(x)+f({\alpha},x) ]\leq 0 , x\in \Omega \ ,\\& V(x) \geq F(x)\ ,\ x\in \Omega\ .\end{array}\right.
\end{equation}
{\bf Lemma~:} Let $V$ solves (4) and let $v$ be the Bellman function,then $V \geq v$ in $\Omega$.

\medskip

\noindent{\bf Proof~:} Equation \eqref{SBE} states that $-\mathcal{L}(\alpha, x)V(x)\geq f(\alpha, x)$. Using \eqref{itov}  for $V$ and then \eqref{4}, one gets
$$
V(x) =\mathbb{E} V(x(t)) - \mathbb{E} \int^{t}_{0} (\mathcal{L}(\alpha(s), x(s)) V) (x(s)) ds
$$
$$\geq \mathbb{E} F(x(t)) +\mathbb{E} \int^{t}_{0} f(\alpha(s), x(s)) ds.
$$
Writing $\mathop{\overline{\lim}}\limits_{t\to \infty}$ of both parts,we get $V(x) \geq v^{\alpha}(x)$. It rests to take the supremum over the control process $\alpha$. 

\subsection{Special matrices $\sigma$ bring us to Harmonic Analysis.}
Let us consider a very simple matrix $\sigma$ not depending on $x$:
\begin{equation}
\label{HAd1}
d_1=1\,,\,\, \sigma(\al, x) = \begin{bmatrix} \al_1\\ \vdots \\ \al_d\end{bmatrix}=: \al\,.
\end{equation}

If on the top of that $b=0$ then operator $\cL$ just involves Hessian matrix $H_v$ of function $v$:
$$
(\cL(\al)v) (x) =\frac12 \sum_{i,j=1}^d\frac{\pd^2}{\pd x_i\pd x_j} v(x)=\frac12( H_v(x)\al, \al) \,.
$$
We claim that this is the generic case of Harmonic Analysis problems in $\R^1$. Equation \eqref{SBE} becomes
\begin{equation}
\label{HASBE}
\left\lbrace\begin{array}{ll}& \mathop{\sup}\limits_{\alpha \in A} [ \frac12\langle H_v(x)\al, \al\rangle +f({\alpha},x) ]= 0 , x\in \Omega \ ,\\& v(x) \geq F(x)\ ,\ x\in \Omega\ .\end{array}\right.
\end{equation}

If $b\neq 0$ then we just add the first order differential operator (called {\bf drift}):
\begin{equation}
\label{HASBEdr}
\left\lbrace\begin{array}{ll}& \mathop{\sup}\limits_{\alpha \in A} [ \frac12\langle H_v(x)\al, \al\rangle+\sum_{k=1}^d b_k(\al, x) \frac{\pd}{\pd x_k}v(x) +f({\alpha},x) ]= 0 , x\in \Omega \ ,\\& v(x) \geq F(x)\ ,\ x\in \Omega\ .\end{array}\right.
\end{equation}

Harmonic analysis on $\R^2$ ``becomes" the analysis of the following Bellman equation (and this is exactly what we did in the sections above devoted to the analysis of the Ahlfors--Beurling operator):
\begin{equation}
\label{HAd1}
d_1=2\,,\,\, \sigma(\al, x) = \begin{bmatrix} \al_{11}&\al_{12}\\ \vdots  &\vdots\\ \al_{d1}&\al_{d2}\end{bmatrix}=: \al\,.
\end{equation}

\bigskip

{\bf Conformal restrictions}:
Matrix $\al$ can have restrictions $\al\in A$ of the type that the first row is orthogonal to the second row and that the norms of the rows are equal. The reader can notice that these are {\bf Cauchy--Riemann} conditions, and the corresponding solution of \eqref{sdint} will be a conformal martingale (again if $b=0$). Bellman equation becomes

\begin{equation}
\label{HASBEdrD2}
\left\lbrace\begin{array}{ll}& \mathop{\sup}\limits_{\alpha \in A} [ \frac12\text{trace}(\al^* H_v(x)\al)+\sum_{k=1}^d b_k(\al, x) \frac{\pd}{\pd x_k}v(x) +f({\alpha},x) ]= 0 , x\in \Omega \ ,\\& v(x) \geq F(x)\ ,\ x\in \Omega\ .\end{array}\right.
\end{equation}

\bigskip

\noindent{\bf Remarks.} 1) This is (exactly as \eqref{HASBEdr}) very non-linear (actually an example of so-called {\bf fully non-linear}) equation of the second order.

2) This equation is much more difficult to analyze than \eqref{HASBEdr}. On the other hand, we can easily notice that conformal restrictions  on $\al$ makes clear that Hessian of $v$ should be replaced by Laplacian of $v$ (or some kind of semi-Laplacian-semi-Hessian).

\section{Examples showing almost perfect analogy between Stochastic Optimal Control and Harmonic Analysis}
\label{analogy}

\subsection{$A_{\infty}$ weights and associated Carleson measures. Buckley's inequality.}
We call a nonnegative function on $\mathbb{R}$ an $A_{\infty}$weight (dyadic $A_{\infty}$ weight actually) if

\begin{equation}
\label{5}\langle w\rangle_{J} \leq C_{1} e^{\langle \log w\rangle_{J}} \ ,\,\forall J \in \mathcal{D}\ .
\end{equation}
Here $\mathcal{D}$ is a dyadic lattice on $\mathbb{R}$, $\langle\  \cdot\ \rangle_{J}$ is the averaging over $J$.We are going to illustrate our use of Bellman function technique by a collection of examples, the first of which is the result of Buckley that can be found (along with ``continuous analogs'') in the paper of Fefferman-Kenig-Pipher \cite{FKP}.

\medskip

\begin{theorem}
\label{Buckley} Let $w\in A_{\infty}$. Then
\begin{equation}
\label{6}\forall I \in \mathcal{D} \ , \frac{I}{|I|}\mathop{\sum}\limits_{\ell \subseteq I\,,\,  \ell \in \mathcal{D}} \left( \frac{\langle w\rangle_{\ell_{+}}- \langle w\rangle_{\ell_{-}}}{\langle w \rangle_{\ell}}\right)^{2} |\ell| \leq C_{2}\ ,   
\end{equation}
Where $C_{2}$ depends only on $C_{1}$ in \eqref{5}. Here $\ell_{\pm}$ are right and left sons of $\ell \in\mathcal{D}$.
\end{theorem}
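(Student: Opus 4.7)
The plan is to mimic the Bellman-function arguments used earlier for the Carleson estimate in Lemma \ref{caralL}: find a two-variable function $B$ on a suitable domain that (i) is bounded above by a constant depending only on $C_1$, and (ii) drops by at least a multiple of $\bigl(\frac{\Delta \langle w\rangle}{\langle w\rangle}\bigr)^2$ under each dyadic splitting. Once such a $B$ is in hand, summing the one-step inequality over the subtree of $I$ and using $B\geq 0$ gives \eqref{6} immediately, the same way the Carleson property was extracted from the $-d^2 b_Q$ estimate in the proof of \eqref{caralLmu}.

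The natural candidate, given the $A_\infty$ condition $\langle w\rangle_J\leq C_1 e^{\langle\log w\rangle_J}$, is
\[
B(x,y):=\log x-y,\qquad D:=\{(x,y):\,x>0,\;0\le\log x-y\le\log C_1\},
\]
with $x$ standing in for $\langle w\rangle_J$ and $y$ for $\langle\log w\rangle_J$. By Jensen's inequality one always has $\log x\geq y$, so the lower bound $B\geq 0$ is automatic on averages, and the $A_\infty$ assumption gives the upper bound $B\le \log C_1$. Both $(\langle w\rangle_{\ell_\pm},\langle\log w\rangle_{\ell_\pm})$ lie in $D$ because the $A_\infty$ hypothesis is imposed on \emph{all} dyadic intervals, and the splitting identities $\langle w\rangle_\ell=\tfrac12(\langle w\rangle_{\ell_+}+\langle w\rangle_{\ell_-})$, $\langle\log w\rangle_\ell=\tfrac12(\langle\log w\rangle_{\ell_+}+\langle\log w\rangle_{\ell_-})$ hold trivially.

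The key concavity step is a short direct computation: writing $x=(x_++x_-)/2$, $y=(y_++y_-)/2$,
\[
B(x,y)-\tfrac12\bigl(B(x_+,y_+)+B(x_-,y_-)\bigr)=\log\frac{x}{\sqrt{x_+x_-}}=-\tfrac12\log\!\left(1-\tfrac{(x_+-x_-)^2}{4x^2}\right),
\]
and the elementary inequality $-\log(1-s)\geq s$ for $s\in[0,1)$ yields
\[
B(x,y)-\tfrac12\bigl(B(x_+,y_+)+B(x_-,y_-)\bigr)\geq\frac{1}{8}\left(\frac{x_+-x_-}{x}\right)^{2}.
\]
Notice that the $y$-coordinate only enters to keep us in $D$, while the gain comes entirely from the strict concavity of $\log x$, in complete analogy with the Taylor-expansion argument built around $b_Q(x,y)=x^\alpha y^\alpha$ in the proof of Lemma \ref{caralL}.

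The final step is the standard telescoping over the subtree. Multiplying the one-step inequality by $|\ell|$ and iterating from $\ell=I$ downward produces, for each $n$,
\[
|I|\,B_I\geq \sum_{J\subseteq I,\,|J|=2^{-n}|I|}|J|\,B_J+\frac{1}{8}\sum_{\substack{J\subseteq I\\|J|>2^{-n}|I|}}\left(\frac{\langle w\rangle_{J_+}-\langle w\rangle_{J_-}}{\langle w\rangle_J}\right)^{2}|J|,
\]
so dropping the nonnegative remainder and using $B_I\le\log C_1$ yields \eqref{6} with $C_2=8\log C_1$. I expect no genuine obstacle here: the only point that requires even a moment's care is checking the domain is preserved under splitting, which is immediate from the universality of the $A_\infty$ hypothesis over all dyadic intervals.
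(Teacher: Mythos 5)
Your proof is correct. The paper itself only sketches the Bellman set-up for this theorem (it identifies the variables, domain $\Omega=\{1\le x_1e^{-x_2}\le C_1\}$, profit function $4\alpha_1^2/x_1^2$, and writes the Bellman PDE \eqref{7}--\eqref{9}, but exhibits no explicit Bellman function); you complete that outline with the natural choice $B(x,y)=\log x-y$, whose Hessian $-x^{-2}e_1e_1^T$ is exactly what the PDE demands after a fixed rescaling. The one-step drop computation
\[
B(x,y)-\tfrac12\bigl(B(x_+,y_+)+B(x_-,y_-)\bigr)=-\tfrac12\log\Bigl(1-\tfrac{(x_+-x_-)^2}{4x^2}\Bigr)\ge\tfrac18\Bigl(\tfrac{x_+-x_-}{x}\Bigr)^2
\]
is clean and correct (the $y$-entries cancel by linearity, $4x^2=(x_++x_-)^2$, and $x_+x_-/x^2=1-(x_+-x_-)^2/(4x^2)$), the domain constraints $0\le B\le \log C_1$ follow from Jensen and the $A_\infty$ hypothesis applied to every dyadic interval, and the telescoping gives $C_2=8\log C_1$. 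This is the same Bellman-function route the paper indicates; you have simply carried it through to an explicit function, in the same spirit as the earlier proof of Lemma \ref{caralL} with its $b_Q(x,y)=x^\alpha y^\alpha$.
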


\bigskip

\centerline{\bf Who moves~?}

\medskip

$$x_{1} , x_{2} =\langle w\rangle_{J} \ ,\ \langle \log w \rangle_{J}
$$
$$\alpha_{1} = \langle w\rangle_{\text{son of}J} - \langle w\rangle_{J}\Rightarrow | \alpha_{1}| =\frac{1}{2} | \langle w\rangle_{J_{-}} -\langle w \rangle_{J_{+}}|
\,.$$

Function of profit can be read off \eqref{6} if one notices that$\frac{1}{|I|} \mathop{\sum}\limits_{\ell \subseteq I\,,\, \ell \in  \mathcal{D}} \cdots$ is the average over the lines of life. Each line of life initiates at I and then proceeds to $I_{\varepsilon_{1}} (\varepsilon_{1} = + 1$ or$\varepsilon_{1} = - 1)$, then to $I_{\varepsilon_{1}\varepsilon_{2}}(\varepsilon_{2} = + 1$ or $\varepsilon_{2}= -1)$, etc.

\medskip

Thus $\frac{1}{|I|} \mathop{\sum}\limits_{\ell \subseteq I \,,\, \ell \in  \mathcal{D}} \cdots$ plays the role of $\mathbb{E}\int^{\infty}_{0}\cdots$. This allows us to choose the correct profit function
$$f({\alpha},x) = \frac{4\alpha^{2}_{1}}{x_{1}^{2}}\,.
$$

Bonus function $F\equiv 0$ here.
Bellman equation reads now
\begin{equation}
\label{7}\sup_{\alpha = (\alpha_{1},\alpha_{2})} \left[ \langle H_v\alpha,\alpha \rangle + \frac{8\alpha_{1}^{2}}{x_{1}^{2}}\right] = 0
\end{equation}
to be solved in
\begin{equation}
\label{8}
\Omega = \left\{ (x_{1},x_{2}): 1 \leq x_{1} e^{-x_{2}} \leq c_{1}\right\}
\end{equation}
with the obstacle condition
\begin{equation}\label{9}v(x) \geq 0 \quad \forall x\in \Omega\ .
\end{equation}

{\bf Compare with} \eqref{HASBE}!

\bigskip

\subsection{A two-weight inequality}
$$
\forall J\in \mathcal{D}\,\, \langle u\rangle_{J} \langle v\rangle_{J}\leq 1 \Rightarrow \forall I \in \mathcal{D}
$$
$$
\frac{1}{|I|} \sum_{\ell \subseteq I \,,\, 
\ell\in\mathcal{D}} | \langle u\rangle_{\ell +} - \langle u \rangle_{\ell} | | \langle v\rangle_{\ell +} - \langle v\rangle_{\ell -} |\ | |\ell |
$$

$$\leq C \langle u\rangle_{I}^{1/2} \langle v \rangle_{I}^{1/2}.$$

\bigskip

\centerline{\bf Who moves~?}

$$x_{1} ,x_{2} = \langle u\rangle_{J} \ ,\ \langle v\rangle_{J} .$$As in the previous problem $f^{\alpha}(x)$ is easy to find~:

$$
f(\al,x)) = 4 |\alpha_{1} |\ |\alpha_{2} | \,.
$$
Bonus function $F\equiv 0$ here again.
Bellman equation
$$
\sup_{\alpha = (\alpha_{1},\alpha_{2})\in \R^2} \left[ \langle  H_v \alpha  , \alpha \rangle + 8 |\alpha_{1} | \ | \alpha_{2}|\right]=0 _ , v\geq 0\ 
 \text{in}\  \Omega = \{ x = (x_{1}, x_{2}): 0 \leq x_{1} ,x_{2} ;x_{1}x_{2} \leq 1\}\,.
 $$

{\bf Compare with} \eqref{HASBE}!

\bigskip

\subsection{John-Nirenberg inequality~: Bellman equation with a drift  but with $f(\al,x)\equiv 0$.}

$$\forall J \in\mathcal{D} \langle |\varphi - \langle \varphi\rangle_{J} |^{2} \rangle_{J} \leq \delta \Rightarrow \forall I \in \mathcal{D}$$

$$\langle e^{\varphi}\rangle_{I} \leq C_{\delta} e^{\langle  \varphi\rangle_{I}}\ .$$

\bigskip

\centerline{\bf Who moves~?}
$$
x_{1} =\langle \varphi \rangle_{J} \ ,\ x_{2} = \langle |\varphi  -\langle\varphi \rangle_{J}|^{2} \rangle_{J} =
 $$
$$
= \frac{1}{|J|} \sum_{I\subseteq J\,,\, I\in \mathcal{D}} \left\{  \frac{\langle \varphi\rangle_{I_{+}} - \langle \varphi    \rangle_{I-}}{2} \right\}^{2} |I|\,.
$$

Notice that $(t=n)$:
$$
x_{2}^{t} -\mathbb{E} (x_{2}^{t+1} |x^{t})=x_{2} -\frac{x_{2}^{+}+x_{2}^{-}}{2} = \left( \frac{x_{1}^{+}    -x_{1}^{-}}{2}\right)^{2} = (\alpha_{1}^{t})^{2}\,.
 $$
 But
 $$
x_{1}^{t} -\mathbb{E} (x_{1}^{t+1} |x^{t})=x_{1} -\frac{x_{1}^{+}+x_{1}^{-}}{2} = 0\,.
 $$
 On the other hand,
 $$x^{t+1} =x^{t} + \int_{t}^{t+1} \sigma d w^{s} + \int_{t}^{t+1} b ds \,.
 $$
 Thus drift $b$ stands for $\mathbb{E} (x^{t+1}|x^{t})-x^{t}$ (in the case of discrete time). Therefore, $b(\alpha ,x) = \begin{pmatrix}0 \\  -\alpha_{1}^{2}\end{pmatrix}$ in our case.
 Notice that $f(\al,x) \equiv 0$ as there is no $\frac{1}{|I|}\mathop{\sum}\limits_{\ell \subseteq I}$... in the functional. Bellman equation in this case has a form
 $$
 \sup_{\alpha = (\alpha_{1} , \alpha_{2})} \left[ \frac{1}{2} \langle  d^{2} v \alpha , \alpha \rangle - \frac{\partial v}{\partial x_{2}}  \alpha_{1}^{2} \right] = 0\,.
 $$

{\bf Compare with} \eqref{HASBEdr}!

\bigskip

 In other words~:
 \begin{equation}
 \label{13}
 \begin{pmatrix}\frac{\partial^{2}v}{\partial x_{1}^{2}} - 2 \frac{\partial  v}{\partial x_{2}} & \frac{\partial^{2} v}{\partial x_{1} \partial  x_{2}}\\ &\\ \frac{\partial^{2}v}{\partial x_{1} \partial x_{2}} &  \frac{\partial^{2}v}{\partial x_{2}^{2}}\end{pmatrix} \leq 0\,,\, \det\begin{pmatrix}\frac{\partial^{2}v}{\partial x_{1}^{2}} - 2 \frac{\partial  v}{\partial x_{2}} & \frac{\partial^{2} v}{\partial x_{1} \partial  x_{2}}\\ &\\ \frac{\partial^{2}v}{\partial x_{1} \partial x_{2}} &  \frac{\partial^{2}v}{\partial x_{2}^{2}}\end{pmatrix} =0\,.
 \end{equation}
 in $\Omega_{\delta} = \{ x = (x_{1},x_{2}), x_{1}\in \mathbb{R}, 0\leq x_{2} \leq \delta\}$.
 The obstacle condition is
 \begin{equation}
 \label{14}
 v(x) \geq F(x) \equiv e^{x_{1}} \ \text{in}\  \Omega_{\delta}
 \end{equation}

 Denote $B^{d}_{\delta}$ the dyadic Bellman function of a corresponding problem. There are many solutions of the above equation in $\Omega_{\delta}$ which satisfy   the obstacle condition $\ge e^{x_1}$ in  $\Omega_{\delta}$ and even satisfying the boundary condition $= e^{x_1}$ on $x_2=0$.  These are
 $$
 \varphi_{\varepsilon ,q} (x_{1},x_{2} )= q\frac{(1-\sqrt{\varepsilon -    x_{2}})}{1-\sqrt{\varepsilon}} e^{x_{1} + \sqrt{\varepsilon    -x_{2}}- \sqrt{\varepsilon}} , \delta \leq \varepsilon < 1 , q\geq1\,.
 $$
 One can compute $v_{\delta}$-the smallest solution of the above equation satisfying the obstacle condition. 
 $$
  v_{\delta} = \frac{1-\sqrt{\delta    -x_{2}}}{1-\sqrt{\delta}} e^{x_{1}+\sqrt{\delta -x_{2}}  -\sqrt{\delta}}\,.
  $$
 This is not $B^d_{\delta}$! In fact, $B^d_{\delta}> v_{\delta}$. However, $v_{\delta}$  is {\bf the Bellman function for non-dyadic John--Nirenberg inequality}!!!
 The rest is in Vasyunin's lectures and in \cite{SVa}.
 
 \bigskip

\subsection{Burkholder-Bellman function}$$\forall I\in \mathcal{D}\,\, | \langle g\rangle_{I+} - \langle g\rangle_{I-}| \leq | \langle f\rangle_{I+} - \langle f\rangle_{I-} |$$
$$\Rightarrow \forall I\in\mathcal{D}\  \text{such that}\  |\langle g\rangle_{I} | \leq |\langle f\rangle_{I} |$$one has
$$\langle |g|^{p} \rangle_{I}\leq (p-1)^{p} \langle |f|^{p} \rangle_{I}\,\ p\geq 2\ .$$
The constant $(p-1)^{p}$ is sharp. This is a famous theorem of Burkholder which he proved by constructing the corresponding Bellman function. He found it by solving a corresponding Bellman PDE - a     complicated one. We would like to show a simple ``heuristic'' method of solution.

\bigskip

\centerline{\bf Who moves~?} 

$$x_{1} = \langle g \rangle_{J} \ ,\ x_{2} = \langle f\rangle_{J} \,\, x_{3} = \langle |f|^{p} \rangle_{J} \ .$$
Our rules say that $f(\al,x)=0$, $\mathbb{E} F(x_{1}^{t},x_{2}^{t}, \,x_{3}^{t})\thickapprox \mathbb{E}|g|^{p}$. Denoting by $\mathcal{F}_{t}$ the $\sigma$-algebra generated
by dyadic subintervals of $I$ of length $2^{-n} |I| , t = 2^{n}$, we can write $\mathbb{E} |g|^{p} \thickapprox \mathbb{E} |(\mathbb{E}x_{1} |\mathcal{F}_{t})|^{p} =\mathbb{E}|x_{1}^{t}|^{p}$ which gives us the correct bonus function $F(x_{1} ,x_{2} ,x_{3}) =|x_{1}|^{p}$.
Notice that $A =\{ \alpha = (\alpha_{1} ,\alpha_{2},\alpha_{3}) :|\alpha_{1} | \leq |\alpha_{2}| \}$ now.

This is because $|\alpha_{1}| =\frac{1}{2} | \langle g\rangle_{J+} -\langle g\rangle_{J-}| , |\alpha_{2}| =\frac{1}{2} | \langle f\rangle_{J+} - \langle f\rangle_{J-}|$, and we are given that the first quantity is always majorized by the second one. 

So we have the Bellman equation
$$
\mathop{\sup}\limits_{|\alpha_{1}|\leq |\alpha_{2}| \,,\, \alpha_{3}}\langle H_v\alpha ,\alpha \rangle = 0
$$
in $\Omega =\{ x: (x_{1},x_{2}, x_{3}) : |x_{2}|^{p} \leq x_{3}\}$(convex), with obstacle condition
$$
v(x_{1}, x_{2}, x_{3}) \geq |x_{1}|^{p}\,.
$$

{\bf Compare with} \eqref{HASBE}!

\bigskip

This example is interesting because we have a non-trivial set of restrictions $A$ for ``control"  $\al$.

Solutions were given by Burkholder \cite{Bu1} (see also \cite{Bu2}--\cite{Bu7}) and also (a different approach using Monge--Amp\`ere equation) can be found in \cite{VaVo}.
See also a very interesting review \cite{Ba}.

An interesting Bellman function built by the use of Monge--Amp\`ere equation can be also found in \cite{VaVoMA}, \cite{VaVoNotes}.

\section{The technique of laminates, Bellman function, and estimates of singular integrals from below}
\label{lam}

\noindent{\bf Definition.} Laminate  on $M^s_{2\times 2}$ is a positive finite measure on symmetric real matrices $M^s_{2\times 2}$ such that
\begin{equation}
\label{la1}
f(A) \ge \int f(A+M) \, d\nu(M)
\end{equation}
for all rank $1$ concave functions $f$.

\begin{theorem}
\label{JB}
Any laminate  on $M^s_{2\times 2}$ can be approximated weakly by the push forward of Lebesgue measure on the plane by the Hessian of smooth compactly supported functions, in other words,
 for {\bf any} good $F$
$$
\int F(M)\, d\nu(M)\approx \int_{\R^2} F (Du)\,dxdy\,,
$$
where $Du(x,y):=\begin{bmatrix} u_{xx} & u_{xy}\\u_{yx} & u_{yy}\end{bmatrix}$.
\end{theorem}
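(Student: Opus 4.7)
The plan is to reduce the statement to the case of \emph{prelaminates} by a standard density argument, then realise each prelaminate as the Hessian distribution of an explicit piecewise--smooth function built by iterating a one--dimensional lamination construction.

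Step 1 (reduction to prelaminates). A prelaminate is, by definition, a finite atomic measure obtained from a single Dirac mass $\delta_A$ by a finite sequence of rank--one splittings: whenever the current measure contains an atom $\mu\,\delta_B$ with $B=tB_1+(1-t)B_2$ and $B_2-B_1$ of rank one, we may replace it by $t\mu\,\delta_{B_1}+(1-t)\mu\,\delta_{B_2}$. It is a classical fact (Pedregal, Kirchheim, Matou\v{s}ek--Plech\'a\v{c}) that every laminate in the sense of \eqref{la1} is a weak-$*$ limit of prelaminates. Hence it suffices to prove the theorem assuming $\nu$ is a prelaminate.

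Step 2 (single rank--one split as a lamination). In $M^s_{2\times 2}$, symmetric rank--one matrices are of the form $\pm a\otimes a$, so a legal split is $B=tB_1+(1-t)B_2$ with $B_2-B_1=c\,a\otimes a$ for some $c\in\R$, $a\in\R^2$. Given a function $u$ with constant Hessian $B$ on an open set $U$, put
\[
v(x)=u(x)+\psi_\eps(a\cdot x),
\]
where $\psi_\eps$ is a piecewise--quadratic profile on $\R$, of period $\eps$, whose second derivative equals $B_1-B$ on a subset of relative density $1-t$ and $B_2-B$ on a subset of relative density $t$, connected by transition intervals of total length $O(\eps)$ per period. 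Then $D^2 v$ equals $B_1$ or $B_2$ on the corresponding strips in $U$, and the relative measures of these two sets in $U$ differ from $1-t$ and $t$ by $O(\eps)$.

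Step 3 (iteration and compact support). Start on a large square $Q$ from $u_0(x)=\tfrac12\langle Ax,x\rangle$, so $D^2u_0\equiv A$. Perform the splittings of the prelaminate one at a time; at the $k$-th stage apply Step 2 to each connected component of the region where the current Hessian equals the atom being split, using a new period $\eps_k\ll \eps_{k-1}$. After finitely many stages one has a Lipschitz function $u_N$ on $Q$ whose Hessian (a.e.) takes precisely the finitely many atoms of the prelaminate, with proportions matching those of $\nu$ up to error $\sum_k O(\eps_k)$. To produce a compactly supported function on $\R^2$, extend by the quadratic $\tfrac12\langle Ax,x\rangle$ outside $Q$ and then subtract a smooth cut--off of this quadratic in a corridor $Q'\setminus Q$ whose area is an arbitrarily small fraction of $|Q|$; finally mollify at a scale much finer than $\eps_N$. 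Normalising the pushforward of Lebesgue measure on $Q'$ (or testing against $F$ vanishing near the matrix $A$, to absorb the contribution of the corridor and of the cut--off), this produces the required approximation of $\nu$.

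The main obstacle is Step 3: after a few generations, the level set of each atom is a complicated union of thin strips of many orientations, and when a further rank--one split is applied to such a set one must choose the new period $\eps_k$ small enough that the total measure of all transition intervals and of the boundary layers near the old strip boundaries is negligible compared with the measure of the set being refined. The cleanest way to control this bookkeeping is to pick a single sequence $\eps_k\to 0$ decreasing fast enough that $\sum_k (\eps_k/\eps_{k-1})\cdot (\text{perimeter of level sets at stage } k-1) \to 0$; this is the step that makes the proof genuinely technical, everything else being essentially algebraic.
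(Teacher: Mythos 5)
The paper states Theorem~\ref{JB} without proof: it is invoked as a known result from the theory of laminates and gradient Young measures (in the tradition of Kirchheim, Pedregal, Matou\v{s}ek--Plech\'a\v{c}), so there is no ``paper's own proof'' to compare against. Your proposal reconstructs the standard argument behind it---reduction to prelaminates, a one--dimensional lamination for a single rank--one split, iteration at rapidly decreasing scales, then cut--off and mollification---and on the level of strategy it is the right one; you also correctly flag that the bookkeeping in the iteration is where the real work lies.

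One concrete slip in Step~2 must be corrected. With $B=tB_1+(1-t)B_2$ and the split $\mu\,\delta_B\mapsto t\mu\,\delta_{B_1}+(1-t)\mu\,\delta_{B_2}$, the set on which $D^2v=B_1$ (i.e.\ on which $\psi_\eps''$ corresponds to $B_1-B$) must have relative density $t$, not $1-t$. Writing $B_1-B_2=c\,a\otimes a$ gives $B_1-B=(1-t)c\,a\otimes a$ and $B_2-B=-tc\,a\otimes a$; taking $\psi_\eps''=(1-t)c$ on density $t$ and $\psi_\eps''=-tc$ on density $1-t$ makes $\int_{\text{period}}\psi_\eps''=0$, so $\psi_\eps'$ is periodic (hence bounded) and $\psi_\eps$ is uniformly $C^{1,1}$, and the push-forward carries the correct weights $t$ and $1-t$. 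With the densities swapped as you wrote them, the mean of $\psi_\eps''$ is $c(1-2t)\ne 0$, so $\psi_\eps'$ grows linearly, the uniform Lipschitz control is lost, and the resulting weights are wrong. With that index fixed, the remainder of the plan---choosing $\eps_k$ small relative to the perimeter of the stage-$(k-1)$ level sets, a cut-off corridor of asymptotically negligible relative area, and a final mollification at a scale finer than $\eps_N$---is the correct way to close the argument.
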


\noindent{\bf Observation.} Laminates supported by diagonal matrices $\begin{bmatrix} X& 0\\0 & Y\end{bmatrix}$are just exactly exactly the measures on $\R^2$ such that ($z=(X, Y)$)
\begin{equation}
\label{la2}
f(a)\ge \int_{\C} f(a+z)\, d \nu(z)
\end{equation}
for all {\bf bi-concave} (meaning separately concave in $X$ and $Y$) function $f$.

\bigskip

\noindent{\bf Definition.} $(\int X\,d\nu, \int Y\,d\nu)$ is called baricenter of a laminate $\nu$ supported on diagonal matrices.

\bigskip

Fix $p>2$ and $p_{\eta}=p+\eta, \eta>0$. Put 
$$
s_0:= 1- \frac2{p_{\eta}}\,,\, K:= \frac{p_\eta}{p_\eta-2}\,,\, p-\eta-1=\frac{K+1}{K-1}\,.
$$

We are going to construct very interesting laminates supported on
$$
Y= KX\,,\, Y+\frac1K X\,.
$$

\vspace{.2in}

Fix $p\ge 2$, fix small $\eta>0$, put
$$
p_\eta:=p+\eta\,,
$$
\begin{equation}
\label{pks0}
s_0:= 1-\frac{2}{p_\eta}, \, K:= \frac1{s_0}= \frac{p_\eta}{p_\eta-2},\, p_\eta=\frac{2K}{K-1},\, p_\eta-1=\frac{K+1}{K-1}
\end{equation}

We are going to present an interesting laminate with baricenter $(1,1)$ supported by lines
$$
L_K:\,\, Y=KX\,,\,\,\, L_{1/K}: \,\, Y=\frac1K X\,.
$$

Let $f$ be a bi-concave function and
\begin{equation}
\label{p}
f(z) = O(|z|^p),\, z\rightarrow\infty\,.
\end{equation}

Then concavity in horizontal variable gives

\begin{equation}
\label{c1}
f(t,t+h) \ge \frac{t-\frac1K (t+h)}{t+h-\frac1K (t+h)}f(t+h, t+h) +\frac{h}{t+h-\frac1K (t+h)} f(\frac1K( t+h), t+h)\,.
\end{equation}

Rewrite it as

\begin{equation}
\label{ch}
f(t+h,t+h) \le \frac{t+h-\frac1K (t+h)}{t-\frac1K (t+h)}f(t, t+h) -\frac{h}{t-\frac1K (t+h)} f(\frac1K( t+h), t+h)\,.
\end{equation}

The concavity in vertical variable gives

\begin{equation}
\label{cv}
f(t,t) \ge \frac{t-\frac1K t}{t-\frac1K t +h}f(t, t+h) +\frac{h}{t-\frac1K t +h} f(t, \frac1K t)\,.
\end{equation}

From \eqref{ch}, \eqref{cv} we obtain (of course we divide by $h$, and next, we will make $h$ tend to $0$)

$$
\frac{f(t+h, t+h)-f(t,t)}{h} \le \frac1h\bigg[\frac{t+h-\frac1K (t+h)}{t-\frac1K (t+h)}-1 +1 -\frac{t-\frac1K t}{t-\frac1K t +h}\bigg]f(t,t+h) -
$$
$$
\frac1{t-\frac1K(t+h)} f(\frac1K (t+h), t+h) - \frac1{t-\frac1K t +h} f(t,\frac1K t)\,.
$$
Make $h\rightarrow 0$. Then
\begin{equation}
\label{diffeq}
f'(t,t) -\frac{2K}{K-1} \frac{f(t,t)}{t} \le  -\frac{K}{K-1} \frac{f(\frac1K t,t)}{t}   -\frac{K}{K-1} \frac{f(t,\frac1K t)}{t} \,.
\end{equation}

We recall \eqref{pks0} and multiply by $1/t^{p_\eta}$. Notice that after that $LHS= \bigg(\frac{f(t,t)}{t^{p_\eta}}\bigg)'$. We integrate from $1$ to $\infty$ and use \eqref{p} to forget the term at infinity.
Then we obtain for any bi-concave function on the plane
\begin{equation}
\label{one}
-f(1,1) \le -\frac{K}{K-1} \int_1^\infty f(\frac1K t, t) \frac{dt}{t^{p_\eta+1}} -\frac{K}{K-1} \int_1^\infty f( t, \frac1K t)\frac{dt}{t^{p_\eta+1}}
\end{equation}

\bigskip

Introduce $\nu_{K,\eta}$:
$$
\int_{\R^2} \phi \,d\nu_{K,\eta} = \frac{K}{K-1}\int_1^\infty \phi(\frac1K t, t)\frac{dt}{t^{p+\eta+1}}\,.
$$
It is a laminate supported by $L_K: Y=KX$. And
introduce $\nu_{1/K,\eta}$:
$$
\int_{\R^2} \phi \,d\nu_{1/K,\eta} = \frac{K}{K-1}\int_1^\infty \phi( t, \frac1K t)\frac{dt}{t^{p+\eta+1}}\,.
$$
It is a laminate supported by $L_{1/K}: Y=\frac1K X$.
Now \eqref{one}
can be rewritten as 
\begin{equation}
\label{two}
f(1,1) \ge \int f\,(d\nu_{K,\eta}+ d\nu_{1/K,\eta})
\end{equation}
If all our concavity in getting \eqref{two} become linearities then we have equality in\eqref{two}. So
$d\nu_{K,\eta}+ d\nu_{1/K,\eta}$ is a laminate with baricenter $(1,1)$.

Consider a new laminate, now with baricenter $(0,0)$:

$$
\mu_{K,\eta} = \frac14(d\nu_{K,\eta}+ d\nu_{1/K,\eta}) +\frac14\delta_{(-1,1)} +\frac12\delta_{(0,1)}\,.
$$

Test it on 
$$
\phi_1(X,Y)= |X+Y|^p,\,\phi_2(X,Y)= |X-Y|^p\,.
$$
Then
\begin{equation}
\label{ratio}
\frac{\int\phi_1 d\mu_{K,\eta}}{\int\phi_2 d\mu_{K,\eta}}=\frac{\frac14 K((K+1)^p + (K+1)^p/K^p) \eta^{-1} +\frac12 (K-1)}{\frac14 K((K-1)^p + (K-1)^p/K^p) \eta^{-1} +\frac12 (K-1)+\frac14 2^p (K-1)}\,.
\end{equation}

Choosing $\eta>0$ very small we get

\begin{equation}
\label{ratio1}
\frac{\int\phi_1 d\mu_{K,\eta}}{\int\phi_2 d\mu_{K,\eta}} \ge \bigg(\frac{K+1}{K-1}\bigg)^p -C\eta\,.
\end{equation}

Notice that we can consider a bit different laminate than $\mu_{K,\eta}$, Namely let us push forward $\mu_{K,\eta}$ by the map $X\rightarrow X, Y\rightarrow -Y$. The new measure is called 
$\sigma_{K,\eta}$. Then \eqref{ratio1} transforms to

\begin{equation}
\label{ratio2}
\frac{\int\phi_2 d\sigma_{K,\eta}}{\int\phi_1 d\sigma_{K,\eta}} \ge \bigg(\frac{K+1}{K-1}\bigg)^p -C\eta\,.
\end{equation}

Now we use Theorem \ref{JB}. It implies that there exist smooth functions with compact support on the plane such that
\begin{equation}
\label{ratio3}
\frac{\int |u_{xx}-u_{yy}|^p\, d m_2}{\int |u_{xx}+u_{yy}|^p\,d m_2} \ge \bigg(\frac{K+1}{K-1}\bigg)^p -C\eta\,.
\end{equation}

Notice that $K$ depends on $\eta$ (see \eqref{pks0}) but
$$
\frac{K+1}{K-1}\rightarrow p-1, \,\eta\rightarrow 0\,.
$$
Thus from \eqref{ratio3} we get the estimate 
\begin{equation}
\label{belowoper1}
\|R_1^2-R_2^2\|_p \ge p-1\,,
\end{equation}
if $p\ge 2$.

\vspace{.2in}

This argument can be applied to some other interesting singular operators. Constant $p^*-1$ can be described as the {\bf smallest} constant $c=c_p$ such that
the function
$$
h_c(X,Y)=|Y+X|^p- c^p |Y-X|^p
$$
has a bi-concave majorant.

\noindent{\bf Definition.} Let us call $\varphi_p(X,Y)$ the smallest bi-concave majorant of $h_c(X,Y)=|Y+X|^p- c^p |Y-X|^p$ for the smallest (as we know) possible $c=c_p=p^*-1$.

\bigskip

We will recall a formula for $\varphi_p$ in the next section.

\bigskip

Now let us consider  {\it a different}   family (it is a perturbation of $h_c$):
$$
h_{c,\tau}:=|((Y+X)^2+\tau^2 (X-Y)^2)^{1/2}|^p- c^p |Y-X|^p\,.
$$

Here is a result proved in \cite{BJV}.

\begin{theorem}
\label{bjv1}
For sufficiently small universal $\tau_0>0$, any $p\in (1, \infty)$, and any $\tau\in [-\tau_0, \tau_0]$, the smallest $c$ for which there exists a bi-concave majorant of $h_{c, \tau}$ is
$c_p(\tau)=((p^*-1)^2 +\tau^2)^{1/2}$.
\end{theorem}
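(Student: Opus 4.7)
The plan is to establish the critical value $c_p(\tau)$ by matching a lower and an upper bound. For the lower bound $c\geq c_p(\tau)$, I would adapt the laminate construction of Section \ref{lam} essentially verbatim. Fix $p\geq 2$, set $p_\eta=p+\eta$, $K=p_\eta/(p_\eta-2)$, and form the same laminate $\mu_{K,\eta}$ with baricenter $(0,0)$ supported on the rays $L_K$, $L_{1/K}$ (plus the two atoms). On $L_K$, parametrized by $(X,Y)=(t/K,t)$, direct computation gives
\begin{equation*}
((X+Y)^2+\tau^2(X-Y)^2)^{p/2}=\frac{t^p}{K^p}\bigl[(K+1)^2+\tau^2(K-1)^2\bigr]^{p/2},\qquad |X-Y|^p=\frac{t^p(K-1)^p}{K^p},
\end{equation*}
with the symmetric formulas on $L_{1/K}$. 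A calculation analogous to \eqref{ratio} then yields
\begin{equation*}
\frac{\int ((X+Y)^2+\tau^2(X-Y)^2)^{p/2}\,d\mu_{K,\eta}}{\int |X-Y|^p\,d\mu_{K,\eta}}\;=\;\left(\Bigl(\frac{K+1}{K-1}\Bigr)^2+\tau^2\right)^{p/2}+O(\eta),
\end{equation*}
which tends to $((p^*-1)^2+\tau^2)^{p/2}=c_p(\tau)^p$ as $\eta\to 0$, using $(K+1)/(K-1)\to p^*-1$. Any bi-concave majorant $\Psi$ of $h_{c,\tau}$ with $\Psi(0,0)<\infty$ satisfies $\Psi(0,0)\geq \int h_{c,\tau}\,d\mu_{K,\eta}$, and the right side blows up like $1/\eta$ as $\eta\to 0$ whenever $c<c_p(\tau)$, giving a contradiction. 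The case $1<p<2$ is handled by duality.

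For the upper bound $c\leq c_p(\tau)$, I would construct an explicit bi-concave majorant of $h_{c_p(\tau),\tau}$ by perturbing Burkholder's function. A good sanity check is $p=2$: in this case $p^*-1=1$ and $c_p(\tau)^2=1+\tau^2$, so the barrier itself simplifies to
\begin{equation*}
h_{c_p(\tau),\tau}(X,Y)=(X+Y)^2+\tau^2(X-Y)^2-(1+\tau^2)(X-Y)^2=4XY,
\end{equation*}
which is bi-affine, hence trivially bi-concave; this already confirms the value $c_p(\tau)$ when $p=2$. For general $p\geq 2$, the natural ansatz replaces $|X+Y|$ in the explicit form of Burkholder's function $\Phi_p$ by the elliptic length $R_\tau:=((X+Y)^2+\tau^2(X-Y)^2)^{1/2}$ and $(p^*-1)$ by $c_p(\tau)$, with coefficients chosen so that the value and $R_\tau$-gradient of the candidate $\Psi_\tau$ agree with those of $h_{c_p(\tau),\tau}$ along the boundary locus $R_\tau=c_p(\tau)|X-Y|$. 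The Pythagorean identity $c_p(\tau)^2=(p^*-1)^2+\tau^2$ is precisely what makes this tangent matching possible and ensures $\Psi_\tau\geq h_{c_p(\tau),\tau}$ pointwise, by the same interpolation argument that validates Burkholder's $\Phi_p$.

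The main obstacle is verifying that $\Psi_\tau$ is bi-concave for $|\tau|\leq \tau_0$. Because $R_\tau$ is no longer affine in $(X,Y)$, the Hessian of $\Psi_\tau$ acquires extra curvature terms proportional to $\tau^2$ coming from the second derivatives of $R_\tau$. These corrections must be controlled by the strict concavity of $\Phi_p$ away from the saturated direction: recall from formula \eqref{d2fb} that $-d^2\Phi_p$ is a sum of manifestly non-positive terms providing slack in the non-critical directions. Along the zigzag-critical direction, however, Burkholder's concavity is marginal (an equality, as forced by the linearity of the ansatz \eqref{form}), so it is essential that the $\tau^2$-correction cancel exactly along this direction — this is the miraculous cancellation underlying the Pythagorean form of $c_p(\tau)$. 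The smallness constraint $\tau_0$ arises from the need to keep the $\tau^2$-perturbation in the transverse directions dominated by the strict concavity of $-d^2\Phi_p$ there, and the resulting bookkeeping — expanding $d^2\Psi_\tau$ via \eqref{d2fb} and re-gathering $\tau$-dependent terms into signed squares — is the technically delicate step of the proof, especially when one wants a single $\tau_0$ uniform in $p$ away from the degenerate endpoints.
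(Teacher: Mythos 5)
Your lower bound is sound: replicating the laminate $\mu_{K,\eta}$ of Section~\ref{lam} (with barycenter at the origin) and testing it on $((X+Y)^2+\tau^2(X-Y)^2)^{p/2}$ and $|X-Y|^p$ does give a ratio that tends to $\bigl(\bigl(\tfrac{K+1}{K-1}\bigr)^2+\tau^2\bigr)^{p/2}\to\bigl((p^*-1)^2+\tau^2\bigr)^{p/2}$, with the masses on the two rays blowing up like $1/\eta$; so for $c<c_p(\tau)$ any bi-concave $\Psi\ge h_{c,\tau}$ would have $\Psi(0,0)=+\infty$. That part of your argument is correct (though the paper itself only cites \cite{BJV} for this theorem, so there is no in-text proof to compare against).

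The upper bound, however, has a genuine gap, and it is not merely that the bi-concavity verification is ``delicate'' and left to the reader: the ansatz you propose is in fact \emph{not} bi-concave for any $\tau\neq0$, so the ``miraculous cancellation'' you describe does not occur. Write $R:=R_\tau$, $S:=|X-Y|$, $c:=c_p(\tau)$; your candidate in the critical region is $\Psi_\tau=\gamma(R-cS)(R+S)^{p-1}$ with $\gamma$ fixed by $C^1$ matching to $h_{c,\tau}$ along $R=cS$, i.e.\ $\gamma=p\bigl(c/(c+1)\bigr)^{p-1}$. Two concrete failures. (i) Near the diagonal $X=Y$: as $S\to0$ one has $R_X+S_X=O(S)$ but $\partial_X^2 R=4\tau^2Y^2/R^3$ does not vanish, and $\Psi_{XX}\to \gamma\,p\,\tau^2(X+Y)^{p-2}>0$; so $\Psi_\tau$ is strictly convex in $X$ in a neighbourhood of the diagonal for every $p>1$ and every $\tau\neq0$. (There is a concave kink \emph{on} the diagonal, but the positive second derivative off it already destroys bi-concavity.) (ii) Even on the matching boundary, take $p=2$, $X=0$, $Y=1$: there $R=cS$, $\gamma=2c/(c+1)$, and a direct computation gives
\begin{equation*}
\Psi_{XX}(0,1)=\frac{8(c-1)}{c^2(c+1)}\;>\;0\qquad\text{for }c=\sqrt{1+\tau^2}>1.
\end{equation*}
Your $p=2$ sanity check actually shows why: for $p=2$ the true least majorant is the bi-affine $h_{c_p(\tau),\tau}=4XY=(R-cS)(R+cS)$ itself, whereas your ansatz $\gamma(R-cS)(R+S)$ differs from it by $\tfrac{c-1}{c+1}(R-cS)^2\ge0$, which is exactly the spurious convexity; the ``replace $|X+Y|\to R_\tau$, $(p^*-1)\to c_p(\tau)$'' recipe does not recover the correct function even at $p=2$. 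The positive curvature injected by $\partial_X^2R_\tau$ is not cancelled along the zigzag-critical direction by the Pythagorean choice of $c$; the correct construction of a bi-concave majorant (as in \cite{BJV}) must be of a genuinely different form, not a verbatim substitution into Burkholder's formula. Your duality claim for $1<p<2$ is also unsupported: the bi-concave-majorant problem for $h_{c,\tau}$ is not self-dual under $p\leftrightarrow p'$ in any obvious way (the homogeneities $p$ and $p'$ differ), so the sub-$2$ case needs its own construction with the roles of the two variables swapped, parallel to the $p<2$ branch of Burkholder's $\varphi_p$.
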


Using the same considerations with laminates as above (especially Theorem \ref{JB}) we can prove the following estimate from below for ``quantum'' linear combination of secon order Riesz transforms:

\begin{theorem}
\label{bjv2}
For sufficiently small $\tau$ and any small positive $\epsilon$ one can find $g\in L^p(m_2)$ such that
$$
\|(|(R_1^2-R_2^2)g|^2 +\tau^2|(R_1^2+R_2^2)g|^2)^{1/2}\|_p \ge ((p^*-1)^2 +\tau^2)^{1/2}\|g\|_p-\epsilon\,.
$$
\end{theorem}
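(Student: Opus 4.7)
The plan is to mimic closely the argument that produced \eqref{belowoper1} in the preceding subsection, but to test the same diamond laminate $\sigma_{K,\eta}$ against the $\tau$-perturbed observable $F(X,Y):=\bigl((X-Y)^2+\tau^2(X+Y)^2\bigr)^{p/2}$ in place of $|X-Y|^p$. First I would recast the statement in terms of smooth potentials. For $u\in C_0^\infty(\R^2)$ set $g:=\Delta u$; with the conventions of the paper (the symbol of $R_j^2$ is $\xi_j^2/|\xi|^2$) one has $(R_1^2+R_2^2)g=g=u_{xx}+u_{yy}$ and $(R_1^2-R_2^2)g=u_{xx}-u_{yy}$. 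The target inequality thereby reduces to: for every $\epsilon'>0$ there exists $u\in C_0^\infty$ with
\begin{equation*}
\int_{\R^2}\bigl((u_{xx}-u_{yy})^2+\tau^2(u_{xx}+u_{yy})^2\bigr)^{p/2}\,dm_2\ge \bigl((p^*-1)^2+\tau^2\bigr)^{p/2}\int_{\R^2}|u_{xx}+u_{yy}|^p\,dm_2-\epsilon'.
\end{equation*}

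By Theorem \ref{JB} it suffices to exhibit a laminate $\nu$ on $M^s_{2\times2}$ (in fact on the diagonal matrices, identified with $\R^2$) for which the analogous inequality holds with $F$ tested against $G(X,Y):=|X+Y|^p$. I would take $\nu=\sigma_{K,\eta}$, the very laminate built above in the proof of \eqref{belowoper1}, with $p_\eta=p+\eta$ and $K=p_\eta/(p_\eta-2)$. A direct computation shows that on each ray component of $\sigma_{K,\eta}$ (supported on $Y=-KX$ and $Y=-X/K$ and parametrized by $t\ge 1$ with density $\frac{K}{K-1}\,t^{-p_\eta-1}\,dt$) one has the pointwise identity
\begin{equation*}
\frac{F(X,Y)}{G(X,Y)}=\left(\left(\frac{K+1}{K-1}\right)^2+\tau^2\right)^{p/2}.
\end{equation*}
The ray contributions are of order $1/\eta$, while the two atomic pieces $\frac14\delta_{(-1,-1)}$ and $\frac12\delta_{(0,-1)}$ contribute only $O(1)$. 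Hence as $\eta\to 0^+$, using \eqref{pks0} in the form $(K+1)/(K-1)=p_\eta-1\to p-1=p^*-1$ (valid for $p\ge 2$), one obtains $\int F\,d\sigma_{K,\eta}/\int G\,d\sigma_{K,\eta}\to\bigl((p^*-1)^2+\tau^2\bigr)^{p/2}$. Applying Theorem \ref{JB} to convert this laminate identity into a statement on smooth compactly supported $u$ and taking $p$-th roots yields the required $g=\Delta u$. The complementary range $1<p<2$ follows by $L^p$--$L^{p'}$ duality, since each $R_j^2$ is self-adjoint and the quadratic form built from $R_1^2\pm R_2^2$ passes under duality without loss.

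The main obstacle is conceptual rather than computational: one has to recognize that $\sigma_{K,\eta}$, originally designed to be extremal for the ``un-quantized'' pair $(|X-Y|^p,\,|X+Y|^p)$, remains extremal for the ``quantum'' pair $(F,G)$ above. This is precisely the content of Theorem \ref{bjv1}: its sharp critical constant $\bigl((p^*-1)^2+\tau^2\bigr)^{1/2}$ for the bi-concave-majorant problem of $h_{c,\tau}$ guarantees that no laminate can produce a ratio exceeding this value, and the explicit computation above certifies that $\sigma_{K,\eta}$ saturates the bound in the $\eta\to 0$ limit. The smallness hypothesis on $\tau$ enters only through Theorem \ref{bjv1}, which is formulated for $|\tau|\le\tau_0$, in order to ensure that $\bigl((p^*-1)^2+\tau^2\bigr)^{1/2}$ is genuinely optimal rather than merely an obvious lower bound for the critical constant.
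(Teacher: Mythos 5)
Your computation for $p\ge 2$ is correct and follows exactly the route the paper indicates: reduce to potentials via $g=\Delta u$, apply Theorem~\ref{JB}, and test the laminate $\sigma_{K,\eta}$ against the perturbed observable. The pointwise identity $F/G=\bigl(((K+1)/(K-1))^2+\tau^2\bigr)^{p/2}$ on both rays $Y=-KX$ and $Y=-X/K$ checks out (on $Y=-KX$ one has $X-Y=(1+K)X$, $X+Y=(1-K)X$, so the ratio is $\bigl((1+K)^2+\tau^2(1-K)^2\bigr)^{p/2}/(K-1)^p$, and similarly on the other ray), the ray integrals are indeed $O(1/\eta)$ while the atoms at $(-1,-1)$ and $(0,-1)$ are $O(1)$, and $(K+1)/(K-1)=p_\eta-1\to p-1=p^*-1$ as $\eta\to0$. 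This is the argument the paper gestures at.

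The gap is in the sentence dispatching $1<p<2$ by duality. The quantity you are bounding from below is the norm of the vector-valued map
$T:L^p\to L^p(\ell^2)$, $Tg=\bigl((R_1^2-R_2^2)g,\,\tau(R_1^2+R_2^2)g\bigr)$. Its adjoint is $T^*:L^{p'}(\ell^2)\to L^{p'}$, $T^*(h_1,h_2)=(R_1^2-R_2^2)h_1+\tau(R_1^2+R_2^2)h_2$. Although $\|T\|_{L^p\to L^p(\ell^2)}=\|T^*\|_{L^{p'}(\ell^2)\to L^{p'}}$, the operator $T^*$ acts on \emph{pairs} of functions and is not the $p'$-version of $T$; in particular a lower bound of the form $\|T^*\|\ge c$ does not follow from knowing $\|T\|_{L^{p'}\to L^{p'}(\ell^2)}\ge c$. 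Self-adjointness of each $R_j^2$ does not repair this because the $\ell^2$-structure is not preserved under passing to the adjoint of the column operator. So the case $1<p<2$ requires a separate laminate, built on the $p'$-side of Burkholder's function (that is, mimicking the $1<p<2$ branch of $\varphi_p$ from Section~\ref{Bu}), rather than a formal duality. This is also where the hypothesis ``$\tau$ sufficiently small'' actually does work --- for $p\ge2$ your laminate certification is valid for every $\tau$, and it is only the $p<2$ range (cf.\ the ``phase transition'' remark in the paper) that imposes the restriction; your final paragraph attributes the smallness of $\tau$ to Theorem~\ref{bjv1} being about optimality, which is not where it is needed for the lower bound.
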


This gives rise to the following problem:

\noindent{\bf Problem.}
\label{bjv3}
For sufficiently small $\tau$ 
$$
\|\begin{bmatrix} R_1^2-R_2^2 \\ \tau \,I\end{bmatrix}: L^p(m_2)\rightarrow L^p(\R^2,l^2)\|= ((p^*-1)^2 +\tau^2)^{1/2}\,?
$$

The answer is affirmative, see \cite{BJV}. Notice that for $p\in (1,2)$ and large $\tau$ this is no longer true. Somewhere we have a ``phase transition" of the sharp constant. It is not clear what is the critical $\tau(p)$.

\subsection{``Explanation'' of laminates above via Burkholder's function $\varphi_p(X,Y)$ and its properties}
\label{Bu}

We introduce  coordinates $(x,y)$:
$$
Y= y+x,\, X=y-x\,.
$$
Let
$$
\gamma_p= p(1-\frac1{p^*})^{p-1}\,.
$$
In the first and second quadrants of $xy$, Burkholder's function in these coordinates is equal to (here the reader should glance at \eqref{pks0} and make $\eta=0$ in it, $s_0$ and $k$ below are as in \eqref{pks0}, but with $\eta=0$)
\begin{equation}
\label{varphi}
\varphi_p(x,y) :=\begin{cases}\gamma_p (y-(p^*-1)|x|)(|x|+y)^{p-1}\,,\,\, \text{if}\,\, \frac{y-|x|}{y+|x|}\ge s_0:= 1-\frac2{p} =\frac1k\\
y^p-(p^*-1)^p |x|^p\,,\,\, \text{if}\,\,-1\le \frac{y-|x|}{y+|x|}\le s_0\,.\end{cases}
\end{equation}
Now {\bf extend} $\varphi_p(x,y)$ to the whole plane by
$$
\varphi_p(x,y)=\varphi_p(-x,-y)\,.
$$

Burkholder proved \cite{Bu1}
\begin{theorem}
\label{bu1}
Such a function coincides with the smallest majorant of $h_c(x,y)= |y|^p-c^p|x|^p,\, c=p^*-1$ bi-convex in $X,Y$ coordinates.
For $c\in [0, p^*-1)$ there is no such bi-concave majorant of $h_c$.
\end{theorem}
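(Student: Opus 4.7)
The plan is to recognise that Theorem \ref{bu1} is essentially a reformulation, in the rotated coordinates $(X,Y) = (y-x,\, y+x)$, of material already established in Lemma \ref{zz} and the proof of Theorem \ref{phi}. Fixing $Y$ (resp.\ $X$) corresponds to moving along $dy = -dx$ (resp.\ $dy = dx$) in $(x,y)$-space, so bi-concavity in $(X,Y)$ is exactly the zigzag concavity defined just before Theorem \ref{phi}. Under this dictionary, the function $\varphi_p$ displayed in \eqref{varphi}, once extended by $\varphi_p(x,y) = \varphi_p(-x,-y)$, is precisely the function $\Phi_0$ built in Lemma \ref{zz}.

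First I would verify that $\varphi_p$ is zigzag concave. Writing $\varphi_p(x,y) = (|x|+y)^p g(s)$ with $s = (y-|x|)/(y+|x|)$, the computations preceding Lemma \ref{zz} reduce zigzag concavity to the pair $g''(s) \leq 0$ and $s^2 g''(s) + (p-1)(-2s g'(s) + p\,g(s)) \leq 0$. On the linear piece $s \in [s_0,1]$, the function $g$ is affine, so the first inequality is an equality and the second reduces to $2s g'(s) - p\, g(s) \geq 0$, which holds by the choice $\rho = p^*-1$. On the piece $s \in [-1,s_0]$, where $\varphi_p$ coincides with $h_{c_p}(x,y) = y^p - (p^*-1)^p|x|^p$, both inequalities follow from the inequality \eqref{also} used in Lemma \ref{zz}. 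The $C^1$ matching across the interface $s = s_0$ is exactly the tangency condition $H'(s_p) = g'(s_p)$ that pins down $\gamma_p$, and this matching is enough to propagate zigzag concavity globally. The majorisation $\varphi_p \geq h_{c_p}$ is an equality on $\{s \leq s_0\}$ and reduces on $\{s \geq s_0\}$ to a one-variable check: the difference vanishes to first order at $s = s_0$ and has the right sign thereafter by the choice of $\gamma_p$.

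For minimality I would reuse the second half of the proof of Theorem \ref{phi} almost verbatim. Given any zigzag concave majorant $\psi$ of $h_c$ (for any $c \geq 0$), set
\begin{equation*}
\Psi(x,y,z) := \psi(x,y) + c^p z,\qquad (x,y,z) \in \Omega := \{z \geq |x|^p\}.
\end{equation*}
Then $\Psi$ inherits the unison and antiunison inequalities \eqref{unison}--\eqref{antiunison}, and on the boundary $\{z = |x|^p\}$ one has $\Psi(x,y,z) \geq h_c(x,y) + c^p|x|^p = |y|^p$. The inductive dyadic Bellman argument flagged as the ``nontrivial exercise'' right after \eqref{antiunison} then forces $\Psi \geq B$, where $B$ is Burkholder's function. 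Specialising to $c = p^*-1$ and taking the supremum over $z \geq |x|^p$ yields $\psi(x,y) \geq \sup_z[B(x,y,z) - (p^*-1)^p z] = \varphi_p(x,y)$, which is the minimality claim.

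The non-existence statement for $c \in [0, p^*-1)$ then drops out of the same inequality applied at the origin. If a finite bi-concave majorant $\psi$ of $h_c$ existed, then by Theorem \ref{001} and the homogeneity $B(0,0,z) = z\cdot B(0,0,1) = (p^*-1)^p z$ one would have
\begin{equation*}
\psi(0,0) \geq \sup_{z \geq 0}\bigl[B(0,0,z) - c^p z\bigr] = \sup_{z \geq 0} z\bigl((p^*-1)^p - c^p\bigr) = +\infty,
\end{equation*}
a contradiction. The main potential obstacle is the ``nontrivial exercise'' $\Psi \geq B$ used in the minimality step; one carries it out by picking an almost-extremal pair $(f,g)$ for $B(x,y,z)$ on a fixed dyadic interval, iterating the zigzag inequalities for $\Psi$ down the dyadic tree, and passing to the limit exactly as in the shortcut proof of Theorem \ref{Buth}.
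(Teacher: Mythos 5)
Your coordinate dictionary is right: fixing $X=y-x$ (resp.\ $Y=y+x$) means moving along $dy=dx$ (resp.\ $dy=-dx$), so bi-concavity in $(X,Y)$ is exactly the zigzag concavity of Section \ref{prbu1}, and the extension $\varphi_p(x,y)=\varphi_p(-x,-y)$ combined with the $|x|$-dependence in the upper half-plane makes $\varphi_p$ coincide with $\Phi_0$ from Lemma \ref{zz}. The verification that $\Phi_0$ is zigzag concave and majorizes $h_{c_p}$ via $g''\le 0$, inequality \eqref{2conv}, inequality \eqref{also}, and the $C^1$ matching $H'(s_p)=g'(s_p)$ is a faithful reading of the paper.

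The minimality step, however, has a real gap. What your $\Psi:=\psi+c_p z\ge B$ argument shows is that every bi-concave majorant $\psi$ of $h_{c_p}$ satisfies $\psi\ge\f:=\sup_{z\ge|x|^p}[B(x,y,z)-c_p z]$, which is precisely the conclusion of Theorem \ref{phi}. Since $\varphi_p$ is itself a bi-concave majorant, this only gives $\varphi_p\ge\f$; to conclude that $\varphi_p$ \emph{is} the least majorant you need the reverse inequality $\varphi_p\le\f$, which you write as the unproved equality $\sup_z[B(x,y,z)-(p^*-1)^p z]=\varphi_p(x,y)$. That reverse direction asks for $B(x,y,z)\ge\varphi_p(x,y)+(p^*-1)^p z$ at some $z\ge|x|^p$, i.e.\ for the construction of near-extremal pairs $(f,g)$ realising $\varphi_p$; Section \ref{prbu1} never does this (it only constructs $\Phi_0$, $\Phi$ as \emph{some} majorants to get the upper bound in Theorem \ref{Buth}), and the paper attributes the identification to Burkholder \cite{Bu1}. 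So either the equality must be cited, or one must supply the extremizers.

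Your non-existence argument for $c\in[0,p^*-1)$, on the other hand, is correct and actually sharpens what the paper proves: Lemma \ref{zz} only rules out \emph{homogeneous} zigzag concave majorants, whereas running $\Psi\ge B$ with a general bi-concave $\psi$ and evaluating at $(0,0,z)$, $z\to\infty$, using Theorem \ref{001} and the homogeneity of $B$, yields non-existence with no homogeneity assumption on $\psi$ — which is what Theorem \ref{bu1} actually claims. This is a cleaner route than the paper's Lemma \ref{zz}, though of course it still leans on $B(0,0,1)=(p^*-1)^p$, which the paper takes from Burkholder's formula without independent proof.
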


\bigskip

\noindent {\bf Observation 2}. We use here both coordinates $(X,Y)$ and $(x,y)$. In the cone $X\le Y\le KX$ function $\varphi_p$ is linear along $Y=const$ segments. Similarly,
In the cone $\frac1K X\le Y\le X$ function $\varphi_p$ is linear along $X=const$ segments.

This linearity allows to calculate (we are in $(X,Y)$ now)
$$
\varphi_p(t+h, t+h) -\varphi_p(t,t)
$$
virtually without any loss if we use the $T$-shape $4$-tuple of points in $\R^2$: $( (\frac1K (t+h), t+h), (t, t+h), (t+h, t+h), (t,\frac1K t))$ as in Section \ref{lam}.

If we move one of the lines $L_K, L_{1/K}$  then two things may happen: 1) we go outside of these linearity cones, and subsequently we get strict inequality for $\varphi_p(1,1)$,
or 2) if we do not go outside of linearity cones, but then we loose 
$$
\varphi_p= h_{p^*-1}
$$ 
equality because by the definition of of $\varphi_p$ (see \eqref{varphi}) this equality holds only on the boundary of and {\it outside} of the union of linearity cones.

Notice also that on these lines $L_K, L_{1/K}$ (recall that $K=\frac{p+\eta}{p+\eta-2}$ if $p\ge 2$) we have that
$$
\varphi_p\approx h_{p^*-1}\approx 0\,,\, \varphi_p\ge h_{p^*-1}\,.
$$

For $c$ larger than $p^*-1$ we can again choose the lines where $h_c$ coincides with its bi-convex majorant, but then they will be quite negative there and integration of $h_c$ along a laminate supported on such lines cannot be almost positive as it was the case above.

\section{Stochastic Calculus and $1/2$ quasiconvexity}
\label{qc12}

We have a bijection of matrices $M=\begin{bmatrix}a,&b\\c,&d\end{bmatrix}$ onto $(z,w)$: $z=a+d +i(b-c),\, w= a-d +i(b+c)$. Notice that $2\,\det M= |z|^2-|w|^2$.

Recall that Sverak's function is the following ``simple'' object
$$
S(z,w):=\begin{cases} |z|^2-|w|^2, & |z|+|w|\le 1\\ 2|z|-1,\,\,\text{otherwise}\end{cases}
$$

Function

$$
\psi_p(z,w):=  ((p-1)|z|-|w|)(|z|+|w|)^{p-1},\, p\ge 2,
$$
can be easily obtained from $S$ using the idea of Iwaniec, see e.g. \cite{BMS}. The process is a certain averaging. Therefore, the fact  that $S$ is rank-$1$ convex implies that  $\psi_p$ is also 
rank-$1$ convex. 

To solve the Big Iwaniec problem of the previous sections it would be enough that any of these functions is quasiconvex at zero matrix. This is an outstanding and very difficult problem.

On the other hand we can formulate two problems which seem to be easier and may be readily reachable by Stochastic Calculus methods:

\noindent{\bf Problem.}
\label{S12}
Prove that $S(z, \frac12 w)$ is a quasiconvex function at zero matrix.

At least we feel that the following problem is directly reachable by methods of Stochastic Calculus:

\noindent{\bf Problem.}
\label{psi12}
Prove that $\psi_p(z, \frac12 w), p\ge 2,$ is a quasiconvex  function at zero matrix.

See interesting results in recent paper \cite{AIPS}.

\end{document}